\definecolor{kerstin}{RGB}{25,180,60}
\newcommand{\re}{\mathbb{R}}
\newcommand{\na}{\mathbb{N}}
\newcommand{\eps}{\varepsilon}
\newcommand{\diver}{\operatorname{div}}
\newcommand{\lz}{{L^2(D)}}
\newcommand{\HS}{\text{HS}}
\newcommand{\sobolevp}{W^{1,p}_0(D)}
\newcommand{\erww}[1]{\mathbb{E}\left[#1\right]}
\newcommand{\halbe}{\frac{1}{2}}
\newcommand{\ranglev}{\rangle_{q',q}}
\newcommand{\supt}{\sup_{t\in[0,T]}}
\newcommand{\knorm}{\|k\|_{L^2(D\times D)}}
\newcommand{\erwws}[1]{\mathbb{E}'\left[#1\right]}
\newcommand{\mP}{\mathbb{P}}
\newcommand{\tiT}{{t\in[0,T]}}
\newcommand{\sobolevmq}{W^{m,q}_0(D)}
\newcommand{\sobolevmqs}{W^{-m,q'}(D)}
\newcommand{\mY}{\mathcal{Y}}
\newcommand{\mW}{\mathcal{W}}
\newcommand{\vnd}{v_n}
\newcommand{\mF}{\widetilde{\mathcal{F}}}
\newcommand{\mFt}{(\widetilde{\mathcal{F}}_t)_{t\in[0,T]}}
\newtheorem{defi}{Definition}[section]
\newtheorem{lem}[defi]{Lemma}
\newtheorem{teo}[defi]{Theorem}
\newtheorem{prop}[defi]{Proposition}
\newtheorem{cor}[defi]{Corollary}
\newtheorem{remark}[defi]{Remark}
\newenvironment{proof}{\noindent{\textit{Proof.}}}{\hfill$\square$}
\newenvironment{notation}{\noindent{\textit{Notation.}}}
\numberwithin{equation}{section}
\begin{document}
\title{Well-posedness of stochastic evolution equations with Hölder continuous noise}

\author{Kerstin Schmitz\footnotemark[1], \and Aleksandra Zimmermann\footnotemark[1]}
\date{}

\footnotetext[1]{TU Clausthal, Institut f\"ur Mathematik, Clausthal-Zellerfeld, Germany, kerstin.schmitz@tu-clausthal.de,
aleksandra.zimmermann@tu-clausthal.de}

\maketitle

\begin{abstract}
We show existence and pathwise uniqueness of probabilistically strong solutions to a pseudomonotone stochastic evolution problem on a bounded domain $D\subseteq\re^d$, $d\in\na$, with homogeneous Dirichlet boundary conditions and random initial data $u_0\in L^2(\Omega;L^2(D))$. The main novelty is the presence of a merely H{\"o}lder continuous multiplicative noise term. In order to show the well-posedness, we simultaneously regularize the H{\"o}lder noise term by inf-convolution and add a perturbation by a higher order operator to the equation. Using a stochastic compactness argument we may pass to the limit and we obtain first a martingale solution. Then by a pathwise uniqueness argument we get existence of a probabilistically strong solution.
\end{abstract}

\textbf{Keywords:} Stochastic partial differential equations $\bullet$ Pseudomonotone stochastic evolution equations $\bullet$ Well-posedness $\bullet$ Non-Lipschitz H\"older multiplicative noise $\bullet$ Probabilistically strong solutions\\

\textbf{Mathematics Subject Classification (2020):} 60H15 $\bullet$ 35R60

\tableofcontents

\section{Introduction}

\subsection{Statement of the problem, motivation, and former results}

Let $T>0$, $D\subseteq\re^d,\ d\in\na,$ be a bounded domain, and $(\Omega,\mathcal{A},\mathbb{P})$ a probability space endowed with a right-continuous, complete filtration $(\mathcal{F}_t)_{0\leq t\leq T}$.
We want to show existence and uniqueness of strong solutions to stochastic $p$-Laplace evolution equations of the form
\begin{align}\label{spde}
\begin{aligned}
du-\diver a(x,u,\nabla u)\, dt+f(u)\,dt&=B(t,u)\,dW_t &&\text{in }\Omega\times[0,T]\times D\\
u &= 0 &&\text{on }\Omega\times[0,T]\times\partial D\\
u(0,\cdot)&= u_0 &&\text{in }\Omega\times D,
\end{aligned}
\end{align}
where $u_0$ is assumed to be in $L^{2}(\Omega;\lz)$ and $\mathcal{F}_0$-measurable.
We fix a separable Hilbert space $U$ such that $U\supseteq L^2(D)$ and a symmetric, non-negative trace class operator $Q:U\rightarrow U$ with $Q^\halbe(U)=L^2(D)$. We endow $U$ with an orthonormal basis of eigenvectors of $Q$. In the following let $(W_t)_{0\leq t \leq T}$ be a $(\mathcal{F}_t)_{0\leq t\leq T}$-adapted $Q$-Wiener process with values in $U$.
The integral on the right-hand side of \eqref{spde} is understood in the sense of Itô.
The function $f\in L^\infty(\re)$ is assumed to be Lipschitz continuous with Lipschitz constant $L_f\geq0$ and satisfies $f(0)=0$.\\
For $a:D\times\re\times\re^d\rightarrow\re^d$ we assume, that it is a Carathéodory function and satisfies for $\max\{1,\frac{2d}{d+2}\}<p<\infty$ the usual Leray-Lions conditions which will be specified in Section~\ref{Sec_Hypo}.

Our aim is to show existence and pathwise uniqueness of probabilistically strong solutions to \eqref{spde}. The classical monotonicity method to show well-posendess is originated in \cite{L69} for deterministic equations. This method was extended to stochastic partial differential equations by Pardoux (see \cite{P75}) and was generalized by Krylov and Rozovskii (see \cite{KR07}) and Liu and Röckner in \cite{LR15}.
Key properties for these well-posedness results are certain monotonicity, coercivity and growth conditions of the (locally) monotone operator in combination with the noise term. These assumptions have been applied and extended by many authors (see, e.g., \cite{GRZ09,LR10,L11,L13,LR13,BDR16,GHV22}).\\
The main task we want to tackle in this study is the presence of a pseudomonotone operator and a merely H\"older continuous multiplicative noise term. Precisely, we assume that the operator $B:(0,T)\times\lz\rightarrow\HS(\lz)$  is Hölder continuous but not necessarily Lipschitz continuous in its second variable, where $\HS(\lz)$ denotes the space of Hilbert-Schmidt operators from $\lz$ to $\lz$.\\
There are many results that address linear SPDEs with Hölder continuous noise or, more generally, nonlinear SPDEs with Lipschitz noise and Hölder continuous coefficients in the literature. Let us mention the results on existence and uniqueness of solutions to stochastic Volterra equations with non-Lipschitz coefficients and on stochastic evolution equations with non-Lipschitz coefficients in \cite{W08} and \cite{Z09}.
Existence of mild solutions to the stochastic heat equation with Hölder diffusion coefficients is well-known and has been studied in (see \cite{MMP14, MP11, MPS06, S94}).
The question of uniqueness of solutions to the stochastic heat equation with non-Lipschitz diffusion coefficient and space-time white noise as well as colored noise was studied in \cite{MP11,MMP14,MN15}.
In these contributions, a semigroup approach is available and allows to use the framework of mild solutions. Motivated by this results, our aim is to study existence and uniqueness for evolution equations driven by nonlinear pseudomonotone operators and non-Lipschitz multiplicative noise in the variational framework. \\
In the recent contribution \cite{RSZ22}, well-posedness of SPDEs driven by multiplicative noise with fully local monotone coefficients has been considered. The authors use Galerkin approximations for the proof of existence of probabilistically weak solutions and a refined $L^2$-technique for the proof of pathwise uniqueness.
The results in our contribution differ twofold from the results in \cite{RSZ22}. Firstly, we use different techniques, namely the simultaneous perturbation with a higher-order operator and regularization by inf-convolution in the noise. Secondly, our operator is rather pseudomonotone than locally monotone and may therefore not satisfy the local monotonicity conditions from \cite{RSZ22}, see Remark \ref{240215_01} for more details.

To show existence of strong solutions to \eqref{spde} we approximate the non-Lipschitz operator $B$ by a Lipschitz-continuous operator. In addition, we adapt the ideas in \cite{STVZ23} and add a singular perturbation in form of a higher order operator to the equation. This enables us to apply the well-posedness result stated in \cite{LR15} to get a variational solution to the approximated equation.
To obtain then a martingale solution to \eqref{spde} we use a stochastic compactness argument of Prokhorov and Skrorokhod which is classical in the framework of SPDEs and has been used in, e.g., \cite{DG22,DKL23,BNSZ23, FG95,DGT11,BBNP14,BBNP14book,DHV16,SWZ19,VZ19,BV19,VZ21,RSZ22}, see also \cite{BFH18} for a more extensive list of references. 
Existence of a probabilistically strong solution to \eqref{spde} follows from a pathwise uniqueness argument of Gyöngy and Krylov (see \cite{GK96}). 

\subsection{Hypotheses}\label{Sec_Hypo}

For $a:D\times\re\times\re^d\rightarrow\re^d$ we assume, that it is a Carathéodory function, i.e. $D\ni x\mapsto a(x,\lambda,\xi)$ is  measurable for all $(\lambda,\xi)\in\re\times\re^d$ and $\re\times\re^d\ni (\lambda,\xi)\mapsto a(x,\lambda,\xi)$ is continuous for a.e. $x\in D$. Moreover $a$ satisfies for $\max\{1,\frac{2d}{d+2}\}<p<\infty$
\begin{itemize}
\item[(A1)] $(a(x,\lambda,\xi)-a(x,\lambda,\eta))(\xi-\eta)\geq 0$ for all $\xi,\eta\in\re^d,\lambda\in\re$ and a.e. $x\in D$
\item[(A2)] There exist $\kappa\in L^1(D$), constants $C_1>0,\ C_2,C_3,C_4\geq0$, $1\leq\nu<p$, and a non-negative function $g\in L^{p'}(D)$ such that for all $(\lambda,\xi)\in\re\times\re^d$ and a.e. $x\in D$ there holds
\begin{align*}
a(x,\lambda,\xi)\cdot\xi\geq\kappa(x)+C_1|\xi|^p-C_2|\lambda|^\nu
\end{align*}
and
\begin{align*}
|a(x,\lambda,\xi)|\leq C_3|\xi|^{p-1}+C_4|\lambda|^{p-1}+g(x).
\end{align*}
\item[(A3)] There exist a constant $C_5\geq0$ and a non-negative function $h\in L^{p'}(D),$ such that for all $\lambda_1,\lambda_2\in\re,\xi\in\re^d$ and a.e. $x\in D$ there holds
\begin{align*}
|a(x,\lambda_1,\xi)-a(x,\lambda_2,\xi)|\leq(C_5|\xi|^{p-1}+h(x))|\lambda_1-\lambda_2|.
\end{align*}
\end{itemize}

\begin{remark}\label{240215_01}
For $p\geq 2$, an operator induced by a Carath\'{e}odory function satisfying (A1)-(A3) is a slight generalization of the operator 
\[\mathcal{A}:W^{1,p}_0(D)\rightarrow W^{-1,p'}(D), \ u\mapsto \mathcal{A}(u)=-\Delta_p(u)+\diver F(u),\] 
where $\Delta_p(u)=\diver(|\nabla u|^{p-2}\nabla u)$ and $F:\re\rightarrow\re^d$ is Lipschitz continuous with $F(0)=0$.
\end{remark}

For $\sigma:(0,T)\times \mathbb{R}\rightarrow \mathbb{R}$ we assume that
\begin{itemize}
\item[(S1)] For a.e. $t\in (0,T)$
\[\re\ni\lambda\mapsto\sigma(t,\lambda)\]
is continuous and
$$(0,T)\ni t \mapsto\sigma(t,\lambda)$$
is measurable for every $\lambda\in\re$.
\item[(S2a)] $\sigma$ is $\alpha$-Hölder continuous, i.e., there exists an $\alpha\in(0,1]$ and a constant $L_\alpha>0$ such that for all $\lambda,\mu\in\re$, a.e. $t\in (0,T)$ there holds
$$|\sigma(t,\lambda)-\sigma(t,\mu)|\leq L_\alpha|\lambda-\mu|^\alpha.$$
\item[(S2b)] We assume $\sigma(t,0)=0$ for almost all $t\in (0,T)$.
\item[(S3)] $\sigma$ has a sublinear growth, i.e., there exists $C_\sigma>0$ such that
$$|\sigma(t,\lambda)|^2\leq C_\sigma(1+|\lambda|^2)$$
for all $\lambda\in\re$, a.e. $t\in (0,T)$.
\end{itemize}

In the following we introduce the notion of infinite dimensional Hölder noise. Let $\HS(\lz)$ denote the space of Hilbert-Schmidt operators from $L^2(D)$ to $L^2(D)$.
We consider an operator $B:(0,T)\times\lz\rightarrow \HS(\lz)$ that is for $(t,v)\in (0,T)\times\lz$, $\varphi\in\lz$ of the form
\begin{align}\label{defi_B}
B(t,v)\varphi(x)=\sigma(t,v(x))\int_D k(x,y)\varphi(y)\,dy
\end{align}
for a.e. $x\in D$, with a symmetric kernel $k\in L^2(D\times D)$ that satisfies
$$\operatorname*{ess\,sup}_{y\in D} \|k(\cdot,y)\|_{L^2(D)}^2=\operatorname*{ess\,sup}_{x\in D}\|k(x,\cdot)\|_{L^2(D)}^2 \leq C_k$$
for a constant $C_k\geq0$.

\begin{remark}
Let $X,Y$ be two Hilbert spaces. A bounded operator $K:X\rightarrow Y$ is a Hilbert-Schmidt operator iff it is a Hilbert-Schmidt integral operator, i.e. there exists a kernel $l\in L^2(X\times Y)$ such that for all $\varphi\in\lz$ and a.e. $x\in D$
\begin{align*}
    K\varphi(x)=\int_D l(x,y)\varphi(y)\,dy.
\end{align*}
Moreover, there holds $\|K\|_{\HS}=\|l\|_{L^2(D)}$, see \cite[Satz 3.19]{W00} and \cite[p.93]{LR17}.
\end{remark}

\begin{notation}
In the following we will denote by $\|\cdot\|_r$ the norm in $L^r(D)$ for $1\leq r\leq\infty$, by $\langle\cdot,\cdot\rangle_{L^2}$ the dual pairing in $L^2(D)$, and by $\|\cdot\|_{\HS}$ the Hilbert-Schmidt norm on $\HS(L^2(D))$.\\
\end{notation}

The operator $B$ is well defined on $L^2(D)$, indeed, for $v,\varphi\in\lz$ and a.e. $t\in (0,T)$ there holds by using Cauchy-Schwarz inequality, (S3), and Fubini's theorem
\begin{align*}
\|B(t,v)(\varphi)\|_2^2&=\int_D\left|\int_D\sigma(t,v(x))k(x,y)\varphi(y)\,dy\right|^2\,dx\\
&\leq\int_D\left(\int_D|\sigma(t,v(x))|^2|k(x,y)|^2\,dy\right)\left(\int_D|\varphi(y)|^2\,dy\right)\,dx\\
&\leq\|\varphi\|_2^2\int_D\int_D C_\sigma(1+|v(x)|^2)|k(x,y)|^2\,dy\,dx\\
&\leq \|\varphi\|_2^2\left(C_\sigma\|k\|_{L^2(D\times D)}^2+C_\sigma\int_D\int_D|v(x)|^2|k(x,y)|^2\,dy\,dx\right)\\
&=\|\varphi\|_2^2C_\sigma\left(\knorm^2+\int_D|v(x)|^2\|k(x,\cdot)\|_2^2\,dy\right)\\
&\leq\|\varphi\|_2^2C_\sigma\left(\knorm^2+\operatorname*{ess\,sup}_{x\in D}\|k(x,\cdot)\|_2^2\|v\|_2^2\right).
\end{align*}
Let $(e_n)_{n\in\na}$ be an orthonormal basis of $\lz$.
Using Parseval's identity and (S3) we obtain for $v\in\lz$ and a.e. $(\omega,t)\in\Omega\times(0,T)$ by an analogous argumentation as above
\begin{align}\label{B_bounded}
\begin{aligned}
\|B(t,v)\|_\HS^2&=\sum_{n\in\na}\|B(t,v)(e_n)\|_2^2=\sum_{n\in\na}\int_D\left|\langle\sigma(t,v(x))k(x,\cdot),e_n(\cdot)\rangle_{L^2}\right|^2\,dx\\
&=\int_D|\sigma(t,v(x))|^2\sum_{n\in\na}\left|\langle k(x,\cdot),e_n(\cdot)\rangle_{L^2}\right|^2\,dx=\int_D|\sigma(t,v(x))|^2\|k(x,\cdot)\|_2^2\,dx\\
&\leq\int_D C_\sigma(1+|v(x)|^2)\|k(x,\cdot)\|_2^2\,dx\\
&\leq C_\sigma(\knorm^2+C_k\|v\|_2^2).
\end{aligned}
\end{align}
Using (S2) instead of (S3) in the same manner we get for $v,w\in L^2(D)$, and a.e. $t\in (0,T)$
\begin{align}\label{estimateBsigma}
\begin{aligned}
\|B(t,v)-B(t,w)\|_{\HS}^2
&\leq \int_D|\sigma(t,v(x))-\sigma(t,w(x))|^{2}\int_D|k(x,y)|^2\,dy\,dx\\
&\leq L_\alpha^2\int_D|v(x)-w(x)|^{2\alpha}\|k(x,\cdot)\|_2^2\,dx\\
&\leq C_k L_\alpha^2\|v-w\|_{2\alpha}^{2\alpha}\leq C_k L_\alpha^2 C(\alpha)\|v-w\|_2^{2\alpha}.
\end{aligned}
\end{align}
where $C(\alpha)>0$ is a constant from the continuous embedding of $L^2(D)$ in $L^{2\alpha}(D)$ for $\alpha\in (0,1)$.\\

\subsection{Main results and outline}

Let the Assumptions of Section~\ref{Sec_Hypo} hold.

\begin{defi}\label{defsolution}
Let $f:\re\rightarrow\re$ be a given Lipschitz continuous function, $\sigma:(0,T)\times\re\rightarrow\re$ fulfills (S1)-(S3) for $\alpha\in(0,1]$.
\begin{itemize}
\item[$i)$] A stochastic process $u$ is called a probabilistically strong solution, if
\begin{enumerate}
    \item $u$ is an $(\mathcal{F}_t)_{\tiT}$-adapted stochastic process and $u\in L^2(\Omega;C([0,T];L^2(D)))\cap L^p(\Omega;L^p(0,T;W^{1,p}_0(D)))$
    \item $u(0)=u_0$ $\mP$-a.s. in $\Omega$
    \item there holds for all $\tiT$, in $L^2(D)$, $\mP$-a.s. in $\Omega$
    \begin{align*}
    u(t)-u_0-\int_0^t a(x,u(s),\nabla u(s))\,ds+\int_0^t f(u(s))\,ds=\int_0^t B(s,u(s))\,dW_s.
    \end{align*}    
\end{enumerate}
\item[$ii)$] A triple $\left((\Omega',\mathcal{A}',\mFt,\mP'),\widetilde{u},(\mW_t)_{\tiT}\right)$ is called a martingale solution to \eqref{spde} with initial value $v_0$, if
    \begin{enumerate}
        \item $(\Omega',\mathcal{A}',\mFt,\mP')$ is a stochastic basis with a complete, right-continuous filtration
        \item $(\mW_t)_{\tiT}$ is an $\mFt$-adapted $Q$-Wiener process on $(\Omega',\mathcal{A}',\mP')$
        \item $\widetilde{u}\in L^2(\Omega';C([0,T];L^2(D)))\cap L^p(\Omega';L^p(0,T;W^{1,p}_0(D)))$ is an $\mFt$-adapted stochastic process 
        \item $v_0\in L^2(\Omega';L^2(D))$ has the same law as $u_0$
        \item There holds for all $\tiT$, in $L^2(D)$, $\mP'$-a.s. in $\Omega'$
            \begin{align*}
            \widetilde{u}(t)-v_0-\int_0^t a(x,\widetilde{u}(s),\nabla \widetilde{u}(s))\,ds+\int_0^t f(\widetilde{u}(s))\,ds=\int_0^t B(s,\widetilde{u}(s))\,d\mW_s.
            \end{align*}
    \end{enumerate}
\end{itemize}
\end{defi}

\begin{teo}\label{maintheorem_existence}
Assume that $f\in L^\infty(\re)$ is a given Lipschitz continuous function, and $\sigma:(0,T)\times\re\rightarrow\re$ fulfills (S1)-(S3) for an arbitrary $\alpha\in(0,1)$. Then, \eqref{spde} admits a martingale solution in the sense of Definition~\ref{defsolution}~$ii)$.
\end{teo}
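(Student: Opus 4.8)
The plan is to follow the scheme announced in the introduction: regularize the noise so that the variational theory applies, add a vanishing higher-order perturbation to make the operator non-degenerate, solve the resulting problem by \cite{LR15}, and finally recover a martingale solution of \eqref{spde} by a stochastic compactness argument. I would start with the noise. Since $\sigma(t,\cdot)$ is only $\alpha$-H\"older continuous by (S2a), it is not Lipschitz and the framework of \cite{LR15} does not apply to $B$ directly. For each $n\in\na$ I would define the inf-convolution $\sigma_n(t,\lambda)=\inf_{\mu\in\re}\big(\sigma(t,\mu)+n|\lambda-\mu|\big)$, which is Lipschitz in $\lambda$ with constant $n$, measurable in $t$, still satisfies $\sigma_n(t,0)=0$ and the sublinear growth (S3) with a constant independent of $n$, and converges to $\sigma$ uniformly on compact sets as $n\to\infty$. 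Defining $B_n$ from $\sigma_n$ as in \eqref{defi_B}, the estimates \eqref{B_bounded} and \eqref{estimateBsigma} persist for $B_n$ with $n$-independent constants, while $B_n$ is now globally Lipschitz from $\lz$ into $\HS(\lz)$.

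Second, I would add a singular higher-order perturbation in the spirit of \cite{STVZ23}. For $m\in\na$ large enough that $\sobolevmq$ embeds compactly into $\lz$ and with $q\geq p$, I would add $\tfrac1n$ times a strictly monotone, coercive operator of order $m$ mapping $\sobolevmq$ into $\sobolevmqs$. Together with $v\mapsto-\diver a(x,v,\nabla v)+f(v)$, the perturbed operator $A_n$ maps the Gelfand triple built over $\sobolevmq$ into its dual; thanks to (A1)--(A3), the Lipschitz dependence on $\lambda$ in (A3), and the strict monotonicity supplied by the $\tfrac1n$-term, it satisfies the hemicontinuity, local monotonicity, coercivity and growth hypotheses of \cite{LR15}. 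With the Lipschitz noise $B_n$ this yields, for each $n$, a unique variational solution $\und\in L^2(\Omega;C([0,T];\lz))\cap L^q(\Omega;L^q(0,T;\sobolevmq))$ with initial datum $u_0$, for which It\^o's formula for $\|\und(t)\|_2^2$ holds.

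Third, I would derive $n$-uniform a priori estimates. Applying It\^o's formula to $\|\und(t)\|_2^2$ and using the coercivity in (A2), the sublinear bound \eqref{B_bounded}, $f\in L^\infty(\re)$, the Burkholder--Davis--Gundy inequality and Gronwall's lemma, I expect bounds for $\und$ in $L^2(\Omega;L^\infty(0,T;\lz))\cap L^p(\Omega;L^p(0,T;\sobolevp))$, a bound of order $n^{-1}$ on the higher-order term (so that the perturbation vanishes in the limit), and a bound for the flux $a(\cdot,\und,\nabla\und)$ in $L^{p'}$ over $\otr$. The growth condition in (A2) and the time regularity read off from the equation --- H\"older in time for the deterministic part and fractional Sobolev in time for the It\^o part --- give, via an Aubin--Lions-type argument, tightness of the laws of $(\und)$ on, e.g., $L^p(0,T;\lz)$, together with tightness of the driving Wiener processes. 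Prokhorov's and Skorokhod's theorems then provide a new stochastic basis $(\Omega',\mathcal{A}',\mFt,\mP')$, processes $\widetilde u_n,\widetilde u$ and Wiener processes $\mW_n,\mW$ with unchanged laws such that $\widetilde u_n\to\widetilde u$ $\mP'$-a.s.\ in $L^p(0,T;\lz)$ and $\mW_n\to\mW$.

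Finally, I would identify the limit equation on $(\Omega',\mathcal{A}',\mFt,\mP')$. The $\tfrac1n$-perturbation disappears by the estimate above. Using the $\mP'$-a.s.\ strong $L^2$-convergence of $\widetilde u_n$, the uniform convergence $\sigma_n\to\sigma$ combined with the H\"older estimate \eqref{estimateBsigma}, and a standard lemma on convergence of stochastic integrals under Skorokhod coupling, the martingale term $\int_0^\cdot B_n(s,\widetilde u_n)\,d\mW_n$ converges to $\int_0^\cdot B(s,\widetilde u)\,d\mW$. The genuinely hard step is to identify the weak limit $\chi$ of $a(\cdot,\widetilde u_n,\nabla\widetilde u_n)$: because $a$ is only pseudomonotone and not monotone in its second argument, Minty's trick is not available directly. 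Instead I would combine the monotonicity (A1) in $\xi$, the Lipschitz dependence on $\lambda$ from (A3), and the strong convergence of $\widetilde u_n$, and use the energy (in)equality from It\^o's formula for $\|\widetilde u_n(t)\|_2^2$ to bound $\limsup_n\erwws{\int_0^T\int_D a(\cdot,\widetilde u_n,\nabla\widetilde u_n)\cdot\nabla\widetilde u_n\dx\,ds}$ by the corresponding limit quantity; a stochastic Minty--Browder argument then yields $\chi=a(\cdot,\widetilde u,\nabla\widetilde u)$. This limit identification, carried out in the presence of the It\^o correction and the merely H\"older noise, is the main obstacle; once it is settled, the triple $\big((\Omega',\mathcal{A}',\mFt,\mP'),\widetilde u,(\mW_t)_{\tiT}\big)$ is the desired martingale solution in the sense of Definition~\ref{defsolution}~$ii)$.
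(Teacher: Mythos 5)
Your proposal follows essentially the same route as the paper: inf-convolution of $\sigma$, a vanishing $\tfrac1n$ higher-order perturbation over a Gelfand triple on $\sobolevmq$, well-posedness via \cite{LR15}, $n$-uniform It\^o estimates, tightness and Prokhorov--Skorokhod, identification of the stochastic integral, and a stochastic Minty argument based on the energy (in)equality to identify the weak limit of $a(\cdot,\und,\nabla\und)$. The only point to sharpen is that the H\"older bound (S2a) gives \emph{global} uniform convergence $\sup_{\lambda\in\re}|\sigma_n(t,\lambda)-\sigma(t,\lambda)|\leq C n^{\alpha/(\alpha-1)}\to 0$, not merely uniform convergence on compact sets; this global rate is what lets one pass to the limit in $\erww{\int_0^T\|B_n(t,v_n)-B(t,v_n)\|_{\HS}^2\,dt}$ without a separate uniform-integrability argument for the tails of $v_n$.
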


\begin{teo}\label{maintheorem_uniqueness}
Assume that $(W_t)_{\tiT}$ is a $(\mathcal{F}_t)_{\tiT}$-adapted $Q$-Wiener process with values in $U$ with respect to the stochastic basis $(\Omega,\mathcal{A},(\mathcal{F}_t)_{\tiT},\mP)$, $f:\re\rightarrow\re$ is a given Lipschitz continuous function, and $\sigma:(0,T)\times\re\rightarrow\re$ fulfills (S1)-(S3) for $\alpha\in[\halbe,1)$. If $u_1,u_2$ are both probabilistically strong solutions to \eqref{spde} in the sense of Definition~\ref{defsolution}~$i)$ with initial values $u_0^1,u_0^2$ in $L^2(D)$ respectively, then there holds for any $t\in[0,T]$
\begin{align*}
    \erww{\|u_1(t)-u_2(t)\|_1}\leq e^{LT}\erww{\|u_0^1-u_0^2\|_1}.
\end{align*}
\end{teo}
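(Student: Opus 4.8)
The plan is to run an $L^1$-version of the Yamada--Watanabe argument. Set $w=u_1-u_2$ and subtract the two copies of \eqref{spde}, so that $w$ solves, $\mP$-a.s. and in the appropriate duality,
\[
dw=\big[\diver\!\big(a(\cdot,u_1,\nabla u_1)-a(\cdot,u_2,\nabla u_2)\big)-\big(f(u_1)-f(u_2)\big)\big]\,dt+\big(B(t,u_1)-B(t,u_2)\big)\,dW_t,
\]
with $w(0)=u_0^1-u_0^2$. I would then introduce a sequence of even, convex functions $\psi_n\in C^2(\re)$ approximating $|\cdot|$ with $\psi_n(r)\to|r|$ uniformly, $\psi_n'(0)=0$, $|\psi_n'|\le1$, and with $\psi_n''$ supported in $\{a_n\le|r|\le a_{n-1}\}$ for a sequence $a_n\downarrow0$ chosen so that $0\le\psi_n''(r)\le \tfrac{2}{n|r|}$ there. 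The crucial consequences of this construction are the two pointwise bounds $\psi_n''(r)\,|r|\le \tfrac{2}{n}$ and $\psi_n''(r)\,|r|^{2\alpha}\le \tfrac{2}{n}|r|^{2\alpha-1}\le \tfrac2n a_{n-1}^{2\alpha-1}$, the latter being uniformly small \emph{precisely because} $2\alpha-1\ge0$; this is where the hypothesis $\alpha\in[\halbe,1)$ enters. I would apply the infinite-dimensional It\^o formula to $F_n(w(t)):=\int_D\psi_n(w(t,x))\,dx$; since $\psi_n$ is $C^2$ with bounded derivatives and $w\in L^2(\Omega;C([0,T];\lz))\cap L^p(\Omega;L^p(0,T;W^{1,p}_0(D)))$, with $F_n'(w)=\psi_n'(w(\cdot))$ and $F_n''(w)=$ multiplication by $\psi_n''(w(\cdot))$.

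Having applied It\^o, I would group the resulting terms. For the elliptic term, using $\psi_n'(w)\in W^{1,p}_0(D)$ and integration by parts gives $-\int_D\psi_n''(w)\,\nabla w\cdot\big(a(\cdot,u_1,\nabla u_1)-a(\cdot,u_2,\nabla u_2)\big)\,dx$, which I split by inserting $a(\cdot,u_2,\nabla u_1)$. The pure gradient-difference part $-\int_D\psi_n''(w)\,\big(a(\cdot,u_2,\nabla u_1)-a(\cdot,u_2,\nabla u_2)\big)\cdot(\nabla u_1-\nabla u_2)\,dx$ is nonpositive by (A1) together with $\psi_n''\ge0$, so it may be discarded from the upper bound. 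The remaining part is controlled via (A3) by $\int_D\psi_n''(w)\,|w|\,|\nabla w|\,\big(C_5|\nabla u_1|^{p-1}+h\big)\,dx$. For the lower-order term, $|\psi_n'|\le1$, the Lipschitz bound on $f$ and $f(0)=0$ give the contribution $-\int_D\psi_n'(w)\big(f(u_1)-f(u_2)\big)\,dx\le L_f\,\|w\|_1$. The It\^o correction term is $\tfrac12\int_D\psi_n''(w)\,\rho(x)\,dx$, where $\rho(x)$ is the quadratic-variation density of the noise difference; by the structure \eqref{defi_B}, the estimate behind \eqref{estimateBsigma} and $\|k(x,\cdot)\|_2^2\le C_k$, this is bounded by $\tfrac{C_kL_\alpha^2}{2}\int_D\psi_n''(w)\,|w|^{2\alpha}\,dx$. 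Finally the stochastic integral $\int_0^t\langle\psi_n'(w(s)),(B(s,u_1)-B(s,u_2))\,dW_s\rangle_{L^2}$ is a genuine martingale (its bracket is finite since $\|\psi_n'(w)\|_2^2\le|D|$ and $\|B(s,u_1)-B(s,u_2)\|_{\HS}^2$ is integrable by \eqref{estimateBsigma}), hence has zero expectation.

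Taking expectations and discarding the nonpositive monotone term, I obtain
\[
\erww{F_n(w(t))}\le \erww{F_n(w(0))}+L_f\int_0^t\erww{\|w(s)\|_1}\,ds+R_n^{\mathrm{ell}}(t)+R_n^{\mathrm{noise}}(t),
\]
and the whole argument hinges on showing $R_n^{\mathrm{ell}},R_n^{\mathrm{noise}}\to0$. For the noise remainder, $\psi_n''(w)|w|^{2\alpha}\le\tfrac2n a_{n-1}^{2\alpha-1}\le\tfrac2n$ gives $R_n^{\mathrm{noise}}(t)\le C_kL_\alpha^2|D|T/n\to0$. For the elliptic remainder, $\psi_n''(w)|w|\le\tfrac2n$ together with $|\nabla w|\big(C_5|\nabla u_i|^{p-1}+h\big)\in L^1(\Omega\times(0,T)\times D)$ (which follows from $u_i\in L^p(\Omega;L^p(0,T;W^{1,p}_0(D)))$, $h\in L^{p'}(D)$ via Hölder's inequality) yields $R_n^{\mathrm{ell}}(t)\le\tfrac2n\,\erww{\int_0^t\!\int_D|\nabla w|(C_5|\nabla u_1|^{p-1}+h)\,dx\,ds}\to0$. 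Passing to the limit, using $\psi_n(w(t))\to|w(t)|$ and $\psi_n(w(0))\to|u_0^1-u_0^2|$ with dominated convergence, leaves
\[
\erww{\|w(t)\|_1}\le\erww{\|u_0^1-u_0^2\|_1}+L_f\int_0^t\erww{\|w(s)\|_1}\,ds,
\]
and Gronwall's lemma gives $\erww{\|u_1(t)-u_2(t)\|_1}\le e^{L_fT}\erww{\|u_0^1-u_0^2\|_1}$, i.e. the claim with $L=L_f$.

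The main obstacle I anticipate is the simultaneous control of the two remainders: the Hölder exponent forces the noise correction to behave like $\psi_n''(w)|w|^{2\alpha}$, while the $u$-dependence of $a$ through (A3) forces the elliptic remainder to behave like $\psi_n''(w)|w|$. Reconciling both within a single admissible Yamada--Watanabe approximation is exactly what pins down the bound $\psi_n''(r)\lesssim \tfrac1{n|r|}$ and thereby the restriction $\alpha\ge\halbe$; with the classical choice $\psi_n''(r)\sim\tfrac1{n|r|^{2\alpha}}$ the elliptic remainder would fail to vanish for $\alpha>\halbe$. A secondary technical point is the rigorous justification of It\^o's formula for the non-cylindrical functional $w\mapsto\int_D\psi_n(w)\,dx$ in the variational setting, together with the validity of the integration by parts producing $\psi_n''(w)\nabla w$.
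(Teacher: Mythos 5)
Your argument is correct and shares the paper's skeleton: apply It\^o's formula to $\int_D\Psi(u_1-u_2)\,dx$ for a smooth convex approximation $\Psi$ of $|\cdot|$, discard the monotone gradient part via (A1), control the cross term via (A3), the zero-order term via the Lipschitz continuity of $f$, and the quadratic-variation term via the kernel structure of $B$ together with the H\"older bound on $\sigma$, then conclude with Gronwall. The genuine difference lies in the choice of regularizer and hence in how the two remainders are killed. The paper takes a mollified sign function with $|N_\eps''(r)|\leq\frac{2c}{\eps}\mathds{1}_{\{|r|\leq\eps\}}$, so that $N_\eps''(r)|r|\leq 2c$ is merely bounded; the elliptic remainder then vanishes not through a small prefactor but because $\mathds{1}_{\{|u_1-u_2|\leq\eps\}}|\nabla(u_1-u_2)|^p\rightarrow 0$ a.e.\ (the gradient of $u_1-u_2$ vanishes a.e.\ on $\{u_1=u_2\}$), while the noise remainder decays like $\eps^{2\alpha-1}$ and needs a separate dominated-convergence argument in the borderline case $\alpha=\halbe$. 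Your Yamada--Watanabe weight $\psi_n''(r)\leq\frac{2}{n|r|}$ makes both remainders $O(1/n)$ uniformly for $\alpha\in[\halbe,1)$, which buys you a cleaner limit passage: no case distinction at $\alpha=\halbe$ and no level-set argument for the gradient term, at the mild price of verifying that the normalization $\int\psi_n''=1$ is compatible with the pointwise bound (i.e.\ $a_{n-1}/a_n\geq e^{n/2}$, which is the standard construction). Both routes invoke $\alpha\geq\halbe$ at exactly the same spot, and both rest on the same technical ingredient you flag at the end, namely the validity of It\^o's formula for the functional $w\mapsto\int_D\Psi(w)\,dx$ in the variational setting, for which the paper cites \cite[p.78]{P21}; your $\psi_n$ is $C^2$ with bounded first and second derivatives, so the same reference applies verbatim.
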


\begin{teo}\label{maintheorem}
Assume that $f\in L^\infty(\re)$ is a given Lipschitz continuous function, and $\sigma:(0,T)\times\re\rightarrow\re$ fulfills (S1)-(S3) for $\alpha\in(\halbe,1)$. Then, \eqref{spde} admits a unique probabilistically strong solution $u$ in the sense of Definition~\ref{defsolution}~$i)$.
\end{teo}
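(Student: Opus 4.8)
The plan is to obtain the probabilistically strong solution by feeding the two preceding results into the Gyöngy--Krylov characterization of convergence in probability from \cite{GK96}. The natural starting point is the approximating sequence already constructed for Theorem~\ref{maintheorem_existence}: for each $n\in\na$ one regularizes the coefficient $\sigma$ by inf-convolution to obtain a Lipschitz operator $B_n$ and adds a vanishing higher-order perturbation, so that the variational well-posedness result of \cite{LR15} furnishes a unique probabilistically strong solution $u_n$ on the \emph{fixed} stochastic basis $(\Omega,\mathcal{A},(\mathcal{F}_t)_{\tiT},\mP)$ driven by the \emph{fixed} $Q$-Wiener process $(W_t)_{\tiT}$, all with the same initial datum $u_0$. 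The uniform a priori estimates established there make the laws of $u_n$ tight on $C([0,T];\lz)$, while the uniform bound in $L^p(\Omega;L^p(0,T;\sobolevp))$ supplies, along subsequences, weak compactness of the gradients.

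First I would check that the joint laws of the pairs $(u_n,u_m)_{n,m\in\na}$ are tight on $C([0,T];\lz)\times C([0,T];\lz)$; this is immediate from tightness of the marginals. Given any two subsequences, Prokhorov's theorem extracts a weakly convergent sub-subsequence of the joint laws, and the Skorokhod representation theorem then yields, on a new stochastic basis $(\Omega',\mathcal{A}',\mFt,\mP')$, copies $(\bar u,\bar v)$ of the pair together with a limiting $Q$-Wiener process $\mW$, realized so that $\bar u$ and $\bar v$ are driven by the \emph{common} noise $\mW$ and carry the \emph{common} initial value (which inherits the law of $u_0$).

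Next I would identify both $\bar u$ and $\bar v$ as martingale solutions of \eqref{spde} in the sense of Definition~\ref{defsolution}~$ii)$. This is essentially the limit passage of Theorem~\ref{maintheorem_existence} performed separately for each component: one lets the inf-convolution regularization and the higher-order perturbation vanish, uses \eqref{B_bounded} and \eqref{estimateBsigma} to pass to the limit in the Itô integral, and invokes a Minty--Browder/pseudomonotonicity argument to identify the weak limit of $a(x,u_n,\nabla u_n)$ with $a(x,\bar u,\nabla\bar u)$, and analogously for $\bar v$. I expect this to be the main obstacle: since $a$ is only pseudomonotone and not monotone, the identification of the nonlinear diffusion term cannot rest on a pure monotonicity trick and must exploit the energy and compactness information transported through the Skorokhod coupling.

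Once $\bar u$ and $\bar v$ are both martingale solutions on the common basis $(\Omega',\mathcal{A}',\mFt,\mP')$ with the same $Q$-Wiener process $\mW$ and the same initial value, I apply the pathwise uniqueness of Theorem~\ref{maintheorem_uniqueness}, which is available since $\alpha\in(\halbe,1)\subseteq[\halbe,1)$. The $L^1$-stability estimate with identical initial data gives $\erwws{\|\bar u(t)-\bar v(t)\|_1}=0$ for every $\tiT$, hence $\bar u=\bar v$ $\mP'$-a.s., so every weak limit of the joint laws is concentrated on the diagonal of $C([0,T];\lz)\times C([0,T];\lz)$. By the Gyöngy--Krylov lemma this forces $(u_n)_{n\in\na}$ to converge in probability in $C([0,T];\lz)$ to a limit $u$. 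Being a limit in probability of $(\mathcal{F}_t)_{\tiT}$-adapted processes, $u$ is itself $(\mathcal{F}_t)_{\tiT}$-adapted, and repeating the limit identification above on the original space (now without any change of probability space) shows that $u$ is a probabilistically strong solution in the sense of Definition~\ref{defsolution}~$i)$. Uniqueness of this solution is then an immediate consequence of Theorem~\ref{maintheorem_uniqueness} applied with $u_0^1=u_0^2=u_0$.
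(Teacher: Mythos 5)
Your proposal is precisely the argument the paper intends: the paper proves Theorem~\ref{maintheorem} by remarking that it is a direct consequence of Theorem~\ref{maintheorem_existence} and Theorem~\ref{maintheorem_uniqueness} via the Gy\"ongy--Krylov characterization of convergence in probability, and your write-up fleshes out exactly that scheme (joint tightness of pairs, Skorokhod coupling with common noise and initial datum, identification of both limits as martingale solutions, pathwise uniqueness forcing concentration on the diagonal, convergence in probability, and a final identification on the original basis). One small correction: the paper establishes tightness of the laws of $(u_n)$ only on $L^s(0,T;L^2(D))$ (Lemma~\ref{lem_undelta_tight}), not on $C([0,T];L^2(D))$, so the Gy\"ongy--Krylov argument should be run in that Polish space, with continuity of the paths of the limit recovered afterwards from the equation as in Proposition~\ref{lem_limit_Equ_without_identification}.
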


\begin{remark}
Theorem~\ref{maintheorem} is a direct consequence of Theorem~\ref{maintheorem_existence} and Theorem~\ref{maintheorem_uniqueness} by an argument of Gyöngy and Krylov, see \cite{GK96}.
\end{remark}

To prove this result we proceed in the following way: Firstly, we approximate in Section \ref{Sec_Noise_appr} the Hölder continuous operator $B$ by a Lipschitz continuous operator $B_n$ and study its properties. Then, in Section \ref{Sec_ApprEqu} we add a higher order operator to the equation which will disappear in the limit afterwards and show existence of a unique variational solution to the approximated equation by using \cite{LR15}. Then, we do some a priori estimates for the approximated solution in Section \ref{Sec_A_priori} which will be useful to pass to the limit later and to show tightness in Section \ref{Sec_Tightness}. These tightness results allow us to apply the stochastic compactness argument of Prokhorov and Skorokhod in Section \ref{Sec_Stoch_Compactness} and obtain thereby a martingale solution to \eqref{spde}. In Section \ref{Sec_Uniqueness} we show pathwise uniqueness of solutions to \eqref{spde} and therefore obtain a strong solution to \eqref{spde} by \cite{GK96}.

\section{Existence of approximate solutions}

\subsection{Lipschitz continuous approximation of the noise}\label{Sec_Noise_appr}

We start by approximating the function $\sigma$ to get a Lipschitz continuous approximation of the operator $B$.

\begin{defi}
\begin{itemize}
\item[$i)$] Let $\sigma:(0,T)\times \mathbb{R}\rightarrow \mathbb{R}$ satisfy (S1)-(S3). For any $n\in\na$, $\lambda\in\re$, and a.e. $t\in (0,T)$ we introduce the Lipschitz regularization of $\sigma$ via inf-convolution:
\begin{align}\label{231124_01}
\sigma_n(t,\lambda):=\inf_{\mu\in\re}(\sigma(t,\mu)+n|\lambda-\mu|),
\end{align}
see, e.g., \cite[Theorem 9.2.1]{ABM06}.
\item[$ii)$] Let $B$ be defined by \eqref{defi_B}. Then we define for any $n\in\na$, $v,\varphi\in L^2(D)$, and a.e. $(t,x)\in(0,T)\times D$
\begin{align*}
B_n(t,v)\varphi(x):=\sigma_n(t,v(x))\int_D k(x,y)\varphi(y)\,dy.
\end{align*}
\end{itemize}
\end{defi}

\begin{prop}\label{231124_p1}
Let $\sigma:(0,T)\times\re\rightarrow\re$ satisfy (S1)-(S3), $C_{\sigma}$ be the constant given by $(S3)$,
\[n_0:=\left\lceil\sqrt{C_\sigma}\right\rceil=\min\{n\in\mathbb{N} : n\geq \sqrt{C_{\sigma}}\}.\] 
Then, the Lipschitz regularization of $\sigma$ via inf-convolution $\sigma_n$ has the following properties: There exists a full-measure set $U\subseteq (0,T)$ such that, for any $t\in U$
\begin{itemize}
\item[$i)$]$\sigma_n(t,\lambda)>-\infty$ for all $\lambda\in\mathbb{R}$,
\item[$ii)$] For all $\lambda\in\mathbb{R}$ and all $n\in\mathbb{N}$ such that $n\geq n_0$ 
\[\sigma_n(t,\lambda)\leq \sigma(t,\lambda)\]
\item[$iii)$] Lipschitz continuity:
 \[|\sigma_n(t,\lambda_1)-\sigma_n(t,\lambda_2)|\leq n|\lambda_1-\lambda_2|\]
 for all $n\in\na$ such that $n\geq n_0$ and all $\lambda_1,\lambda_2\in\re$.
 \item[$iv)$] Uniform boundedness with respect to $n\in \na$ and $\lambda\in\re$: There exists a constant $C_{\alpha}>0$, only depending on the Hölder exponent $\alpha\in (0,1)$ and the Hölder constant $L_{\alpha}>0$ of $\sigma$ such that
 \begin{align}\label{231124_03}
 |\sigma_n(t,\lambda)-\sigma(t,\lambda)|\leq C_{\alpha}
 \end{align}
 for all $n\in\mathbb{N}$ such that $n\geq n_0$ and all $\lambda\in\mathbb{R}$. Moreover,
 \begin{align}\label{231124_04}
 |\sigma_n(t,\lambda)|^2\leq 2(C_{\alpha}^2+C_{\sigma}(1+|\lambda|^2))
 \end{align}
for all $\lambda\in\re$. 
\item[$v)$] Uniform convergence:
\begin{align*}
\lim_{n\rightarrow \infty}\sup_{t\in U}\sup_{\lambda\in \re}|\sigma_n(t,\lambda)-\sigma(t,\lambda)|=0.
\end{align*}
\end{itemize}
\end{prop}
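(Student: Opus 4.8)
The plan is to take $U$ to be the full-measure subset of $(0,T)$ on which the pointwise hypotheses (S1)--(S3) hold simultaneously (a countable/finite intersection of full-measure sets), fix an arbitrary $t\in U$, and then verify $i)$--$v)$ by elementary one-variable arguments. The point worth noticing at the outset is that the entire quantitative content sits in the scalar profile $g(r)=L_\alpha r^\alpha-nr$ on $[0,\infty)$, so once $i)$--$iii)$ are dispatched as standard inf-convolution facts, the work concentrates in a single maximization.

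First I would establish $i)$. Using the at-most-linear bound coming from (S3), namely $\sigma(t,\mu)\geq -\sqrt{C_\sigma}(1+|\mu|)$, together with $|\mu|\leq|\lambda|+|\lambda-\mu|$, I estimate
\[
\sigma(t,\mu)+n|\lambda-\mu|\;\geq\;-\sqrt{C_\sigma}(1+|\lambda|)+(n-\sqrt{C_\sigma})|\lambda-\mu|\;\geq\;-\sqrt{C_\sigma}(1+|\lambda|),
\]
the last inequality holding precisely because $n\geq n_0\geq\sqrt{C_\sigma}$; taking the infimum over $\mu$ gives $\sigma_n(t,\lambda)>-\infty$. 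Property $ii)$ then follows by inserting the admissible choice $\mu=\lambda$ into the infimum, and $iii)$ follows from $|\lambda_1-\mu|\leq|\lambda_2-\mu|+|\lambda_1-\lambda_2|$ after symmetrizing in $\lambda_1,\lambda_2$, the finiteness from $i)$ making the manipulation legitimate.

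The heart of the argument is $iv)$, and it reduces to one scalar optimization; this is the only step I expect to require real work. Since $ii)$ gives $0\leq\sigma(t,\lambda)-\sigma_n(t,\lambda)$, I rewrite and bound using the Hölder estimate (S2a):
\[
\sigma(t,\lambda)-\sigma_n(t,\lambda)=\sup_{\mu\in\re}\bigl(\sigma(t,\lambda)-\sigma(t,\mu)-n|\lambda-\mu|\bigr)\leq\sup_{r\geq0}\bigl(L_\alpha r^\alpha-nr\bigr).
\]
Maximizing $g(r)=L_\alpha r^\alpha-nr$ yields the interior maximizer $r^\ast=(L_\alpha\alpha/n)^{1/(1-\alpha)}$ and the value $g(r^\ast)=L_\alpha(1-\alpha)\bigl(\tfrac{L_\alpha\alpha}{n}\bigr)^{\frac{\alpha}{1-\alpha}}$; here $\alpha\in(0,1)$ is essential so that the exponent $\frac{\alpha}{1-\alpha}$ is finite and positive. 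For $n\geq n_0\geq1$ this is bounded by $C_\alpha:=L_\alpha(1-\alpha)(L_\alpha\alpha)^{\frac{\alpha}{1-\alpha}}$, a constant depending only on $\alpha$ and $L_\alpha$, which together with $ii)$ gives $|\sigma_n(t,\lambda)-\sigma(t,\lambda)|\leq C_\alpha$. The second bound in $iv)$ is then immediate from $(a+b)^2\leq 2a^2+2b^2$, the estimate just proved, and (S3).

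Finally, $v)$ comes for free from the same computation: the bound $g(r^\ast)=L_\alpha(1-\alpha)(L_\alpha\alpha/n)^{\frac{\alpha}{1-\alpha}}$ is uniform in both $t\in U$ and $\lambda\in\re$ and tends to $0$ as $n\to\infty$ since $\frac{\alpha}{1-\alpha}>0$, whence $\sup_{t\in U}\sup_{\lambda\in\re}|\sigma_n(t,\lambda)-\sigma(t,\lambda)|\to0$. The one recurring point requiring care is the threshold $n\geq n_0$: it is exactly what makes the lower bound in $i)$ work, and hence underpins the finiteness used in $ii)$ and $iii)$, being the natural level $\sqrt{C_\sigma}$ beyond which the linear penalty $n|\lambda-\mu|$ dominates the at-most-linear growth of $\sigma$ guaranteed by (S3).
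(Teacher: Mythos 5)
Your proposal is correct and follows essentially the same route as the paper: the same choice of $U$, the same elementary inf-convolution manipulations for $ii)$ and $iii)$, and the same one-variable maximization of $L_\alpha r^\alpha-nr$ (your maximizer and maximal value agree with the paper's after rewriting the exponents) for $iv)$ and $v)$. The only cosmetic difference is that for $i)$ you give the direct estimate $\sigma(t,\mu)+n|\lambda-\mu|\geq-\sqrt{C_\sigma}(1+|\lambda|)+(n-\sqrt{C_\sigma})|\lambda-\mu|$ where the paper cites a standard inf-convolution theorem, which makes your argument slightly more self-contained.
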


\begin{proof}
We choose the full-measure set $U\subseteq(0,T)$ such that (S1)-(S3) hold true for all $t\in U$.
$i)$ From (S1)-(S3) it follows that $\lambda\mapsto \sigma(t,\lambda)$ is continuous and 
\[\sigma(t,\lambda)\geq -\sqrt{C_{\sigma}(1+|\lambda|^2)}\geq -\sqrt{C_{\sigma}}(1+|\lambda|)\]
for all $t\in U$. Now, the result follows from \cite[Theorem 9.2.1]{ABM06}.\\
$ii)$ follows immediately by discarding the infimum and plugging $\mu=\lambda$ in \eqref{231124_01}.\\
$iii)$ For any $t\in U$, any $n\in\na$ such that $n\geq n_0$ and any $\lambda_1,\lambda_2\in\re$ we have
\begin{align*}
&\sigma_n(t,\lambda_1)-\sigma_n(t,\lambda_2)\\[1ex]
&\leq \inf_{\mu\in\re}(\sigma(t,\mu)+n|\lambda_1-\lambda_2|+n|\lambda_2-\mu|)-\inf_{\mu\in\re}(\sigma(t,\mu)+n|\lambda_2-\mu|)\\[1ex]
&=n|\lambda_1-\lambda_2|+\inf_{\mu\in\re}
\{\sigma(t,\mu)+n|\lambda_2-\mu|\}-\inf_{\mu\in\re}(\sigma(t,\mu)+n|\lambda_2-\mu|)\\[1ex]
&= n |\lambda_1-\lambda_2|.
\end{align*}
With the same argument we obtain
\begin{align*}
\sigma_n(\omega,t,\lambda_2)-\sigma_n(\omega,t,\lambda_1)\leq n|\lambda_1-\lambda_2|.
\end{align*}
$iv)$ For any $t\in U$, any $n\in\na$ such that $n\geq n_0$ and any $\lambda\in\re$ using $ii)$ and $(S2)$ we get
\begin{align}\label{231119_01}
\begin{aligned}
&|\sigma(t,\lambda)-\sigma_n(t,\lambda)|=\sigma(t,\lambda)-\sigma_n(t,\lambda)\\[1ex]
&=\sigma(t,\lambda)-\inf_{\mu\in\re}(\sigma(t,\mu)+n|\lambda-\mu|)\\[1ex]
&=\sigma(t,\lambda)+\sup_{\mu\in\re}(-\sigma(t,\mu)-n|\lambda-\mu|)\\[1ex]
&=\sup_{\mu\in\re}(\sigma(t,\lambda)-\sigma(t,\mu)-n|\lambda-\mu|)\\[1ex]
&\leq \sup_{\mu\in\re}(|\sigma(t,\lambda)-\sigma(t,\mu)|-n|\lambda-\mu|)\\[1ex]
&\leq\sup_{\mu\in\re}\{L_\alpha|\lambda-\mu|^\alpha-n|\lambda-\mu|\}\leq\max_{r\in[0,\infty)} h_n(r),
\end{aligned}
\end{align}
where $h_n:[0,\infty)\rightarrow\re,\ h_n(r):=L_\alpha r^\alpha-nr$. For any $n\in\na$, we have $h_n(0)=0$ and $h_n'(r)=0$ iff $r=r_0^n$, where
\begin{align*}
r_0^n:=\left(\frac{n}{L_\alpha\alpha}\right)^{\frac{1}{\alpha-1}}.
\end{align*}
Since $h_n'(r)>0$ for all $0<r<r_0^n$ and $h_n'(r)<0$ for all $r>r_0^n$ and
\[h_n(r_0^n)=\frac{n^{\frac{\alpha}{\alpha-1}}(1-\alpha)}{L_\alpha^{\frac{1}{\alpha-1}}\alpha^{\frac{\alpha}{\alpha-1}}}>0\]
for $\alpha\in (0,1)$, it follows that 
\begin{align}\label{231119_02}
\max_{r\in[0,\infty)} h_n(r)=h_n(r_0^n)=\frac{n^{\frac{\alpha}{\alpha-1}}(1-\alpha)}{L_\alpha^{\frac{1}{\alpha-1}}\alpha^{\frac{\alpha}{\alpha-1}}}.
\end{align}
Since $\frac{\alpha}{\alpha-1}<0$, we have 
\begin{align}\label{231119_03}
\frac{n^{\frac{\alpha}{\alpha-1}}(1-\alpha)}{L_\alpha^{\frac{1}{\alpha-1}}\alpha^{\frac{\alpha}{\alpha-1}}}\leq \frac{1-\alpha}{L_\alpha^{\frac{1}{\alpha-1}}\alpha^{\frac{\alpha}{\alpha-1}}}=:C_{\alpha}
\end{align}
for all $n\in\na$ and \eqref{231124_03} holds true. Now, using \eqref{231124_03} and $(S3)$, we have
\begin{align*}
|\sigma_n(t,\lambda)|^2&\leq 2(|\sigma_n(t,\lambda)-\sigma(t,\lambda)|^2+|\sigma(t,\lambda)|^2)\\
&\leq 2(C_{\alpha}+C_{\sigma}(1+|\lambda|^2)),
\end{align*}
hence \eqref{231124_04}.
$v)$ We recall that for all $t\in U$, all $n\geq n_0$ and all $\lambda\in\mathbb{R}$ \eqref{231119_02} holds true where $\frac{\alpha}{\alpha-1}<0$ for $\alpha\in (0,1)$. Hence,
\begin{align*}
\lim_{n\rightarrow \infty} \sup_{t\in U}\sup_{\lambda\in \re}|\sigma_n(t,\lambda)-\sigma(t,\lambda)|\leq \lim_{n\rightarrow\infty }\frac{n^{\frac{\alpha}{\alpha-1}}(1-\alpha)}{L_\alpha^{\frac{1}{\alpha-1}}\alpha^{\frac{\alpha}{\alpha-1}}}=0.
\end{align*}
\end{proof}

From Proposition \ref{231124_p1} we get the following consequences:

\begin{cor}\label{lem_Lipschitz_B_n}
Let $\sigma:(0,T)\times\re\rightarrow\re$ satisfy (S1)-(S3), $C_{\sigma}$ be the constant given by $(S3)$, and $n_0:=\lceil \sqrt{C_\sigma}\rceil$.
Then,
\begin{itemize}
\item[$i)$] For a.e. $t\in (0,T)$, the mapping $L^2(D)\ni v\mapsto  B_n(t,v)$ is Lipschitz continuous from $L^2(D)$ to $HS(L^2(D))$ with Lipschitz constant $L_{B_n}=\sqrt{C_k}n$ for all $n\geq n_0$.
\item[$ii)$] For any $u\in L^2(\Omega,L^2(0,T;L^2(D))$ we have
\[\lim_{n\rightarrow\infty}\erww{\int_0^T\Vert B_n(t,u)-B(t,u)\Vert_{HS}^2\,dt}=0.\]
\item[$iii)$] For all $n\geq n_0$, all $v\in L^2(D)$ and a.e. $t\in (0,T)$ we have 
\[\Vert B_n(t,v)\Vert^2_{HS}\leq 2[(C_{\alpha}^2+C_{\sigma})\|k\|_{L^2(D\times D)}^2+C_\sigma C_k\Vert v\Vert_2^2)].\]
\end{itemize}
\end{cor}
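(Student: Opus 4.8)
The plan is to obtain all three assertions by repeating, with $\sigma_n$ in place of $\sigma$, the two kernel computations already carried out for $B$ in \eqref{B_bounded} and \eqref{estimateBsigma}, and then feeding in the quantitative properties of $\sigma_n$ collected in Proposition~\ref{231124_p1}. The unifying observation is that, since $B_n(t,v)$ is the Hilbert--Schmidt integral operator with kernel $\sigma_n(t,v(x))k(x,y)$, each statement reduces via Parseval's identity and Fubini's theorem (exactly as in the display preceding \eqref{B_bounded}) to a pointwise-in-$x$ statement about $\sigma_n(t,v(x))$ weighted against $\|k(x,\cdot)\|_2^2$.

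For $i)$ I would start from the identity
\[
\|B_n(t,v)-B_n(t,w)\|_{\HS}^2=\int_D|\sigma_n(t,v(x))-\sigma_n(t,w(x))|^2\|k(x,\cdot)\|_2^2\dx,
\]
obtained just as \eqref{estimateBsigma}. Invoking the Lipschitz bound of Proposition~\ref{231124_p1}~$iii)$, valid for $n\geq n_0$, the integrand is dominated by $n^2|v(x)-w(x)|^2\|k(x,\cdot)\|_2^2$, and pulling out $\operatorname*{ess\,sup}_x\|k(x,\cdot)\|_2^2\leq C_k$ yields $\|B_n(t,v)-B_n(t,w)\|_{\HS}^2\leq C_k n^2\|v-w\|_2^2$, i.e. the Lipschitz constant $\sqrt{C_k}\,n$. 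In contrast to the merely Hölder estimate \eqref{estimateBsigma} for $B$, the global Lipschitz continuity of $\sigma_n$ delivers a genuine $L^2$-to-$L^2$ bound, so no embedding constant $C(\alpha)$ enters here.

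For $iii)$ the same route as \eqref{B_bounded} gives the equality $\|B_n(t,v)\|_{\HS}^2=\int_D|\sigma_n(t,v(x))|^2\|k(x,\cdot)\|_2^2\dx$; substituting the growth bound \eqref{231124_04} and splitting into the constant part and the $|v(x)|^2$ part, then estimating $\int_D\|k(x,\cdot)\|_2^2\dx=\knorm^2$ and $\int_D|v(x)|^2\|k(x,\cdot)\|_2^2\dx\leq C_k\|v\|_2^2$, produces exactly the claimed bound $2[(C_\alpha^2+C_\sigma)\knorm^2+C_\sigma C_k\|v\|_2^2]$.

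For $ii)$ the key point is that the difference decouples from $u$ entirely: by the same kernel identity,
\[
\|B_n(t,u)-B(t,u)\|_{\HS}^2\leq\Big(\sup_{\lambda\in\re}|\sigma_n(t,\lambda)-\sigma(t,\lambda)|\Big)^2\knorm^2,
\]
so after integrating over $(0,T)$ and taking expectation one is left with the $u$-free majorant $\knorm^2\,\erww{\int_0^T(\sup_\lambda|\sigma_n(t,\lambda)-\sigma(t,\lambda)|)^2\,dt}\leq T\knorm^2(\sup_{t\in U}\sup_\lambda|\sigma_n-\sigma|)^2$. The uniform convergence of Proposition~\ref{231124_p1}~$v)$ drives this to $0$; alternatively, the uniform bound \eqref{231124_03} of Proposition~\ref{231124_p1}~$iv)$ supplies the time-integrable, $u$- and $\omega$-independent dominating constant $C_\alpha^2\knorm^2$, so dominated convergence applies. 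Either way the limit is $0$, and the integrability of $u$ is used only to make the expressions measurable. None of the three steps presents a genuine obstacle; the only point requiring mild care is the uniformity in $\lambda$ in $ii)$, which is precisely what parts $iv)$--$v)$ of Proposition~\ref{231124_p1} provide.
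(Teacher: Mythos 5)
Your proposal is correct and follows essentially the same route as the paper: all three parts are reduced via the Parseval/Fubini kernel identity to pointwise statements about $\sigma_n$, with $i)$ using the Lipschitz bound of Proposition~\ref{231124_p1}~$iii)$, $iii)$ using \eqref{231124_04}, and $ii)$ using the $u$-independent uniform bound from parts $iv)$--$v)$ (the paper writes the final majorant as $C_kT|D|\,h_n^2(r_0)$ rather than your $T\knorm^2\sup|\sigma_n-\sigma|^2$, an immaterial difference).
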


\begin{proof}
 $i)$ Recalling \eqref{estimateBsigma} and using Proposition~\ref{231124_p1}~$iii)$ for any $v,w\in L^2(D)$ and a.e. $t\in (0,T)$ we get
 \begin{align*}
\|B(t,v)-B(t,w)\|_{\HS}^2
&\leq \int_D|\sigma_n(t,v(x))-\sigma_n(t,w(x))|^{2}\int_D|k(x,y)|^2\,dy\,dx\\
&\leq C_k n^2\|v-w\|_{2}^{2}.
 \end{align*}
$ii)$ With similar arguments as in \eqref{B_bounded},  \eqref{estimateBsigma} and Proposition~\ref{231124_p1}~$v)$, we get for all $n\geq n_0$
\begin{align*}
    &\erww{\int_0^T\Vert B_n(t,u)-B(t,u)\Vert_{HS}^2\,dt}\\
    &\leq \erww{\int_0^T\int_D|\sigma_n(t,u(\omega,t,x)-\sigma(t,u(\omega,t,x))|^{2}\int_D|k(x,y)|^2\,dy\,dx}\\
&\leq C_k\erww{\int_0^T\int_D|\sigma_n(t,u(\omega,t,x)-\sigma(t,u(\omega,t,x))|^{2}\,dx\,dt}\leq C_kT|D| h_n^2(r_0)
\end{align*}
and the last term on the right-hand side is converging to $0$ for $n\rightarrow\infty$. 
$iii)$ is a direct consequence of \eqref{B_bounded} and of Proposition \ref{231124_p1}, \eqref{231124_04}.
\end{proof}

\subsection{A higher order perturbation}\label{Sec_ApprEqu}

Let $m\in\na$ be chosen such that
\begin{align}\label{embedding}
H^m_0(D)\hookrightarrow W^{1,2p}_0(D)\cap L^\infty(D).
\end{align}
For $q:=\max\{2,p,2p(p-1),p'\}$ we consider the Gelfand triple
\begin{align*}
W^{m,q}_0(D)\hookrightarrow L^2(D) \hookrightarrow W^{-m,q'}(D)
\end{align*}
and define for $n\in\na$ the operator $A_n:W^{m,q}_0(D)\rightarrow W^{-m,q'}(D)$ by
\begin{align*}
\langle A_n(u),v\rangle_{q',q}:=&\int_D a(x,u,\nabla u)\nabla v\,dx+\frac{1}{n} j(u,v)+\int_D f(u)v\,dx,\quad u,v\in\sobolevmq,
\end{align*}
where $\langle\cdot,\cdot\rangle_{q',q}$ denotes the duality bracket $\langle\cdot,\cdot\rangle_{W^{-m,q'}(D),W^{m,q}_0(D)}$ and
\begin{align*}
    j(u,v):=(u,v)_{H^m_0(D)}+\int_D\sum_{|\gamma|\leq m}|D^\gamma u|^{q-2}D^\gamma u\cdot D^\gamma v\,dx, \quad u,v\in\sobolevmq
\end{align*}
denotes the variational formulation of the maximal monotone operator associated to the Gâteaux derivative of $J:\sobolevmq\rightarrow \re$, $J(v):=\frac{1}{q}\|v\|_{\sobolevmq}^q+\halbe\|v\|_{H^m_0(D)}^2$.

For $n\in\na, n\geq n_0:=\lceil \sqrt{C_\sigma}\rceil$ we consider the approximated equation
\begin{align}\label{apprEqu}
\begin{aligned}
d u_n+A_n(u_n)\,dt&=B_n(t,u_n)\,dW_t &&\text{in }\Omega\times(0,T)\times D\\
u&=0 &&\text{on }\Omega\times(0,T)\times\partial D\\
u_n(0,\cdot)&=u_0 &&\text{in }\Omega\times D,
\end{aligned}
\end{align}

\subsection{Well-posedness of the approximated equation}

In the following we denote by $C_E$ constants arriving from embeddings and let $n_0:=\lceil \sqrt{C_\sigma}\rceil$.

\begin{lem}\label{lem_H2}
For fixed $n\in\na, n\geq n_0,$ there exists a constant $C_{\ref{lem_H2}}\in\re$ and a function $\rho:\sobolevmq\rightarrow[0,\infty)$, which is measurable, hemi-continuous, and locally bounded in $\sobolevmq$, such that for all $u,v\in\sobolevmq$, a.e. $t\in(0,T)$
\begin{align*}
-2\langle A_n(u)-A_n(v),u-v\ranglev+\|B_n(t,u)-B_n(t,v)\|_{\HS}^2\leq(C_{\ref{lem_H2}}+\rho(v))\|u-v\|_2^2.
\end{align*}
\end{lem}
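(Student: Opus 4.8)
The plan is to expand $-2\langle A_n(u)-A_n(v),u-v\ranglev$ into the three contributions coming from $a$, from the higher-order term $\frac1n j$, and from $f$, and to treat the noise term by the Lipschitz bound of Corollary~\ref{lem_Lipschitz_B_n}~$i)$, namely $\|B_n(t,u)-B_n(t,v)\|_{\HS}^2\le C_k n^2\|u-v\|_2^2$. The $f$-part is handled by Lipschitz continuity, $-2\int_D(f(u)-f(v))(u-v)\,dx\le 2L_f\|u-v\|_2^2$, and contributes together with $C_kn^2$ to the constant $C_{\ref{lem_H2}}$. The role of the higher-order term is to provide a coercive reservoir: since $J$ is convex, $v\mapsto j(v,\cdot)$ is monotone, and keeping only the quadratic part $\halbe\|\cdot\|_{H^m_0(D)}^2$ of $J$ gives $j(u,u-v)-j(v,u-v)\ge\|u-v\|_{H^m_0(D)}^2$, so that $-\tfrac2n[j(u,u-v)-j(v,u-v)]\le-\tfrac2n\|u-v\|_{H^m_0(D)}^2$. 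This negative term will be used to absorb the gradient contributions of $u-v$.

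For the $a$-part I would first split $a(x,u,\nabla u)-a(x,v,\nabla v)=[a(x,u,\nabla u)-a(x,u,\nabla v)]+[a(x,u,\nabla v)-a(x,v,\nabla v)]$. Testing the first bracket against $\nabla(u-v)$ is nonnegative by the monotonicity (A1), hence $-2\int_D[a(x,u,\nabla u)-a(x,u,\nabla v)]\cdot\nabla(u-v)\,dx\le0$ and may be discarded. For the second bracket I would use the Hölder-type continuity (A3) in the first argument to obtain, with $G_v:=C_5|\nabla v|^{p-1}+h$, the estimate $-2\int_D[a(x,u,\nabla v)-a(x,v,\nabla v)]\cdot\nabla(u-v)\,dx\le2\int_D G_v\,|u-v|\,|\nabla(u-v)|\,dx$. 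This is the only genuinely problematic term.

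The main step is to estimate this last integral. Since $v\in\sobolevmq\hookrightarrow W^{1,2p}_0(D)$ by \eqref{embedding}, one has $G_v\in L^{p'}(D)$, and since $\tfrac1{p'}+\tfrac1{2p}+\tfrac1{2p}=1$, Hölder's inequality together with Young's inequality gives $2\int_D G_v|u-v||\nabla(u-v)|\,dx\le 2\|G_v\|_{p'}\|u-v\|_{2p}\|\nabla(u-v)\|_{2p}\le\|G_v\|_{p'}\|u-v\|_{W^{1,2p}_0(D)}^2$. The crucial point is now that both copies of $u-v$ carry $W^{1,2p}_0$-regularity, which is too strong for the desired right-hand side; I would therefore invoke a Gagliardo--Nirenberg (Ehrling-type) interpolation inequality $\|w\|_{W^{1,2p}_0(D)}^2\le\eps\|w\|_{H^m_0(D)}^2+C_\eps\|w\|_2^2$, valid because $H^m_0(D)$ embeds compactly into $W^{1,2p}_0(D)$ for the chosen large $m$. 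Choosing $\eps=\eps(v):=(n\|G_v\|_{p'})^{-1}$ yields $2\int_D G_v|u-v||\nabla(u-v)|\,dx\le\tfrac1n\|u-v\|_{H^m_0(D)}^2+\rho(v)\|u-v\|_2^2$ with $\rho(v):=\|G_v\|_{p'}\,C_{\eps(v)}$. The resulting $H^m_0(D)$-term is dominated by the reservoir $-\tfrac2n\|u-v\|_{H^m_0(D)}^2$ and drops out, its net contribution being negative, while collecting the remaining terms gives the claim with $C_{\ref{lem_H2}}=2L_f+C_kn^2$.

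It remains to check the regularity of $\rho$. Since $v\mapsto\nabla v$ is continuous from $\sobolevmq$ into $L^{2p}(D)$, the map $v\mapsto\|G_v\|_{p'}$ is continuous, and as the Ehrling constant $C_\eps$ is an explicit power of $\eps$, the function $\rho(v)=\|G_v\|_{p'}C_{\eps(v)}$ is a continuous function of $\|G_v\|_{p'}$; hence $\rho$ is measurable, hemi-continuous, and locally bounded on $\sobolevmq$, as required. The main obstacle is precisely this (A3)-term: its factor $\nabla(u-v)$ cannot be controlled by $\|u-v\|_2$, so the whole argument hinges on trading it, through the $v$-dependent interpolation parameter $\eps(v)$, against the higher-order coercivity $\tfrac1n\|u-v\|_{H^m_0(D)}^2$ while leaving on the right only an $L^2$-norm weighted by the locally bounded function $\rho(v)$.
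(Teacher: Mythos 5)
Your argument is correct and follows essentially the same route as the paper: the same decomposition of $A_n$, discarding the (A1)-monotone and $q$-monotone parts, reducing the (A3) cross term via H\"older to $\|G_v\|_{p'}\|u-v\|_{2p}\|\nabla(u-v)\|_{2p}$, and then interpolating $W^{1,2p}_0$ between $H^m_0$ and $L^2$ so that the $H^m_0$ part is absorbed into the reservoir $-\tfrac{2}{n}\|u-v\|_{H^m_0}^2$ (the paper does this interpolation explicitly via $\|u-v\|_{2p}^{2p}\le\|u-v\|_\infty^{2p-2}\|u-v\|_2^2$ and Young with a fixed small $\eta$, rather than via an abstract Ehrling inequality with a $v$-dependent $\eps$). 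The only point to watch is that for the later application of the Liu--R\"ockner theorem one needs the explicit polynomial growth of $\rho$ in $\|v\|_{W^{m,q}_0}$, so the interpolation constant must be tracked as an explicit power of $\eps$ (Gagliardo--Nirenberg) rather than taken from the compactness form of Ehrling's lemma.
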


\begin{proof}
Let $u,v\in\sobolevmq$ be arbitrary and $n\in\na,n\geq n_0,$ be fixed. We know for a.e. $t\in(0,T)$
\begin{align}\label{proofH2_1}
\begin{split}
&-2\langle A_n(u)-A_n(v),u-v\ranglev+\|B_n(t,u)-B_n(t,v)\|_\HS^2\\
&=-2\int_D(a(x,u,\nabla u)-a(x,v,\nabla v))\nabla(u-v)\,dx-\frac{2}{n}\|u-v\|_{H^m_0}^2\\
&\quad-\frac{2}{n} \int_D\sum_{|\gamma|\leq m}(|D^\gamma u|^{q-2}D^\gamma u-|D^\gamma v|^{q-2}D^\gamma v)D^\gamma (u-v)\,dx\\
&\quad-2\int_D(f(u)-f(v))(u-v)\,dx+\|B_n(t,u)-B_n(t,v)\|_\HS^2.
\end{split}
\end{align}
By using (A1), (A3), and Hölder's inequality we obtain
\begin{align*}
&-2\int_D(a(x,u,\nabla u)-a(x,v,\nabla v))\nabla(u-v)\,dx\\
&=-2\int_D(a(x,u,\nabla u)-a(x,u,\nabla v))\nabla(u-v)\,dx\\
&-2\int_D(a(x,u,\nabla v)-a(x,v,\nabla v))\nabla(u-v)\,dx\\
&\leq 2\int_D(C_5|\nabla v|^{p-1}+h(x))|u-v||\nabla(u-v)|\,dx\\
&\leq 2^{p'+1}\|C_5|\nabla u|^{p-1}+h(x)\|_{p'}\|(u-v)\nabla(u-v)\|_p\\
&\leq 2^{p'+1}(C_5\|\nabla v\|_p^{p-1}+\|h\|_{p'})\|u-v\|_{2p}\|\nabla(u-v)\|_{2p}.
\end{align*}
Note that there holds $\|u-v\|_{2p}^{2p}\leq\|u-v\|_\infty^{2p-2}\|u-v\|_2^2$.
Thanks to the continuous embedding \eqref{embedding} and Young's inequality we have for $\eta>0$
\begin{align*}
&-2\int_D(a(x,u,\nabla u)-a(x,v,\nabla v))\nabla(u-v)\,dx\\
&\leq 2^{p'+1}(C_5\|\nabla v\|_p^{p-1}+\|h\|_{p'})\|u-v\|_\infty^{\frac{2p-2}{2p}}\|u-v\|_2^{\frac{1}{p}}C_E\|u-v\|_{H^m_0}\\
&\leq 2^{p'+1}(C_5\|\nabla v\|_p^{p-1}+\|h\|_{p'})C_E^{\frac{1}{p'}}\|u-v\|_{H_0^m}^{\frac{1}{p'}}\|u-v\|_2^{\frac{1}{p}}C_E\|u-v\|_{H^m_0}\\
&=K_1(\|\nabla v\|_p^{p-1}+1)\|u-v\|_2^{\frac{1}{p}}\|u-v\|_{H^m_0}^{1+\frac{1}{p'}}\\
&\leq \frac{1}{2p\eta}\left(K_1^{2p}(\|\nabla v\|_p^{p-1}+1)^{2p}\|u-v\|_2^2\right)+\frac{\eta}{(2p)'}\|u-v\|_{H_0^m}^2
\end{align*}
for a constant $K_1\geq0$ not depending on $n$. This estimate implies
\begin{align}\label{proofH2_2}
\begin{split}
&-2\int_D(a(x,u,\nabla u)-a(x,v,\nabla v))\nabla(u-v)\,dx-\frac{2}{n} \|u-v\|_{H^m_0}^2\\
&\leq \frac{2^{2p-1}}{\eta p}K_1^{2p}(1+\|\nabla v\|_p^{2p(p-1)})\|u-v\|_2^2+\left(\frac{2p-1}{2p}\eta-\frac{2}{n} \right)\|u-v\|_{H_0^m}^2.
\end{split}
\end{align}
In the following we choose $\eta>0$ small enough such that $\frac{2p-1}{2p}\eta-\frac{2}{n} <0$. Note that
\begin{align*}
\int_D\sum_{|\gamma|\leq m}(|D^\gamma u|^{q-2}D^\gamma u-|D^\gamma v|^{q-2}D^\gamma v)D^\gamma (u-v)\,dx\geq0.
\end{align*}
Therefore we obtain for a.e. $t\in(0,T)$ from \eqref{proofH2_1} by using \eqref{proofH2_2}, Corollary \ref{lem_Lipschitz_B_n} $iii)$, and the Lipschitz continuity of $f$
\begin{align*}
&-2\langle A_n(u)-A_n(v),u-v\ranglev+\|B_n(t,u)-B_n(t,v)\|_\HS^2\leq (C_{\ref{lem_H2}}+K_2\|\nabla v\|_{p}^{2p-2})\|u-v\|_2^2
\end{align*}
for constants $C_{\ref{lem_H2}}\in\re,K_2\geq 0$ both depending on $n$. Hence for $$\rho(v):=K_2\|\nabla v\|_p^{2p-2}$$ the assertion is satisfied. Note that, by using $q\geq 2p(p-1)$ and the embedding $W_0^{m,q}(D)\hookrightarrow W_0^{1,p}(D)$ we have for any $v\in\sobolevmq$
\begin{align}\label{wachstumsbed_rho}
\begin{split}
\rho(v)&=K_2\|\nabla v\|_p^{2p-2}\leq K_2 2^q(1+\|\nabla v\|_{p}^q)\leq K_2 2^q(1+C_E^q\|v\|_{W_0^{m,q}}^q).
\end{split}
\end{align}
\end{proof}

\begin{lem}\label{lem_H3}
For $n\in\na$ large enough there exist constants $C_{\ref{lem_H3}}^1,C_{\ref{lem_H3}}^2\in\re$, and $\theta\in(0,\infty)$ such that for all $u\in\sobolevmq$ and a.e. $t\in(0,T)$
\begin{align*}
-2\langle A_n(u),u\ranglev +\|B_n(t,u)\|_{\HS}^2\leq C_{\ref{lem_H3}}^1\|u\|_2^2-\theta\|u\|_{W_0^{m,q}}^q+C_{\ref{lem_H3}}^2.
\end{align*}
\end{lem}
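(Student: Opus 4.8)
The plan is to test $A_n$ against $u$ and read off the four resulting contributions, keeping track of which norm each one produces. Writing out the definition of $A_n$ and using $j(u,u)=\|u\|_{H^m_0}^2+\|u\|_{\sobolevmq}^q$ gives
\begin{align*}
-2\langle A_n(u),u\ranglev=-2\int_D a(x,u,\nabla u)\nabla u\,dx-\frac{2}{n}\|u\|_{H^m_0}^2-\frac{2}{n}\|u\|_{\sobolevmq}^q-2\int_D f(u)u\,dx.
\end{align*}
The term $-\frac{2}{n}\|u\|_{\sobolevmq}^q$ is the only one carrying the top-order norm, and it will supply the dissipative term $-\theta\|u\|_{\sobolevmq}^q$ with $\theta=\frac{2}{n}>0$. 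Everything else is of lower order and must be shown to be absorbable into this term, into $C^1_{\ref{lem_H3}}\|u\|_2^2$, or into the constant $C^2_{\ref{lem_H3}}$.

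First I would apply the coercivity bound in (A2) to the leading elliptic term to get
\[
-2\int_D a(x,u,\nabla u)\nabla u\,dx\leq-2\int_D\kappa\,dx-2C_1\|\nabla u\|_p^p+2C_2\|u\|_\nu^\nu.
\]
The crux of the argument is the subcritical growth term $2C_2\|u\|_\nu^\nu$. Since $1\leq\nu<p$, Poincaré's inequality on the bounded domain $D$ together with $\|u\|_\nu\leq C\|u\|_p$ yields $\|u\|_\nu^\nu\leq C\|\nabla u\|_p^\nu$, and Young's inequality with exponents $p/\nu$ and $(p/\nu)'$ lets me write $2C_2\|u\|_\nu^\nu\leq C_1\|\nabla u\|_p^p+C$. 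Hence $-2C_1\|\nabla u\|_p^p+2C_2\|u\|_\nu^\nu\leq-C_1\|\nabla u\|_p^p+C\leq C$, so the two $\nabla u$-terms jointly contribute only a constant. (Alternatively one could absorb $\|u\|_\nu^\nu$ directly into $-\frac{2}{n}\|u\|_{\sobolevmq}^q$ using $\nu<q$ and the embedding $\sobolevmq\hookrightarrow L^\infty(D)$ coming from \eqref{embedding}, at the cost of slightly shrinking $\theta$; either route works.)

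For the three remaining pieces the estimates are routine bookkeeping. The term $-\frac{2}{n}\|u\|_{H^m_0}^2\leq0$ is simply discarded; the Lipschitz continuity of $f$ with $f(0)=0$ gives $-2\int_D f(u)u\,dx\leq 2L_f\|u\|_2^2$; and Corollary~\ref{lem_Lipschitz_B_n}~$iii)$, valid for $n\geq n_0$, bounds the noise by $\|B_n(t,u)\|_{\HS}^2\leq 2C_\sigma C_k\|u\|_2^2+2(C_\alpha^2+C_\sigma)\knorm^2$. Collecting everything, the two $\|u\|_2^2$-contributions form $C^1_{\ref{lem_H3}}=2L_f+2C_\sigma C_k$, the constants ($-2\int_D\kappa\,dx$, the Young constant, and the noise constant) form $C^2_{\ref{lem_H3}}$, and the surviving top-order term gives $\theta=\frac{2}{n}$, which establishes the claim for all $n\geq n_0$. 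The only step requiring genuine care is the absorption of $C_2|u|^\nu$ described above; the rest is a matter of organising lower-order terms into the correct norm.
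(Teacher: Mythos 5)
Your proof is correct, and on the one step that actually matters it takes a different --- and more robust --- route than the paper. Both arguments expand $-2\langle A_n(u),u\ranglev$, drop $-\frac{2}{n}\|u\|_{H^m_0}^2\leq 0$, use the Lipschitz continuity of $f$ and Corollary~\ref{lem_Lipschitz_B_n}~$iii)$ for the noise, and keep $-\frac{2}{n}\|u\|_{\sobolevmq}^q$ as the dissipative term; the divergence is in the treatment of $C_2\|u\|_\nu^\nu$. The paper discards the coercivity term $-2C_1\|\nabla u\|_p^p$ entirely and instead estimates $\int_D|u|^\nu\,dx\leq 2^qC_E^{2\nu}C_{\text{Poin}}^\nu(1+\|u\|_{\sobolevmq}^q)$ through the chain $\sobolevmq\hookrightarrow W^{1,2p}_0(D)\hookrightarrow L^{2p}(D)$, hoping to absorb the resulting $+\|u\|_{\sobolevmq}^q$ contribution into $-\frac{2}{n}\|u\|_{\sobolevmq}^q$; but since the crude bound $a^\nu\leq 2^q(1+a^q)$ carries the fixed coefficient $2^q$ rather than an adjustable small parameter, the surviving coefficient $2^qC_2C_E^{2\nu}C_{\text{Poin}}^\nu-\frac{2}{n}$ (the paper's $\theta$) sits in front of $\|u\|_{\sobolevmq}^q$ with a plus sign for $n$ large, so the displayed chain does not actually deliver $-\theta\|u\|_{\sobolevmq}^q$ with $\theta>0$. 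Your version avoids this entirely: retaining $-2C_1\|\nabla u\|_p^p$ and absorbing $2C_2\|u\|_\nu^\nu\leq C\|\nabla u\|_p^\nu$ into it by Young's inequality with exponents $p/\nu$ and $(p/\nu)'$ --- legitimate precisely because (A2) assumes $\nu<p$ strictly --- leaves only an additive constant and an untouched $\theta=\frac{2}{n}>0$, valid for every $n\geq n_0$ with no further largeness condition. Your parenthetical alternative (Young's inequality against $-\frac{2}{n}\|u\|_{\sobolevmq}^q$ with an $n$-dependent small parameter, using $\nu<q$) also works, since the constants in the lemma are allowed to depend on $n$; it is essentially the repair the paper's own route would need.
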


\begin{proof}
Let $n\in\na,n\geq n_0$ be arbitrary, but fixed. By (A2), the Lipschitz continuity of $f$, and Corollary \ref{lem_Lipschitz_B_n} $iii)$ we obtain for all $u\in\sobolevmq$ and a.e. $t\in(0,T)$
\begin{align}\label{proofH3_1}
\begin{split}
&-2\int_D a(x,u,\nabla u)\nabla u\,dx-\frac{2}{n} \|u\|_{H_0^m}^2-\frac{2}{n} \|u\|_{W_0^{m,q}}^q  - 2\int_D f(u)u\,dx+\|B_n(t,u)\|_{\HS}^2\\
&\leq -2\int_D(\kappa(x)+C_1|\nabla u|^p-C_2|u|^\nu)\,dx-\frac{2}{n} \|u\|_{W_0^{m,q}}^q+2L_f\|u\|_2^2\\
&\quad +2(C_\alpha^2+C_\sigma)\knorm^2+ 2C_kC_\sigma\|u\|_2^2\\
&\leq 2\|\kappa\|_{1}+C_2\int_D |u|^\nu\,dx-\frac{2}{n} \|u\|_{W_0^{m,q}}^q+2(L_f+C_\sigma C_k)\|u\|_{2}^2+2(C_\alpha^2+C_\sigma)\|k\|_{L^2(D\times D)}^2.
\end{split}
\end{align}
By using the continuous embeddings $L^\nu(D)\hookrightarrow L^{2p}(D)$ and $\sobolevmq\hookrightarrow W_0^{1,2p}(D)$, Poincaré's inequality (with constant $C_{\text{Poin}}$), and the fact that $q\geq p>\nu$ we obtain
\begin{align*}
\int_D|u(x)|^\nu\,dx&\leq C_E^\nu\|u\|_{2p}^\nu\leq C_E^\nu C_{\text{Poin}}^\nu\|\nabla u\|_{2p}^\nu\leq C_E^{2\nu}C_{\text{Poin}}^\nu\|u\|_{W_0^{m,q}}^\nu\\
&\leq C_E^{2\nu}C_{\text{Poin}}^\nu 2^q\left(1+\|u\|_{W_0^{m,q}}^q\right).
\end{align*}
Consequently, we get from \eqref{proofH3_1} for all $u\in\sobolevmq$, a.e. $t\in(0,T)$, and for $n\in\na$ large enough such that $\theta:=2^qC_2C_E^{2\nu}C_{\text{Poin}}^\nu-\frac{2}{n} >0$
\begin{align*}
-2\langle A_n(u),u\ranglev +\|B_n(t,u)\|_{\HS}^2\leq C_{\ref{lem_H3}}^1\|u\|_2^2-\theta\|u\|_{W_0^{m,q}}^q+C_{\ref{lem_H3}}^2
\end{align*}
for constants $C_{\ref{lem_H3}}^1,C_{\ref{lem_H3}}^2\geq0$.
\end{proof}

\begin{lem}\label{lem_H4}
For fixed $n\in\na,n\geq n_0,$ there exist $C_{\ref{lem_H4}}^1,C_{\ref{lem_H4}}^2\in\re$ such that
\begin{align*}
\|A_n(u)\|_{W^{-m,q'}(D)}^{q'}\leq C_{\ref{lem_H4}}^1+C_{\ref{lem_H4}}^2\| u\|_{W_0^{m,q}}^q\quad\forall u\in\sobolevmq.
\end{align*}
\end{lem}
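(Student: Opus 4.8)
The plan is to bound the dual norm $\|A_n(u)\|_{W^{-m,q'}(D)}$ by testing against an arbitrary $v\in\sobolevmq$ and estimating each of the three contributions to $\langle A_n(u),v\ranglev$ separately. Recall that
\[
\langle A_n(u),v\ranglev=\int_D a(x,u,\nabla u)\nabla v\dx+\frac{1}{n}j(u,v)+\int_D f(u)v\dx,
\]
so by the definition of the dual norm it suffices to produce, for each term, a bound of the form $(\text{const}+\text{const}\cdot\|u\|_{W_0^{m,q}}^{q/q'})\|v\|_{W_0^{m,q}}$, and then raise the resulting estimate to the power $q'$ using the elementary inequality $(a+b)^{q'}\leq 2^{q'-1}(a^{q'}+b^{q'})$.

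First I would handle the leading term using the growth bound in (A2): $|a(x,u,\nabla u)|\leq C_3|\nabla u|^{p-1}+C_4|u|^{p-1}+g(x)$. By Hölder's inequality this gives
\[
\Big|\int_D a(x,u,\nabla u)\nabla v\dx\Big|\leq\big(C_3\|\nabla u\|_p^{p-1}+C_4\|u\|_p^{p-1}+\|g\|_{p'}\big)\|\nabla v\|_p,
\]
where I have used that $\||\nabla u|^{p-1}\|_{p'}=\|\nabla u\|_p^{p-1}$ and similarly for the $u$-term. I would then invoke the continuous embedding $\sobolevmq\hookrightarrow W_0^{1,p}(D)$ (valid since $q\geq p$) to replace $\|\nabla v\|_p$ by $C_E\|v\|_{W_0^{m,q}}$ and to control $\|\nabla u\|_p,\|u\|_p$ by constant multiples of $\|u\|_{W_0^{m,q}}$. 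Crucially, the exponent on $\|u\|_{W_0^{m,q}}$ coming out is $p-1$, and I must check $(p-1)q'\leq q$; this is exactly guaranteed by the choice $q\geq 2p(p-1)\geq p(p-1)$ together with $q\geq p'$, so that $q/q'=q/(q-1)$ and the exponent stays subcritical.

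Next I would treat the perturbation term $\frac1n j(u,v)$. Its first piece $(u,v)_{H^m_0(D)}$ is bounded by $\|u\|_{H^m_0}\|v\|_{H^m_0}$, and the embedding $\sobolevmq\hookrightarrow H^m_0(D)$ (since $q\geq2$) converts both factors into $W_0^{m,q}$-norms, producing a linear power of $\|u\|_{W_0^{m,q}}$. The second piece, $\int_D\sum_{|\gamma|\leq m}|D^\gamma u|^{q-2}D^\gamma u\cdot D^\gamma v\dx$, is estimated by Hölder with exponents $q'$ and $q$: each summand is at most $\||D^\gamma u|^{q-1}\|_{q'}\|D^\gamma v\|_q=\|D^\gamma u\|_q^{q-1}\|D^\gamma v\|_q$, yielding $\|u\|_{W_0^{m,q}}^{q-1}\|v\|_{W_0^{m,q}}$, and here $(q-1)q'=q$ exactly. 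The term $\int_D f(u)v\dx$ is the easiest: since $f\in L^\infty(\re)$, we have $|\int_D f(u)v\dx|\leq\|f\|_\infty\|v\|_1\leq\|f\|_\infty C_E\|v\|_{W_0^{m,q}}$, contributing only a constant (independent of $u$).

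Collecting these bounds gives $|\langle A_n(u),v\ranglev|\leq C(1+\|u\|_{W_0^{m,q}}^{q-1})\|v\|_{W_0^{m,q}}$ for all $v$, whence $\|A_n(u)\|_{W^{-m,q'}(D)}\leq C(1+\|u\|_{W_0^{m,q}}^{q-1})$; raising to the $q'$-th power and using $(q-1)q'=q$ yields the claimed inequality with suitable constants $C_{\ref{lem_H4}}^1,C_{\ref{lem_H4}}^2$ depending on $n$. The only genuinely delicate point is bookkeeping the exponents so that every power of $\|u\|_{W_0^{m,q}}$ that appears is at most $q-1$ before taking the $q'$-power; this is precisely why $q$ was defined as $\max\{2,p,2p(p-1),p'\}$, and I would make that dependence explicit at each embedding step rather than leaving it implicit.
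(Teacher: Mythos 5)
Your proposal is correct and follows essentially the same route as the paper: test $A_n(u)$ against $v\in\sobolevmq$, estimate each of the three contributions via H\"older's inequality, the growth bound in (A2), and the embedding $\sobolevmq\hookrightarrow W^{1,p}_0(D)\cap H^m_0(D)\cap L^2(D)$, then raise to the power $q'$ and use $q'(p-1)\leq q$ and $q'(q-1)=q$. The only (immaterial) deviations are that you bound the $f$-term by $\|f\|_\infty$ rather than by the Lipschitz continuity of $f$ with $f(0)=0$ as the paper does, and that your exponent check $(p-1)q'\leq q$ is most directly seen from $q\geq p$.
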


\begin{proof}
Let $n\in\na,n\geq n_0,$ be fixed. For $u,v\in\sobolevmq$ we obtain by using Hölder's inequality, (A2), and the Lipschitz continuity of $f$
\begin{align*}
\langle A_n(u),v\ranglev&\leq \|a(x,u,\nabla u)\|_{p'}\|\nabla v\|_p+\frac{1}{n} \|u\|_{H^m_0}\|v\|_{H_0^m}+\frac{1}{n} \|u\|_{W_0^{m,q}}^{q-1}\|v\|_{W^{m,q}_0}+ L_f\|u\|_2\|v\|_2\\
&\leq 2(C_3\|\nabla u\|_p^{p-1}+C_4\|u\|_p^{p-1}+\|g\|_{p'})\|\nabla v\|_p+\frac{1}{n} \|u\|_{H^m_0}\|v\|_{H_0^m}\\
&\quad+\frac{1}{n} \|u\|_{W_0^{m,q}}^{q-1}\|v\|_{W^{m,q}_0}+ L_f\|u\|_2\|v\|_2.
\end{align*}
Note that $W^{m,q}_0(D)$ is compactly embedded into $W^{1,p}_0(D)\cap H^m_0(D)$, which implies with Poincaré's inequality for any $u,v\in\sobolevmq$
\begin{align*}
\langle A_n(u),v\ranglev&\leq 2\left[C_E(C_3+C_{\text{Poin}}^{p-1}C_4)\| u\|_{W_0^{m,q}}^{p-1}+\|g\|_{p'}\right]C_E\| v\|_{W^{m,q}_0}\\
&\quad+\frac{C_E^2}{n}\|u\|_{W^{m,q}_0}\|v\|_{W_0^{m,q}}+\frac{1}{n} \|u\|_{W_0^{m,q}}^{q-1}\|v\|_{W^{m,q}_0}+ L_fC_E^2\|u\|_{W_0^{m,q}}\|v\|_{W_0^{m,q}}.
\end{align*}
Consequently there exist constants $K_3,K_4\geq0$ depending on $n$ such that
\begin{align*}
\|A_n(u)\|_{W^{-m,q'}}^{q'}\leq K_3(\|u\|_{W_0^{m,q}}^{q'(p-1)}+\|u\|_{W_0^{m,q}}^{q'}+\|u\|_{W_0^{m,q}}^q)+K_4.
\end{align*}
By definition of $q$ we know that $q'<q$ and $q'(p-1)\leq q$ and hence
\begin{align*}
\|A_n(u)\|_{W^{-m,q'}}^{q'}\leq C_{\ref{lem_H4}}^1+C_{\ref{lem_H4}}^2\|u\|_{W_0^{m,q}}^q\quad\forall u\in\sobolevmq
\end{align*}
for constants $C_{\ref{lem_H4}}^1,C_{\ref{lem_H4}}^2\in\re$ depending on $n$.
\end{proof}\\

In the following, let
\begin{align}\label{def_N0} 
N_0:=\max\left\{\lceil \sqrt{C_\sigma}\rceil, \left(2^{q-1}C_2C_E^{2\nu}C_{\text{Poin}}^\nu\right)^{-1}\right\},
\end{align}
i.e., if we choose $n>N_0$, then $n$ is large enough such that Lemma~\ref{lem_H3} holds.

\begin{prop}\label{teo_existence_appr_solution}
For any $n\in\na,n>N_0,$ there exists a unique solution $u_n$ to the approximated equation \eqref{apprEqu} in the following sense: $u_n \in L^2(\Omega;C([0,T];L^2(D)))\cap L^q(\Omega;\linebreak L^q(0,T;W^{m,q}_0(D)))$ is a $(\mathcal{F}_t)_{t\in[0,T]}$-adapted stochastic process which satisfies $u_n(0,\cdot)=u_0$ in $\Omega\times D$ and for all $t\in[0,T]$, $\mP$-a.s. in $\Omega$
\begin{align*}
    u_n(t)=u_0+\int_0^t A_n(u_n(s))\,ds+\int_0^t B_n(s,u_n(s))\,dW_s.
\end{align*}
\end{prop}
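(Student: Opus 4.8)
The plan is to verify that the operator $A_n$ together with the noise $B_n$ satisfies the standard hypotheses $(H1)$--$(H4)$ of the variational well-posedness theory for SPDEs as presented in Liu--R\"ockner \cite{LR15}, and then to invoke their existence-and-uniqueness result directly on the Gelfand triple $W^{m,q}_0(D)\hookrightarrow L^2(D)\hookrightarrow W^{-m,q'}(D)$. The four conditions to check are: hemicontinuity of $A_n$ (i.e.\ $\re\ni s\mapsto\langle A_n(u+sv),w\ranglev$ is continuous), local monotonicity combining $A_n$ and $B_n$, coercivity, and a growth bound on $\|A_n(u)\|_{W^{-m,q'}}$. The last three of these are precisely the content of Lemmas \ref{lem_H2}, \ref{lem_H3}, and \ref{lem_H4} that were just established, so the bulk of the work is already done; what remains is to assemble them in the exact form required and to dispatch hemicontinuity.

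First I would fix $n>N_0$ so that, by the choice \eqref{def_N0}, both the Lipschitz regime $n\geq n_0$ and the strict positivity $\theta>0$ in Lemma~\ref{lem_H3} hold. The coercivity condition $(H3)$ is read off from Lemma~\ref{lem_H3}: the bound
$-2\langle A_n(u),u\ranglev+\|B_n(t,u)\|_{\HS}^2\leq C^1_{\ref{lem_H3}}\|u\|_2^2-\theta\|u\|_{W_0^{m,q}}^q+C^2_{\ref{lem_H3}}$
gives coercivity in the $W^{m,q}_0(D)$-norm with exponent $q$ and constant $\theta>0$, which is exactly the form needed. The growth/boundedness condition $(H4)$, namely $\|A_n(u)\|_{W^{-m,q'}}^{q'}\leq C^1_{\ref{lem_H4}}+C^2_{\ref{lem_H4}}\|u\|_{W_0^{m,q}}^q$, is Lemma~\ref{lem_H4}. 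For the local monotonicity $(H2)$, Lemma~\ref{lem_H2} provides
$-2\langle A_n(u)-A_n(v),u-v\ranglev+\|B_n(t,u)-B_n(t,v)\|_{\HS}^2\leq(C_{\ref{lem_H2}}+\rho(v))\|u-v\|_2^2$
with $\rho$ measurable, hemicontinuous, locally bounded, and subject to the growth control \eqref{wachstumsbed_rho} $\rho(v)\leq K_22^q(1+C_E^q\|v\|_{W_0^{m,q}}^q)$; this $\rho$-dominated-by-the-$q$-norm structure is exactly what \cite{LR15} permits in its local monotonicity hypothesis.

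The one item not yet supplied is hemicontinuity $(H1)$ of $A_n$. I would verify it term by term. The principal term $u\mapsto\int_D a(x,u,\nabla u)\nabla v\,dx$ is hemicontinuous because $a$ is a Carath\'eodory function satisfying the growth bound in (A2): along $u+sw$ the integrand converges pointwise a.e.\ as $s\to0$ by continuity of $a$ in its second and third arguments, and the $L^{p'}$-growth bound yields uniform integrability so that dominated convergence applies. The perturbation term $\frac1n j(u+sw,v)$ is continuous in $s$ since each summand $|D^\gamma(u+sw)|^{q-2}D^\gamma(u+sw)$ depends continuously on $s$ with the requisite integrable domination, and the inner-product part is affine. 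The zeroth-order term $\int_D f(u+sw)v\,dx$ is continuous by continuity and boundedness of $f$. Combining these gives continuity of $s\mapsto\langle A_n(u+sw),v\ranglev$, hence $(H1)$.

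With $(H1)$--$(H4)$ in hand, I would apply the main existence-uniqueness theorem of \cite{LR15} to obtain a unique $(\mathcal{F}_t)_{t\in[0,T]}$-adapted solution $u_n\in L^2(\Omega;C([0,T];L^2(D)))\cap L^q(\Omega;L^q(0,T;W^{m,q}_0(D)))$ of the integral equation in the stated sense, with $u_n(0)=u_0$; the requisite integrability of the initial datum holds since $u_0\in L^2(\Omega;L^2(D))$. The main obstacle is less any single deep estimate than ensuring the precise matching of exponents and norms: one must confirm that the local monotonicity furnished by Lemma~\ref{lem_H2} with the $\rho$-growth \eqref{wachstumsbed_rho}, together with coercivity of order $q$, fits the exact hypothesis format of \cite{LR15} (in particular that the comparison norm on the right of Lemma~\ref{lem_H2} is the pivot-space norm $\|\cdot\|_2$ while coercivity and growth are measured in $\|\cdot\|_{W_0^{m,q}}$). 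Since the definition of $q=\max\{2,p,2p(p-1),p'\}$ and the embedding \eqref{embedding} were engineered precisely so that all these exponent inequalities ($q'<q$, $q'(p-1)\leq q$, $q\geq 2p(p-1)$) close, this verification is mechanical, and the proposition follows.
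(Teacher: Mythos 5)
Your proposal is correct and follows essentially the same route as the paper: the paper's proof simply cites Lemma~\ref{lem_H2}, Lemma~\ref{lem_H3}, and Lemma~\ref{lem_H4} together with the growth bound \eqref{wachstumsbed_rho} and invokes \cite[Theorem 5.1.3]{LR15}. Your explicit term-by-term verification of hemicontinuity is a harmless (and welcome) addition that the paper leaves implicit.
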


\begin{proof}
Using Lemma \ref{lem_H2}, Lemma \ref{lem_H3}, and Lemma \ref{lem_H4} in connection with \eqref{wachstumsbed_rho} the result follows from \cite[Theorem 5.1.3]{LR15}.
\end{proof}
\\

Since the case $\alpha=1$ is already known (see, e.g., \cite{LR17},\cite{P21}), we only consider $\alpha\in(0,1)$ in the following.

\section{Existence of a martingale solution}

\subsection{\textit{A priori} estimates}\label{Sec_A_priori}

In the following for $n\in\na,n>N_0,$ let $u_n$ be the solution function to \eqref{apprEqu} found in Proposition~\ref{teo_existence_appr_solution}, where $N_0$ is defined in \eqref{def_N0}. 

\begin{lem}\label{lemboundungradun}
There exists a constant $C_{\ref{lemboundungradun}}\geq0$ not depending on $n\in\na$ such that for all $n\in\na,n>N_0,$ and $t\in[0,T]$
\begin{align*}
&\erww{\|u_n(t)\|_2^2}+\erww{\int_0^t\|\nabla u_n(s)\|_p^p\,ds}+\frac{1}{n} \erww{\int_0^t\|u_n(s)\|_{H_0^m}^2\,ds}\\
&+\frac{1}{n} \erww{\int_0^t\|u_n(s)\|_{W_0^{m,q}}^q\,ds}\leq C_{\ref{lemboundungradun}}.
\end{align*}
\end{lem}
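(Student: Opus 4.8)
The plan is to apply the Itô formula for the square of the $L^2(D)$-norm to the variational solution $u_n$ of \eqref{apprEqu}, and then to combine the coercivity structure of $A_n$ and $B_n$ with Gronwall's lemma. Since $u_n$ is the variational solution produced by \cite[Theorem 5.1.3]{LR15} in the Gelfand triple $\sobolevmq\hookrightarrow\lz\hookrightarrow\sobolevmqs$, with $A_n(u_n)\in L^{q'}(\Omega;L^{q'}(0,T;\sobolevmqs))$ by Lemma~\ref{lem_H4} and $u_n\in L^q(\Omega;L^q(0,T;\sobolevmq))$, the Itô formula applies and yields, $\mP$-a.s.\ for every $t\in[0,T]$,
\begin{align*}
\|u_n(t)\|_2^2=\|u_0\|_2^2+\int_0^t\left(-2\langle A_n(u_n(s)),u_n(s)\ranglev+\|B_n(s,u_n(s))\|_\HS^2\right)ds+M_n(t),
\end{align*}
where $M_n(t):=2\int_0^t\langle u_n(s),B_n(s,u_n(s))\,dW_s\rangle_{L^2}$ is a local martingale. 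To remove the stochastic term I would introduce the stopping times $\tau_R:=\inf\{t\in[0,T]:\|u_n(t)\|_2\geq R\}\wedge T$; on $[0,\tau_R]$ the integrand of $M_n$ is bounded (using $\|u_n\|_2\leq R$ together with Corollary~\ref{lem_Lipschitz_B_n}~$iii)$), so $M_n(\cdot\wedge\tau_R)$ is a genuine martingale with $\erww{M_n(t\wedge\tau_R)}=0$.

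The core of the estimate is a refinement of Lemma~\ref{lem_H3} in which the coercive gradient term is retained rather than discarded. Starting from the intermediate bound \eqref{proofH3_1}, which keeps $-2C_1\|\nabla u_n\|_p^p$ and $+2C_2\int_D|u_n|^\nu\,dx$ on the right-hand side, I would control the subcritical term via Poincaré's inequality and the embedding $L^p(D)\hookrightarrow L^\nu(D)$ (valid since $\nu<p$ and $D$ is bounded), which gives $\int_D|u_n|^\nu\,dx\leq C\|\nabla u_n\|_p^\nu$, followed by Young's inequality with exponents $p/\nu$ and its conjugate to obtain $2C_2\int_D|u_n|^\nu\,dx\leq C_1\|\nabla u_n\|_p^p+C$, so that half of the gradient term survives. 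Combined with Corollary~\ref{lem_Lipschitz_B_n}~$iii)$ for the noise and the Lipschitz bound $-2\int_D f(u_n)u_n\,dx\leq 2L_f\|u_n\|_2^2$, this produces
\begin{align*}
-2\langle A_n(u_n),u_n\ranglev+\|B_n(t,u_n)\|_\HS^2\leq -C_1\|\nabla u_n\|_p^p-\frac{2}{n}\|u_n\|_{H^m_0}^2-\frac{2}{n}\|u_n\|_{W_0^{m,q}}^q+C\|u_n\|_2^2+C'.
\end{align*}
The decisive point is that $C_1,C,C'$ do \emph{not} depend on $n$, since they originate only from (A2), the Lipschitz constant $L_f$, and the $n$-independent noise bound in Corollary~\ref{lem_Lipschitz_B_n}~$iii)$.

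Inserting this into the expectation of the stopped Itô identity and moving the three negative terms to the left gives, for all $t\in[0,T]$,
\begin{align*}
&\erww{\|u_n(t\wedge\tau_R)\|_2^2}+C_1\erww{\int_0^{t\wedge\tau_R}\|\nabla u_n(s)\|_p^p\,ds}+\frac{2}{n}\erww{\int_0^{t\wedge\tau_R}\|u_n(s)\|_{W_0^{m,q}}^q\,ds}\\
&\leq\erww{\|u_0\|_2^2}+C'T+C\int_0^t\erww{\|u_n(s\wedge\tau_R)\|_2^2}\,ds
\end{align*}
(and likewise for the $H^m_0$-term). Dropping the positive integral terms and applying Gronwall's lemma to $g(t):=\erww{\|u_n(t\wedge\tau_R)\|_2^2}$ yields $g(t)\leq(\erww{\|u_0\|_2^2}+C'T)e^{CT}$ uniformly in $R$ and in $n$; letting $R\to\infty$ (with $\tau_R\to T$ $\mP$-a.s.\ by path-continuity, and Fatou's lemma) bounds $\sup_{t\in[0,T]}\erww{\|u_n(t)\|_2^2}$ independently of $n$. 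Reinserting this bound on the right-hand side then controls all three integral terms, which gives the claim after absorbing $C_1$ and the factors of $2$ into $C_{\ref{lemboundungradun}}$. I expect the only genuine difficulties to be bookkeeping: checking that every constant entering the coercivity estimate is $n$-independent, and rigorously justifying the vanishing of the martingale term, which the localization argument handles cleanly given that $u_n$ already lies in $L^2(\Omega;C([0,T];\lz))$.
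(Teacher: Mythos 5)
Your proposal is correct and follows essentially the same route as the paper: Itô's formula for $\|u_n(t)\|_2^2$, the coercivity (A2) with the subcritical term $\int_D|u_n|^\nu\dx$ absorbed into $C_1\|\nabla u_n\|_p^p$ via Young's and Poincaré's inequalities (the paper applies Young first and then Poincaré, you do it in the reverse order, which is immaterial), the $n$-independent noise bound from Corollary~\ref{lem_Lipschitz_B_n}~$iii)$, and Gronwall followed by reinsertion of the resulting bound. Your additional stopping-time localization of the martingale term is a harmless extra precaution that the paper omits, relying instead on the integrability already guaranteed by Proposition~\ref{teo_existence_appr_solution}.
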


\begin{proof}
Let $n\in\na,n>N_0,$ and $t\in[0,T]$ be arbitrary. We get from Itô's formula, $\mP$-a.s. in $\Omega$,
\begin{align*}
\|u_n(t)\|_2^2&=\|u_0\|_2^2+\int_0^t2\langle\diver a(\cdot,u_n,\nabla u_n),u_n(s)\ranglev\,ds-\frac{2}{n} \int_0^t\|u_n(s)\|_{H_0^m}^2\,ds\\
&\quad-\frac{2}{n} \int_0^t\int_D\sum_{|\gamma|\leq m}|D^\gamma u_n(s)|^q\,dx\,ds-\int_0^t\int_D 2 f(u_n(s))u_n(s)\,dx\,ds\\
&\quad+\int_0^t\|B_n(s,u_n(s))\|_{\HS}^2\, ds+2\int_0^t\langle B_n(s,u_n(s))(\cdot),u_n(s)\rangle_{L^2}\,dW_s.
\end{align*}
Taking the expectation provides by using the Lipschitz continuity of $f$ and\linebreak Corollary~\ref{lem_Lipschitz_B_n}~$iii)$
\begin{align}\label{proof_apriori_1}
\begin{split}
&\erww{\|u_n(t)\|_2^2}+2\erww{\int_0^t\int_D a(x,u_n,\nabla u_n)\nabla u_n(s)\,dx\,ds}+\frac{2}{n} \erww{\int_0^t\|u_n(s)\|_{H^m_0}^2\,ds}\\
&+\frac{2}{n} \erww{\int_0^t\|u_n(s)\|_{W_0^{m,q}}^q\,ds}\\
&\leq\erww{\|u_0\|_2^2}+2L_f\erww{\int_0^t\|u_n(s)\|_2^2\,ds}\\
&\quad +\erww{\int_0^t 2\left((C_\alpha^2+C_\sigma)\|k\|_{L^2(D\times D)}^2+C_\sigma C_k\|u_n(s)\|_2^2\right)\,ds}\\
&=\erww{\|u_0\|_2^2}+2(C_\alpha^2+C_\sigma) t\|k\|_{L^2(D\times D)}^2+2(C_\sigma C_k+L_f)\erww{\int_0^t\|u_n(s)\|_2^2\,ds}.
\end{split}
\end{align}
By using (A2) we obtain
\begin{align*}
&\erww{\|u_n(t)\|_2^2}+2\erww{\int_0^t\int_D\kappa(x)+C_1|\nabla u_n(s,x)|^p-C_2|u_n(s,x)|^\nu\,dx\,ds}\\
&\leq \erww{\|u_0\|_2^2}+2(C_\alpha^2+C_\sigma) t\knorm^2+2(C_\sigma C_k+L_f)\erww{\int_0^t\|u_n(s)\|_2^2\,ds}.
\end{align*}
Let $\eta>0$, since $\nu<p$ we can use Young's inequality in the following way:
\begin{align*}
\erww{\int_0^t\int_D|u_n(s,x)|^\nu\,dx\,ds}\leq\eta\frac{\nu}{p}\erww{\int_0^t\int_D|u_n(t,x)|^p\,dx\,ds}+\frac{1}{\eta}\frac{p}{p-\nu}t|D|.
\end{align*}
Applying Poincaré's inequality we obtain by \eqref{proof_apriori_1}
\begin{align*}
&\erww{\|u_n(t)\|_2^2}+2\left(C_1-C_2C_{\text{Poin}}^p\frac{\nu}{p}\eta\right)\erww{\int_0^t\|\nabla u_n(s)\|_p^p\,ds}\\
&\leq \erww{\|u_0\|_2^2}+\left(2(C_\alpha^2+C_\sigma)\knorm^2+2\|\kappa\|_1+2C_2\frac{p}{\eta(p-\nu)}|D|\right)T\\
&\quad+2(C_kC_\sigma+L_f)\erww{\int_0^t\|u_n(s)\|_2^2},
\end{align*}
where we can choose $\eta>0$ small enough such that $C_1-2C_2C_{\text{Poin}}^p\frac{\nu}{p}\eta>0$.
Using Gronwall's lemma we get for all $t\in[0,T]$ and $n>N_0$
\begin{align*}
\erww{\|u_n(t)\|_2^2}&\leq K_5(1+K_6Te^{K_6T})
\end{align*}
for constants $K_5,K_6\geq0$ independent of $n$ and $t$.
Consequently, we get from \eqref{proof_apriori_1} by using (A2)
\begin{align*}
&\erww{\|u_n(t)\|_2^2}+2\left(C_1-C_2C_{\text{Poin}}^p\frac{\nu}{p}\eta\right)\erww{\int_0^t\|\nabla u_n(s)\|_p^p\,ds}\\
&+\frac{2}{n} \erww{\int_0^t\|u(s)\|_{H^m_0}^2\,ds}+\frac{2}{n} \erww{\int_0^t\|u_n(s)\|_{W_0^{m,q}}^q\,ds}\\
&\leq\erww{\|u_0\|_2^2}+2\|\kappa\|_1+2(C_\alpha^2+C_\sigma) t\|k\|_{L^2(D\times D)}+2(C_\sigma C_k+L_f)TK_5\left(1+K_6Te^{K_6T}\right)
\end{align*}
for all $t\in[0,T]$ and $n>N_0$.
\end{proof}

\begin{lem}\label{lem_sup_u_n_bound}
There exists a constant $C_{\ref{lem_sup_u_n_bound}}\geq0$ not depending on $n\in\na$ such that
\begin{align*}
\erww{\sup_{t\in[0,T]}\|u_n(t)\|_2^2}\leq C_{\ref{lem_sup_u_n_bound}}\quad\forall n>N_0.
\end{align*}
\end{lem}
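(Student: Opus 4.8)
The plan is to combine Itô's formula with the Burkholder--Davis--Gundy (BDG) inequality and then absorb the resulting supremum term into the left-hand side, taking care that every constant stays independent of $n$. Starting from the same application of Itô's formula to $\|u_n(t)\|_2^2$ as in the proof of Lemma~\ref{lemboundungradun}, I would write, $\mathbb{P}$-a.s.\ in $\Omega$ and for all $t\in[0,T]$,
\begin{align*}
\|u_n(t)\|_2^2=\|u_0\|_2^2+\int_0^t\left(-2\langle A_n(u_n(s)),u_n(s)\ranglev+\|B_n(s,u_n(s))\|_\HS^2\right)ds+2M_t,
\end{align*}
where $M_t:=\int_0^t\langle B_n(s,u_n(s))(\cdot),u_n(s)\rangle_{L^2}\,dW_s$ is a real-valued martingale. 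Applying Lemma~\ref{lem_H3} to the drift and discarding the non-positive term $-\theta\|u_n(s)\|_{W_0^{m,q}}^q$, then taking the supremum over $t\in[0,T]$ and the expectation, yields
\begin{align*}
\erww{\supt\|u_n(t)\|_2^2}\leq\erww{\|u_0\|_2^2}+C_{\ref{lem_H3}}^1\erww{\int_0^T\|u_n(s)\|_2^2\,ds}+C_{\ref{lem_H3}}^2T+2\erww{\supt|M_t|}.
\end{align*}
The first three terms on the right-hand side are already bounded uniformly in $n$: the first is finite since $u_0\in L^2(\Omega;\lz)$, the second is controlled by Lemma~\ref{lemboundungradun}, and the third is a constant.

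It therefore remains to estimate the martingale term $\erww{\supt|M_t|}$, which is the main obstacle, since a careless bound would either reintroduce an $n$-dependent factor or fail to close. To handle it I would invoke the BDG inequality. Using that the operator norm is dominated by the Hilbert--Schmidt norm, the quadratic variation of $M$ satisfies
\begin{align*}
\langle M\rangle_T\leq\int_0^T\|B_n(s,u_n(s))\|_\HS^2\|u_n(s)\|_2^2\,ds\leq\left(\supt\|u_n(t)\|_2^2\right)\int_0^T\|B_n(s,u_n(s))\|_\HS^2\,ds.
\end{align*}
Hence, with the BDG constant $C_{\text{BDG}}$ and Young's inequality with a parameter $\eta>0$,
\begin{align*}
2\erww{\supt|M_t|}&\leq 2C_{\text{BDG}}\erww{\left(\supt\|u_n(t)\|_2^2\right)^{\halbe}\left(\int_0^T\|B_n(s,u_n(s))\|_\HS^2\,ds\right)^{\halbe}}\\
&\leq\eta\erww{\supt\|u_n(t)\|_2^2}+\frac{C_{\text{BDG}}^2}{\eta}\erww{\int_0^T\|B_n(s,u_n(s))\|_\HS^2\,ds}.
\end{align*}

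Choosing $\eta<1$, the first term on the right can be absorbed into the left-hand side; this step is legitimate precisely because $\erww{\supt\|u_n(t)\|_2^2}$ is already known to be finite for each fixed $n$ from Proposition~\ref{teo_existence_appr_solution}, so that the whole content of the lemma is really the \emph{uniformity} in $n$ of the bound. Finally, by Corollary~\ref{lem_Lipschitz_B_n}~$iii)$ — whose constants $C_\alpha,C_\sigma,C_k$ do not depend on $n$ — together with Lemma~\ref{lemboundungradun}, the remaining term $\erww{\int_0^T\|B_n(s,u_n(s))\|_\HS^2\,ds}$ is bounded uniformly in $n$. Collecting the estimates closes the argument and produces the desired $n$-independent constant $C_{\ref{lem_sup_u_n_bound}}$. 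The delicate point throughout is exactly to keep all constants independent of $n$, which is guaranteed because both the Hilbert--Schmidt bound of Corollary~\ref{lem_Lipschitz_B_n}~$iii)$ and the a priori bound of Lemma~\ref{lemboundungradun} are uniform in $n$.
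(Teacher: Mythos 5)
Your proof is correct and follows essentially the same route as the paper: Itô's formula, a drift estimate, the Burkholder--Davis--Gundy inequality combined with Young's inequality to absorb $\erww{\supt\|u_n(t)\|_2^2}$ into the left-hand side, and then Lemma~\ref{lemboundungradun} together with Corollary~\ref{lem_Lipschitz_B_n}~$iii)$ for the remaining terms. The only cosmetic difference is that you package the drift bound via Lemma~\ref{lem_H3} (discarding the $-\theta\|u_n\|_{W_0^{m,q}}^q$ term), whereas the paper re-derives it directly from (A2) and controls $\int_0^t\|u_n(s)\|_\nu^\nu\,ds$ through the $L^p$-gradient bound of Lemma~\ref{lemboundungradun}; both yield the same $n$-independent constant.
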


\begin{proof}
Let $n\in\na,n>N_0,$ be arbitrary. Using Itô's formula, (A2), and Corollary~\ref{lem_Lipschitz_B_n}~$iii)$ we get for all $t\in[0,T]$, $\mP$-a.s. in $\Omega$,
\begin{align*}
\|u_n(t)\|_2^2&\leq\|u_0\|_2^2-2\int_0^t\int_D a(x,u_n,\nabla u_n)\nabla u_n(s)\,dx\,ds\\
&\quad-2\int_0^t\int_Df(u_n(s))u_n(s)\,dx\,ds+\int_0^t\|B_n(s,u_n(s))\|_{\HS}^2\,ds\\
&\quad+2\int_0^t\langle B_n(s,u_n(s))(\cdot),u_n(s)\rangle_{L^2}\,dW_s\\
&\leq \|u_0\|_2^2+2\|\kappa\|_1+C_2\int_0^t\int_D|u_n(s,x)|^\nu\,dx\,ds+2(L_f+C_\sigma C_k)\int_0^t \|u_n(s)\|_2^2\,ds\\
&\quad +2(C_\alpha^2+C_\sigma)\|k\|_{L^2(D\times D)}^2t+2\int_0^t(B_n\left(s,u_n(s))(\cdot),u_n(s)\right)_{L^2}\,dW_s.
\end{align*}
Since $\nu<p$ and therefore $L^p(D)\hookrightarrow L^\nu(D)$, we obtain by applying Poincaré's inequality (with constant $C_{\text{Poin}}$) for all $t\in[0,T]$, $\mP$-a.s. in $\Omega$,
\begin{align*}
\int_0^t\|u_n(s)\|_\nu^\nu\,ds&\leq C_E^\nu\int_0^t\|u_n(s)\|_p^\nu\leq C_E^\nu C_{\text{Poin}}^\nu\int_0^t\|\nabla u_n(s)\|_p^\nu\,ds\\
&\leq C_E^\nu C_{\text{Poin}}^\nu 2^p\int_0^t\left(1+\|\nabla u_n(s)\|_p^p\right)\,ds.
\end{align*}
Taking first the supremum over all $t\in[0,T]$ and then the expectation provides
\begin{align}\label{eqhelpsupun}
\begin{aligned}
&\erww{\sup_{t\in[0,T]}\|u_n(t)\|_2^2}\\
&\leq\erww{\|u_0\|_2^2}+2\|\kappa\|_1T+C_2C_E^\nu C_{\text{Poin}}^\nu\erww{\int_0^T\|\nabla u_n(s)\|_p^p\,ds}\\
&\quad+2(L_f+C_\sigma C_k)\erww{\int_0^T\|u_n(s)\|_2^2\,ds}+(2(C_\alpha^2+C_\sigma)\|k\|_{L^2(D\times D)}^2+C_2C_E^\nu C_{\text{Poin}}^\nu)T\\
&\quad+2\erww{\sup_{t\in[0,T]}\left|\int_0^t\langle B_n(s,u_n(s))(\cdot),u_n(s)\rangle_{L^2}\,dW_s\right|}.
\end{aligned}
\end{align}
Using the Burkholder-Davis-Gundy inequality (see \cite[Thm. 1.1.7]{LR17}) and Young's inequality with $\beta>0$ such that $1-2C_{BDG}\beta>0$ we get
\begin{align*}
&\erww{\sup_{t\in[0,T]}\left|\int_0^t\langle B_n(s,u_n(s))(\cdot),u_n(s)\rangle_{L^2}\,dW_s\right|}\\
&\leq C_{BDG}\erww{\left(\int_0^T|\langle B_n(s,u_n(s))(\cdot),u_n(s)\rangle_{L^2}|^2 ds\right)^\halbe}\\
&\leq C_{BGD}\erww{\left(\sup_{t\in[0,T]}\|u_n(t)\|_2^2\int_0^T\|B_n(s,u_n(s))\|_{\HS}^2\,ds\right)^\halbe}\\
&\leq C_{BDG}\erww{\beta\supt\|u_n(t)\|_2^2}+C_{BDG}\erww{\beta^{-1}\int_0^T\|B_n(s,u_n(s))\|_\HS^2\,ds}.
\end{align*}
Using this inequality in \eqref{eqhelpsupun} implies
\begin{align*}
&(1-2C_{BDG}\beta)\erww{\supt\|u_n(t)\|_2^2}\\
&\leq C_2C_E^\nu C_{\text{Poin}}^\nu \erww{\int_0^T\|\nabla u_n(s)\|_p^p\,ds} + 2(L_f+C_\sigma C_k)\erww{\int_0^T\|u_n(s)\|_2^2\,ds}\\
&\quad+2C_{BDG}\beta^{-1}\erww{\int_0^T\|B_n(s,u_n(s))\|_\HS^2\,ds}+K_7
\end{align*}
for a constant $K_7\geq0$ not depending on $n$ and $t$. The assertion follows from Lemma~\ref{lemboundungradun} and Corollary~\ref{lem_Lipschitz_B_n} $iii)$.
\end{proof}

\begin{lem}\label{a(u_n)_bounded}
The sequence
\begin{align*}
(a(\cdot,u_n,\nabla u_n))_{n>N_0} \text{ is bounded in }L^{p'}(\Omega;L^{p'}(0,T;L^{p'}(D))).
\end{align*}
\end{lem}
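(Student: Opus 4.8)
The plan is to bound the sequence $(a(\cdot,u_n,\nabla u_n))_{n>N_0}$ in $L^{p'}(\otr)$ directly using the growth condition (A2) and the a priori bounds already established in Lemma~\ref{lemboundungradun}. The key observation is that the second estimate in (A2), namely $|a(x,\lambda,\xi)|\leq C_3|\xi|^{p-1}+C_4|\lambda|^{p-1}+g(x)$, converts a bound on $\nabla u_n$ into a bound on $a(\cdot,u_n,\nabla u_n)$ once we raise to the power $p'$ and integrate.

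First I would apply (A2) pointwise with $\lambda=u_n(s,x)$ and $\xi=\nabla u_n(s,x)$, then take the $p'$-th power. Using the elementary inequality $(\alpha+\beta+\gamma)^{p'}\leq 3^{p'-1}(\alpha^{p'}+\beta^{p'}+\gamma^{p'})$, this yields
\begin{align*}
|a(x,u_n,\nabla u_n)|^{p'}\leq 3^{p'-1}\left(C_3^{p'}|\nabla u_n|^{(p-1)p'}+C_4^{p'}|u_n|^{(p-1)p'}+g(x)^{p'}\right).
\end{align*}
Since $p'=\frac{p}{p-1}$, we have $(p-1)p'=p$, so the first two terms are exactly $|\nabla u_n|^p$ and $|u_n|^p$. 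Integrating over $\otr$ and taking expectation gives
\begin{align*}
\erww{\int_0^T\|a(\cdot,u_n,\nabla u_n)\|_{p'}^{p'}\,dt}\leq 3^{p'-1}\left(C_3^{p'}\erww{\int_0^T\|\nabla u_n\|_p^p\,dt}+C_4^{p'}\erww{\int_0^T\|u_n\|_p^p\,dt}+T\|g\|_{p'}^{p'}\right).
\end{align*}

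The first term on the right is bounded uniformly in $n$ by Lemma~\ref{lemboundungradun}. For the middle term I would invoke Poincaré's inequality to estimate $\|u_n\|_p^p\leq C_{\text{Poin}}^p\|\nabla u_n\|_p^p$, which again reduces to the gradient bound from Lemma~\ref{lemboundungradun}; the last term is a harmless constant. Since all constants are independent of $n$, this establishes the claimed uniform bound.

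There is no serious obstacle here: the entire argument is a routine application of the growth condition (A2) combined with the already-proven a priori estimate. The only mild point requiring care is ensuring that the exponent arithmetic $(p-1)p'=p$ is used correctly and that one controls the $L^p$-norm of $u_n$ (rather than merely its $L^2$-norm) via Poincaré and the gradient bound, which is legitimate since $u_n\in W^{1,p}_0(D)$ and $D$ is bounded.
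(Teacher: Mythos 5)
Your argument is correct and matches the paper's proof, which likewise derives the bound from the growth estimate in (A2) combined with Poincar\'e's inequality and the \emph{a priori} gradient bound of Lemma~\ref{lemboundungradun}; the exponent arithmetic $(p-1)p'=p$ and the control of $\|u_n\|_p$ via $\|\nabla u_n\|_p$ are exactly the points the paper leaves implicit. No gaps.
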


\begin{proof}
The boundedness follows from (A2), Poincarés inequality and Lemma \ref{lemboundungradun}.
\end{proof}

\begin{lem}\label{lem_bound_derivative_u_n-B_n}
The sequence
\begin{align*}
\left(\partial_t\left(u_n(t)-\int_0^tB_n(s,u_n(s))dW_s\right)\right)_{n>N_0}
\end{align*}
is bounded in $L^{q'}(\Omega;L^{q'}(0,T;W^{-m,q'}(D)))$.
\end{lem}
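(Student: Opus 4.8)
The plan is to recognise the time derivative appearing in the statement as the operator $A_n$ evaluated at $u_n$, and then to bound $(A_n(u_n))_{n>N_0}$ uniformly in $L^{q'}(\Omega;L^{q'}(0,T;\sobolevmqs))$. By Proposition~\ref{teo_existence_appr_solution} we have, $\mP$-a.s. and for every $t\in[0,T]$, the identity
\[
u_n(t)-\int_0^t B_n(s,u_n(s))\,dW_s=u_0+\int_0^t A_n(u_n(s))\,ds
\]
in $\sobolevmqs$. Since $u_0$ does not depend on $t$ and the right-hand side is absolutely continuous with values in $\sobolevmqs$, its distributional time derivative coincides (up to sign, which is immaterial for the bound) with $A_n(u_n(t))$ for a.e.\ $t$. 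Hence it suffices to establish a uniform bound on $A_n(u_n)$.

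A direct appeal to Lemma~\ref{lem_H4} is not enough, because the constants there depend on $n$ and, by Lemma~\ref{lemboundungradun}, only the weighted quantity $\frac{1}{n}\erww{\int_0^T\|u_n(s)\|_{\sobolevmq}^q\,ds}$ is controlled uniformly in $n$, not $\erww{\int_0^T\|u_n(s)\|_{\sobolevmq}^q\,ds}$ itself. I therefore split $A_n=\mathcal{A}_0+\frac{1}{n}J'$, where $\langle\mathcal{A}_0(u),v\ranglev:=\int_D a(x,u,\nabla u)\cdot\nabla v\,dx+\int_D f(u)v\,dx$ collects the $n$-independent contributions and $\langle J'(u),v\ranglev:=(u,v)_{H^m_0(D)}+\int_D\sum_{|\gamma|\leq m}|D^\gamma u|^{q-2}D^\gamma u\cdot D^\gamma v\,dx$. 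For $\mathcal{A}_0$, Hölder's inequality together with the embedding $\sobolevmq\hookrightarrow W^{1,p}_0(D)$ gives $\|\mathcal{A}_0(u_n)\|_{\sobolevmqs}\leq C_E\|a(\cdot,u_n,\nabla u_n)\|_{p'}+C\|f(u_n)\|_2$. As $f\in L^\infty(\re)$, the term $\|f(u_n)\|_2$ is bounded deterministically, and by Lemma~\ref{a(u_n)_bounded} the family $(a(\cdot,u_n,\nabla u_n))$ is bounded in $L^{p'}(\Omega;L^{p'}(0,T;L^{p'}(D)))$; thus $(\mathcal{A}_0(u_n))_{n}$ is bounded in $L^{p'}(\Omega;L^{p'}(0,T;\sobolevmqs))$. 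Since $q\geq p$ forces $q'\leq p'$ and both $(0,T)$ and $\Omega$ carry finite measures, this space embeds continuously into $L^{q'}(\Omega;L^{q'}(0,T;\sobolevmqs))$, giving the uniform bound for the $\mathcal{A}_0$-part.

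The remaining part is where the factor $\frac{1}{n}$ is decisive. Using $\sobolevmq\hookrightarrow H^m_0(D)$ (valid since $q\geq2$) and the identity $(q-1)q'=q$, duality yields $\|\frac{1}{n}J'(u_n)\|_{\sobolevmqs}\leq\frac{C}{n}(\|u_n\|_{\sobolevmq}^{q-1}+\|u_n\|_{\sobolevmq})$. Raising to the power $q'$, exploiting $q'(q-1)=q$, and applying Young's inequality (legitimate as $q'<q$), I obtain
\[
\erww{\int_0^T\Big\|\tfrac{1}{n}J'(u_n(s))\Big\|_{\sobolevmqs}^{q'}\,ds}\leq\frac{C}{n^{q'}}\erww{\int_0^T\big(1+\|u_n(s)\|_{\sobolevmq}^q\big)\,ds}.
\]
Factoring $\frac{1}{n^{q'}}=\frac{1}{n^{q'-1}}\cdot\frac{1}{n}$ and inserting the uniform bound $\frac{1}{n}\erww{\int_0^T\|u_n(s)\|_{\sobolevmq}^q\,ds}\leq C_{\ref{lemboundungradun}}$ from Lemma~\ref{lemboundungradun}, the right-hand side is $O(n^{-(q'-1)})$, hence bounded (in fact vanishing as $n\to\infty$, since $q'>1$). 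Adding the two contributions proves the assertion.

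I expect the crux to be exactly this last point: Lemma~\ref{lem_H4} cannot be used off the shelf because of its $n$-dependent constants, and uniform control is recovered only by playing the weight $\frac{1}{n}$ built into the a priori estimate of Lemma~\ref{lemboundungradun} against the exponent identity $q'(q-1)=q$, which forces the singular-perturbation term to stay bounded while the genuinely nonlinear part $\mathcal{A}_0$ is handled separately in the natural $L^{p'}$-scale.
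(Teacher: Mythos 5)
Your proposal is correct and takes essentially the same route as the paper's proof: identify the time derivative with $A_n(u_n)$ (up to sign), bound the $a$- and $f$-contributions in the dual norm via Lemma~\ref{a(u_n)_bounded} and the a priori $L^2$ estimate, and absorb the $\tfrac{1}{n}$-weighted higher-order terms by playing the exponent identity $q'(q-1)=q$ against the uniform bound on $\tfrac{1}{n}\erww{\int_0^T\|u_n\|_{\sobolevmq}^q\,dt}$ from Lemma~\ref{lemboundungradun}. The only cosmetic difference is that you split $A_n$ into an $n$-independent part and the singular perturbation before estimating, whereas the paper estimates all terms of the tested equation at once; the lemmas invoked and the decisive use of the built-in weight $\tfrac{1}{n}$ are identical.
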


\begin{proof}
Let $n\in\na,n>N_0,$ be arbitrary. Since $u_n\in L^2(\Omega;C([0,T];L^2(D)))\cap L^q(\Omega;\linebreak L^q(0,T;W_0^{m,q}(D)))$ is a solution to \eqref{apprEqu} by Proposition~\ref{teo_existence_appr_solution} we get for $t\in[0,T],\varphi\in\sobolevmq$, $\mP$-a.s. in $\Omega$
\begin{align*}
&\langle \partial_t\left( u_n(t)-\int_0^tB_n(s,u_n(s))\,dW_s\right),\varphi\ranglev\\
&=-\int_D a(x,u_n(t),\nabla u_n(t))\nabla \varphi\,dx-\frac{1}{n} (u_n(t),\varphi)_{H_0^m}\\
&\quad-\frac{1}{n} \int_D\sum_{|\gamma|\leq m}|D^\gamma u_n(t)|^{q-2}D^\gamma u_n(t)\cdot D^\gamma\varphi\,dx 
-\int_Df (u_n(t))\varphi\,dx\\
&\leq\|a(\cdot,u_n(t),\nabla u_n(t))\|_{p'}\|\nabla\varphi\|_p+\frac{1}{n} \|u_n(t)\|_{H_0^m}\|\varphi\|_{H_0^m}+\frac{1}{n} \|u_n(t)\|_{W_0^{m,q}}^{q-1}\|\varphi\|_{W_0^{m,q}}\\
&\quad+L_f\|u_n(t)\|_2\|\varphi\|_2.
\end{align*}
Since we have the continuous embedding $\sobolevmq\hookrightarrow W_0^{1,p}(D)\cap H_0^m(D)\cap L^2(D)$, we know that there exists a constant $C_E\geq0$ such that
\begin{align*}
(\|\varphi\|_{H_0^m}+\|\nabla\varphi\|_p+\|\varphi\|_2)\leq C_E\|\varphi\|_{W^{m,q}_0}.
\end{align*}
Hence we obtain by taking the supremum over all $\varphi\in\sobolevmq$ with $\|\varphi\|_{\sobolevmq}=1$
\begin{align*}
&\left\|\partial_t\left(u_n(t)-\int_0^tB_n(s,u_n(s))\,dW_s\right)\right\|_{W^{-m,q'}}\\
&\leq C_E\|a(x,u_n(t),\nabla u_n(t))\|_{p'}+\frac{C_E}{n} \|u_n(t)\|_{H_0^m}+\frac{1}{n} \|u_n(t)\|_{W^{m,q}_0}^{q-1}+C_EL_f\|u_n(t)\|_2\\
&\leq C_E\|a(x,u_n(t),\nabla u_n(t))\|_{p'}+\frac{C_E^2}{n}\|u_n(t)\|_{W_0^{m,q}}+\frac{1}{n} \|u_n(t)\|_{W_0^{m,q}}^{q-1}+C_E L_f\|u_n(t)\|_2\\
&\leq C_E\|a(x,u_n(t),\nabla u_n(t))\|_{p'} +2^{q-1}\frac{C_E^2}{n} +\frac{1}{n} (2^{q-1}C_E^2+1)\|u_n(t)\|_{W_0^{m,q}}^{q-1}+C_EL_f\|u_n(t)\|_2
\end{align*}
for all $t\in[0,T]$, $\mP$-a.s. in $\Omega$, where we used the fact that $q\geq2$. Because $q'\leq \min\{p',2\}$, there holds for all $t\in[0,T]$, $\mP$-a.s. in $\Omega$,
\begin{align*}
&\left\|\partial_t\left(u_n(t)-\int_0^tB_n(s,u_n(s))\,dW_s\right)\right\|_{W^{-m,q'}}^{q'}\\
&\leq 8^{q'}\Big(C_E^{q'}\|a(x,u_n(t),\nabla u_n(t)\|_{p'}^{q'}+2^q\left(\frac{C_E^2}{n}\right)^{q'}\\
&\quad+\frac{1}{n^{q'}}(2^{q-1}C_E^2+1)^{q'}\|u_n(t)\|_{W_0^{m,q}}^{q}+C_E^{q'}L_f^{q'}\|u_n(t)\|_2^{q'} \Big)\\
&\leq 8^{q'}\Big(2^{q'}C_E^{q'}(1+\|a(x,u_n(t),\nabla u_n(t)\|_{p'}^{p'})+2^qC_E^{2q'}\\
&\quad+\frac{1}{n} (2^{q-1}C_E^2+1)^{q'}\|u_n(t)\|_{W_0^{m,q}}^q+2C_E^{q'}L_f^{q'}(1+\|u_n(t)\|_2^2)\Big).
\end{align*}
Integrating on $[0,T]$ and taking the expectation provide the boundedness by Lemma~\ref{lemboundungradun} and Lemma~\ref{a(u_n)_bounded}.
\end{proof}

\begin{lem}\label{lem_bound_intB_n}
The sequence $(B_n(\cdot,u_n))_{n>N_0}$ is bounded in $L^2(\Omega;L^2(0,T;\HS(L^2(D))))$ and
$$\left(\int_0^\cdot B_n(s,u_n(s))\,dW_s\right)_{n>N_0}\text{ is bounded in }L^2(\Omega;C([0,T];\lz)),$$
both for a constant $C_{\ref{lem_bound_intB_n}}\geq0$ not depending on $n$.
\end{lem}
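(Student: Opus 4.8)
The plan is to establish the two boundedness claims in sequence, the first being essentially a direct consequence of the a priori estimate from Lemma~\ref{lemboundungradun}, and the second following from the first via the Itô isometry in its $L^2$-in-time form together with the Burkholder--Davis--Gundy inequality.

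For the first claim, I would start from the pointwise-in-$(t,\omega)$ bound on the Hilbert--Schmidt norm provided by Corollary~\ref{lem_Lipschitz_B_n}~$iii)$, namely
\[
\|B_n(t,u_n(t))\|_{\HS}^2\leq 2\left[(C_\alpha^2+C_\sigma)\|k\|_{L^2(D\times D)}^2+C_\sigma C_k\|u_n(t)\|_2^2\right].
\]
Integrating this over $(0,T)$ and taking the expectation, the constant term contributes $2(C_\alpha^2+C_\sigma)\|k\|_{L^2(D\times D)}^2\,T$, while the $\|u_n(t)\|_2^2$ term is controlled by Lemma~\ref{lemboundungradun} (or more simply by Lemma~\ref{lem_sup_u_n_bound}), which gives a bound on $\erww{\int_0^T\|u_n(s)\|_2^2\,ds}$ uniform in $n$. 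Hence
\[
\erww{\int_0^T\|B_n(s,u_n(s))\|_{\HS}^2\,ds}\leq 2(C_\alpha^2+C_\sigma)\|k\|_{L^2(D\times D)}^2\,T+2C_\sigma C_k\,T\,C_{\ref{lem_sup_u_n_bound}},
\]
which is the desired bound in $L^2(\Omega;L^2(0,T;\HS(L^2(D))))$ with a constant independent of $n$.

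For the second claim, I would apply the Burkholder--Davis--Gundy inequality to the continuous martingale $t\mapsto\int_0^t B_n(s,u_n(s))\,dW_s$ in the Hilbert space $L^2(D)$, which yields
\[
\erww{\sup_{t\in[0,T]}\Bigl\|\int_0^t B_n(s,u_n(s))\,dW_s\Bigr\|_2^2}\leq C_{BDG}\,\erww{\int_0^T\|B_n(s,u_n(s))\|_{\HS}^2\,ds}.
\]
The right-hand side is exactly what the first claim just bounded, so the stochastic integral process is bounded in $L^2(\Omega;C([0,T];L^2(D)))$ with a constant $C_{\ref{lem_bound_intB_n}}$ depending only on $C_{BDG}$ and the already-established bounds, hence not on $n$. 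I would take $C_{\ref{lem_bound_intB_n}}$ to be the larger of the two resulting constants so that it serves both assertions simultaneously.

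\textbf{Main obstacle.} There is no serious difficulty here; the lemma is a routine combination of the uniform Hilbert--Schmidt bound from Corollary~\ref{lem_Lipschitz_B_n}~$iii)$, the $L^2$ energy estimate, and BDG. The only point requiring a little care is invoking BDG in the correct Hilbert-space-valued form for a $Q$-Wiener-driven stochastic integral (so that the quadratic variation is controlled by the time-integral of the squared Hilbert--Schmidt norm rather than an operator norm); this is exactly the version cited as \cite[Thm.~1.1.7]{LR17} and used already in the proof of Lemma~\ref{lem_sup_u_n_bound}, so I would simply reuse it.
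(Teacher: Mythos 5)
Your proposal is correct and follows essentially the same route as the paper: the uniform Hilbert--Schmidt bound from Corollary~\ref{lem_Lipschitz_B_n}~$iii)$ combined with the energy estimate of Lemma~\ref{lemboundungradun} gives the first claim, and the Burkholder--Davis--Gundy inequality then yields the second. The paper merely writes the two steps as a single chain of inequalities, so there is no substantive difference.
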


\begin{proof}
By Burkholder-Davis-Gundy inequality and Corollary \ref{lem_Lipschitz_B_n} $iii)$ we obtain for any $n\in\na,n>N_0,$
\begin{align*}
&\erww{\supt\left\|\int_0^t B_n(s,u_n(s))dW_s\right\|_2^2}\\
&\leq C_{BDG}\erww{\int_0^T\|B_n(s,u_n(s))\|_\HS^2\, ds}\\
&\leq C_{BDG}2\left((C_\alpha^2+C_\sigma)\knorm^2 T+C_\sigma C_k\erww{\int_0^T\|u_n(s)\|_2^2\,ds}\right)
\end{align*}
and by Lemma \ref{lemboundungradun} this expression is bounded.
\end{proof}

\subsection{Tightness results}\label{Sec_Tightness}

\begin{lem}\label{lem_undelta-intBn_tight}
The sequence
$$\left(u_n-\int_0^\cdot B_n(s,u_n(s))\,dW_s\right)_{n>N_0}$$
is bounded in $L^{q'}(\Omega;W^{\beta,2}(0,T;W^{-m,q'}(D)))$ for all $\beta\in(0,\halbe)$.
\end{lem}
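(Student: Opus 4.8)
The plan is to recognize the process as the primitive of its own time-derivative and then upgrade its integrability in the time variable by a fractional Sobolev embedding. Write $v_n:=u_n-\int_0^\cdot B_n(s,u_n(s))\,dW_s$. By Proposition~\ref{teo_existence_appr_solution}, $v_n(t)=u_0+\int_0^t A_n(u_n(s))\,ds$ for all $t\in[0,T]$, $\mP$-a.s. in $\Omega$, so that $v_n$ is absolutely continuous as a $\sobolevmqs$-valued map with $\partial_t v_n=A_n(u_n)$ and $v_n(0)=u_0$. The whole statement thus reduces to a deterministic embedding for the pathwise primitive, applied $\omega$-wise and then integrated over $\Omega$.

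First I would verify that $(v_n)_{n>N_0}$ is bounded in $L^{q'}(\Omega;W^{1,q'}(0,T;\sobolevmqs))$. The bound on the time-derivative is precisely Lemma~\ref{lem_bound_derivative_u_n-B_n}, which gives $(\partial_t v_n)$ bounded in $L^{q'}(\Omega;L^{q'}(0,T;\sobolevmqs))$. For $v_n$ itself, Lemma~\ref{lem_sup_u_n_bound} and Lemma~\ref{lem_bound_intB_n} yield boundedness in $L^2(\Omega;C([0,T];\lz))$; since the left leg $\sobolevmq\hookrightarrow\lz$ of the Gelfand triple dualizes to $\lz\hookrightarrow\sobolevmqs$, and since $L^2(\Omega)\hookrightarrow L^{q'}(\Omega)$ because $\Omega$ has finite measure and $q'\leq 2$, this transfers to an $L^{q'}(\Omega;L^{q'}(0,T;\sobolevmqs))$-bound for $v_n$. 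Together these give the desired uniform $W^{1,q'}$-in-time bound.

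The decisive step is the embedding $W^{1,q'}(0,T;\sobolevmqs)\hookrightarrow W^{\beta,2}(0,T;\sobolevmqs)$ for every $\beta\in(0,\halbe)$, with a constant depending only on $\beta,q',T$ and \emph{not} on $n$. This is exactly where the choice $q=\max\{2,p,2p(p-1),p'\}\geq 2$, hence $q'\leq 2$, is essential: the target space has strictly higher time-integrability than the source, which is compensated only by the presence of the full time-derivative. Using $\|v_n(t)-v_n(s)\|_{\sobolevmqs}\leq\int_{s\wedge t}^{s\vee t}\|\partial_t v_n(r)\|_{\sobolevmqs}\,dr$, the Gagliardo seminorm reduces to the scalar inequality $\int_0^T\int_0^T|t-s|^{-1-2\beta}\big(\int_{s\wedge t}^{s\vee t}\phi(r)\,dr\big)^2\,ds\,dt\leq C\|\phi\|_{L^{q'}(0,T)}^2$ for $\phi\in L^{q'}(0,T)$, which is the standard scalar fractional Sobolev embedding $W^{1,q'}(0,T)\hookrightarrow W^{\beta,2}(0,T)$; it is valid for $q'>1$ and all $\beta<\halbe$, since the Sobolev condition $1-\frac{1}{q'}\geq\beta-\halbe$ amounts to $\beta\leq \frac{3}{2}-\frac{1}{q'}$ and $\frac{3}{2}-\frac{1}{q'}>\halbe$ whenever $q'>1$. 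Combining this with the $W^{1,q'}$-bound from the previous paragraph yields the assertion.

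The main obstacle is precisely obtaining the sharp range $\beta<\halbe$ in this embedding rather than a suboptimal one. A naive application of Hölder's inequality, bounding the local mass $\int_{s\wedge t}^{s\vee t}|\phi|$ by $|t-s|^{1/q}\|\phi\|_{L^{q'}(0,T)}$, only makes the double integral converge for $\beta<\frac{1}{q}$, which is insufficient when $q>2$. Recovering the full range requires keeping the \emph{local} $L^{q'}$-mass and exploiting its interaction with the distance weight near the endpoints of the interval (equivalently, the sharp Gagliardo-seminorm characterization of fractional Sobolev spaces). I would therefore invoke the sharp scalar embedding and carefully track that its constant is independent of $n$, so that the uniform bound survives the passage through the embedding.
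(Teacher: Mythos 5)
Your proposal is correct and follows essentially the same route as the paper: both establish that $u_n-\int_0^\cdot B_n(s,u_n(s))\,dW_s$ has time derivative $A_n(u_n)$ bounded in $L^{q'}(\Omega;L^{q'}(0,T;\sobolevmqs))$ (Lemma~\ref{lem_bound_derivative_u_n-B_n}) together with a bound on the process itself, and then pass through the embedding of the resulting $W^{1,q'}$-in-time space into $W^{\beta,2}(0,T;\sobolevmqs)$ for $\beta<\halbe$, which the paper simply cites from \cite[Corollary 19]{S90} while you sketch its proof via the scalar fractional Sobolev embedding. Your observation that the naive H\"older estimate only yields $\beta<\nicefrac{1}{q}$ and that the sharp condition $\beta\leq\frac{3}{2}-\frac{1}{q'}$ is what makes the full range $\beta<\halbe$ available is accurate and is exactly the content of the cited result.
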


\begin{proof}
We know that 
\begin{align*}
\mathfrak{V}:=\{v\in L^2(0,T;\lz):\partial_t v\in L^{q'}(0,T;W^{-m,q'}(D))\}
\end{align*}
is compactly embedded into $W^{1,q'}(0,T;W^{-m,q'}(D))$ and by \cite[Corollary 19]{S90} also in $W^{\beta,2}(0,T;W^{-m,q'}(D))$ for any $\beta\in(0,\halbe)$.
It follows from Lemma \ref{lem_sup_u_n_bound}, \ref{lem_bound_derivative_u_n-B_n}, \ref{lem_bound_intB_n} that 
\begin{align}\label{u_n-B_n_bounded}
\left(u_n-\int_0^\cdot B_n(s,u_n(s))\,dW_s\right)_{n>N_0}\text{ is bounded in }L^{q'}(\Omega;\mathfrak{V})
\end{align}
and therefore also in $L^{q'}(\Omega;W^{\beta,2}(0,T;W^{-m,q'}(D)))$. The assertion follows from a standard Markov inequality.
\end{proof}

\begin{lem}\label{lem_intBn_tight}
The sequence
$$\left(\int_0^\cdot B_n(s,u_n(s))\,dW_s\right)_{n>N_0}\text{ is bounded in }L^2(\Omega;W^{\beta,2}(0,T;\lz)).$$
\end{lem}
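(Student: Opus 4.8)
The plan is to use the intrinsic (Gagliardo--Slobodeckij) description of the fractional Sobolev space together with Itô's isometry applied to the increments of the stochastic integral. Abbreviate $I_n(t):=\int_0^t B_n(s,u_n(s))\,dW_s$. By definition,
\begin{align*}
\|I_n\|_{W^{\beta,2}(0,T;\lz)}^2=\int_0^T\|I_n(t)\|_2^2\,dt+\int_0^T\int_0^T\frac{\|I_n(t)-I_n(s)\|_2^2}{|t-s|^{1+2\beta}}\,ds\,dt,
\end{align*}
so after taking expectations it suffices to bound the two terms on the right uniformly in $n>N_0$. The first term is immediately controlled by Lemma~\ref{lem_bound_intB_n}, since the continuous embedding $C([0,T];\lz)\hookrightarrow L^2(0,T;\lz)$ gives $\erww{\int_0^T\|I_n(t)\|_2^2\,dt}\leq T\,\erww{\supt\|I_n(t)\|_2^2}\leq T C_{\ref{lem_bound_intB_n}}$.

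The main work lies in the Gagliardo double integral. The key observation is that for $0\leq s\leq t\leq T$ the increment is again a stochastic integral, $I_n(t)-I_n(s)=\int_s^t B_n(r,u_n(r))\,dW_r$, so Itô's isometry yields
\begin{align*}
\erww{\|I_n(t)-I_n(s)\|_2^2}=\erww{\int_s^t\|B_n(r,u_n(r))\|_\HS^2\,dr}.
\end{align*}
The crucial ingredient here is a \emph{pointwise-in-time} uniform bound: by Corollary~\ref{lem_Lipschitz_B_n}~$iii)$ one has $\|B_n(r,u_n(r))\|_\HS^2\leq 2[(C_\alpha^2+C_\sigma)\|k\|_{L^2(D\times D)}^2+C_\sigma C_k\|u_n(r)\|_2^2]$, and combining this with the uniform-in-time estimate $\sup_{r\in[0,T]}\erww{\|u_n(r)\|_2^2}\leq C_{\ref{lemboundungradun}}$ from Lemma~\ref{lemboundungradun} shows that $\erww{\|B_n(r,u_n(r))\|_\HS^2}$ is bounded by a constant $C$ independent of $r\in[0,T]$ and of $n>N_0$. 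Consequently $\erww{\|I_n(t)-I_n(s)\|_2^2}\leq C|t-s|$.

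Inserting this estimate and applying Tonelli's theorem (everything being nonnegative), I obtain
\begin{align*}
\erww{\int_0^T\int_0^T\frac{\|I_n(t)-I_n(s)\|_2^2}{|t-s|^{1+2\beta}}\,ds\,dt}\leq C\int_0^T\int_0^T|t-s|^{-2\beta}\,ds\,dt,
\end{align*}
and the right-hand side is finite, independently of $n$, exactly when $2\beta<1$, i.e. for $\beta\in(0,\halbe)$. Adding the two contributions yields the asserted uniform bound in $L^2(\Omega;W^{\beta,2}(0,T;\lz))$. The argument is otherwise routine; the only genuine subtlety is that the pointwise-in-$r$ control of $\erww{\|B_n(r,u_n(r))\|_\HS^2}$ cannot be extracted from the time-integrated bound of Lemma~\ref{lem_bound_intB_n} alone, but relies essentially on the uniform-in-$t$ $L^2$-estimate for $u_n$ from Lemma~\ref{lemboundungradun}, and the range $\beta<\halbe$ is forced by the integrability of $|t-s|^{-2\beta}$ at the diagonal.
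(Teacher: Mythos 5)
Your proof is correct, but it takes a genuinely different route from the paper. The paper disposes of this lemma in one line: it invokes the boundedness of $(B_n(\cdot,u_n))_{n>N_0}$ in $L^2(\Omega;L^2(0,T;\HS(\lz)))$ from Lemma~\ref{lem_bound_intB_n} and then cites the standard estimate \cite[Lemma 2.1]{FG95}, which states precisely that
\begin{align*}
\erww{\left\|\int_0^\cdot \Phi\,dW_s\right\|_{W^{\beta,2}(0,T;\lz)}^2}\leq C(\beta)\,\erww{\int_0^T\|\Phi(s)\|_{\HS}^2\,ds}
\end{align*}
for $\beta\in(0,\halbe)$. You instead reprove this estimate by hand in the case at issue, via the Gagliardo seminorm, the It\^{o} isometry for increments, and the bound $\erww{\|I_n(t)-I_n(s)\|_2^2}\leq C|t-s|$. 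Your argument is sound and self-contained, but note that it uses a strictly stronger input than the paper needs: you require the \emph{pointwise-in-time} bound $\sup_{r}\erww{\|B_n(r,u_n(r))\|_{\HS}^2}\leq C$ (available here from Corollary~\ref{lem_Lipschitz_B_n}~$iii)$ and Lemma~\ref{lemboundungradun}), whereas the Flandoli--Gatarek lemma only needs the time-integrated bound -- one sees this by writing $\erww{\|I_n(t)-I_n(s)\|_2^2}=\int_s^t g(r)\,dr$ with $g(r)=\erww{\|B_n(r,u_n(r))\|_{\HS}^2}$ and applying Tonelli in $r$ before estimating, which yields a factor $\int_0^T g(r)\,dr$ times a finite kernel integral. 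Your shortcut costs nothing in this paper since the uniform-in-time second-moment estimate is available, and it has the merit of being elementary; the cited lemma is more robust (it extends to $L^p$ moments, $p>2$, via Burkholder--Davis--Gundy rather than the isometry) and keeps the proof to one line. A minor point of hygiene: for a $Q$-Wiener process the It\^{o} isometry produces $\|B_nQ^{\halbe}\|_{\HS}^2$ rather than $\|B_n\|_{\HS}^2$; since $Q$ is trace class this only changes the constant, and the paper adopts the same convention throughout, so this is not an issue.
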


\begin{proof}
From Lemma \ref{lem_bound_intB_n} we know that $(B_n(\cdot,u_n))_{n>N_0}$ is bounded in $L^2(\Omega;L^2(0,T;\linebreak \HS(\lz)))$. Using \cite[Lemma 2.1, p.369]{FG95} we get for any $\beta\in(0,\halbe)$ and $n\in\na,n>N_0,$
\begin{align}\label{B_n_boundedinfraksobolev}
\begin{split}
\erww{\left\|\int_0^\cdot B_n(s,u_n(s))\,dW_s\right\|_{W^{\beta,2}(0,T;\lz)}^2}&\leq C(\beta)\erww{\int_0^T\|B_n(s,u_n(s))\|_{\HS}^2\,dt}\\
&\leq C(\beta)C_{\ref{lem_bound_intB_n}}.
\end{split}
\end{align}
\end{proof}

\begin{lem}\label{Lem_K_R_relativelycompact}
For all $R>0$ and $1\leq s<\infty$
\begin{align*}
K_R:=\left\{v\in L^p(0,T;\sobolevp)\cap W^{\beta,2}(0,T;W^{-m,q'}(D))\cap C([0,T];L^2(D)):\||v\||<R\right\}
\end{align*}
is relatively compact in $L^s(0,T;\lz)$, where
\begin{align*}
    \|| v\||:=\|v\|_{L^p(0,T;W^{1,p}_0(D))}+\|v\|_{W^{\beta,2}(0,T;W^{-m,q'}(D))}+\|v\|_{C([0,T];L^2(D))}.
\end{align*}
\end{lem}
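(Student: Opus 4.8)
The plan is to deduce the statement from a fractional Aubin--Lions--Simon compactness lemma based on the embedding chain
\[
\sobolevp \hookrightarrow\hookrightarrow \lz \hookrightarrow \sobolevmqs,
\]
in which the first embedding is compact and the second continuous. Compactness of $\sobolevp\hookrightarrow\hookrightarrow\lz$ is the Rellich--Kondrachov theorem on the bounded domain $D$, available precisely because the standing assumption $p>\max\{1,\frac{2d}{d+2}\}$ is strict, so that the Sobolev exponent satisfies $p^\ast>2$ (with the usual modifications when $p\geq d$). Continuity of $\lz\hookrightarrow\sobolevmqs$ follows by duality from $\sobolevmq\hookrightarrow\lz$, which holds since $\sobolevmq\hookrightarrow L^\infty(D)\hookrightarrow\lz$ on the bounded domain by the choice of $m$ in \eqref{embedding}.

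First I would establish relative compactness of $K_R$ in $L^1(0,T;\lz)$ by verifying the hypotheses of Simon's criterion (cf.\ \cite{S90}) for the chain above. The bound $\|v\|_{L^p(0,T;\sobolevp)}<R$ gives, on the finite interval $(0,T)$, a uniform bound in $L^1(0,T;\sobolevp)$, supplying the spatial-compactness ingredient through the compact embedding $\sobolevp\hookrightarrow\hookrightarrow\lz$. For the equicontinuity-in-time ingredient I would use that membership in $W^{\beta,2}(0,T;\sobolevmqs)$ controls the mean-square modulus of continuity of translations,
\[
\|v(\cdot+h)-v(\cdot)\|_{L^2(0,T-h;\sobolevmqs)}\leq C\,h^{\beta}\,\|v\|_{W^{\beta,2}(0,T;\sobolevmqs)}\leq C\,h^{\beta}R,
\]
so that a Hölder step on the finite interval yields $\|v(\cdot+h)-v(\cdot)\|_{L^1(0,T-h;\sobolevmqs)}\to0$ as $h\to0$, uniformly over $v\in K_R$. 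Simon's lemma then gives that $K_R$ is relatively compact in $L^1(0,T;\lz)$.

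Finally I would upgrade the time integrability from $1$ to an arbitrary $s\in[1,\infty)$ by means of the uniform control $\|v\|_{C([0,T];\lz)}<R$ contained in $\||\cdot\||$, i.e.\ the uniform $L^\infty(0,T;\lz)$ bound. Given a sequence $(v_k)\subseteq K_R$ converging to some $v$ in $L^1(0,T;\lz)$, the limit inherits the bound $\|v\|_{L^\infty(0,T;\lz)}\leq R$ along an a.e.-convergent subsequence, and for every $s\in[1,\infty)$
\[
\|v_k-v\|_{L^s(0,T;\lz)}^s=\int_0^T\|v_k(t)-v(t)\|_2^s\,dt\leq(2R)^{s-1}\,\|v_k-v\|_{L^1(0,T;\lz)}\longrightarrow0,
\]
which is the asserted relative compactness in $L^s(0,T;\lz)$.

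The step I expect to be most delicate is the time-translation estimate together with the correct use of Simon's criterion: one must convert the $L^2$-in-time regularity provided by $W^{\beta,2}(0,T;\sobolevmqs)$ into the translation condition of the compactness lemma (done by Hölder on the bounded interval, which is why passing through $L^1$-in-time is convenient) and reconcile the two distinct spatial exponents $p$ and $q'$ through the single embedding chain. The remaining ingredients---Rellich--Kondrachov and the elementary interpolation with the uniform $C([0,T];\lz)$ bound---are routine.
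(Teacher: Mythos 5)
Your argument is correct and follows essentially the same route as the paper, which simply invokes Simon's compactness criterion (citing \cite[Corollary 7]{S87}) for the triple $\sobolevp\hookrightarrow\hookrightarrow\lz\hookrightarrow\sobolevmqs$. You merely make explicit the ingredients the citation packages together: the time-translation estimate coming from the $W^{\beta,2}(0,T;\sobolevmqs)$ bound and the upgrade from $L^1(0,T;\lz)$ to $L^s(0,T;\lz)$ via the uniform $C([0,T];\lz)$ bound.
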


\begin{proof}
Using the compact embeddings $\sobolevp\hookrightarrow \lz \hookrightarrow W^{-m,q'}(D)$ we obtain from \cite[Corollary 7]{S87} that $K_R$ is relatively compact in $L^s(0,T;\lz)$ for all $1\leq s<\infty$.
\end{proof}

\begin{lem}\label{lem_undelta_tight}
The sequence of laws of $(u_n)_{n>N_0}$ is tight on $L^s(0,T;L^2(D))$ for any $1\leq s<\infty$.
\end{lem}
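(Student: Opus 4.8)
The plan is to combine the uniform moment bounds from the \emph{a priori} estimates with the compactness criterion of Lemma~\ref{Lem_K_R_relativelycompact} by means of a standard Markov-inequality argument. The central point is that, although $u_n$ is not directly estimated in $W^{\beta,2}(0,T;W^{-m,q'}(D))$, such a bound can be recovered by writing
\begin{align*}
u_n=\left(u_n-\int_0^\cdot B_n(s,u_n(s))\,dW_s\right)+\int_0^\cdot B_n(s,u_n(s))\,dW_s.
\end{align*}
The first summand is bounded in $L^{q'}(\Omega;W^{\beta,2}(0,T;W^{-m,q'}(D)))$ by Lemma~\ref{lem_undelta-intBn_tight}, while the second is bounded in $L^2(\Omega;W^{\beta,2}(0,T;L^2(D)))$ by Lemma~\ref{lem_intBn_tight}; since $L^2(D)\hookrightarrow W^{-m,q'}(D)$ and $q'\leq 2$, the stochastic integral is also bounded in $L^{q'}(\Omega;W^{\beta,2}(0,T;W^{-m,q'}(D)))$ on the finite probability space. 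Adding both contributions yields a bound
\begin{align*}
\erww{\|u_n\|_{W^{\beta,2}(0,T;W^{-m,q'}(D))}^{q'}}\leq C
\end{align*}
with $C$ independent of $n$.

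Next I would fix $\eps>0$ and, for $R>0$, use the relatively compact sets $K_R$ from Lemma~\ref{Lem_K_R_relativelycompact}, whose closures $\overline{K_R}$ are compact in $L^s(0,T;L^2(D))$ for every $1\leq s<\infty$. Note first that the regularity $u_n\in L^2(\Omega;C([0,T];L^2(D)))\cap L^q(\Omega;L^q(0,T;W^{m,q}_0(D)))$, together with the embedding $W^{m,q}_0(D)\hookrightarrow\sobolevp$ and the decomposition above, guarantees that $\mP$-a.s.\ $u_n$ lies in the intersection space defining $K_R$, so that $\{\||u_n\||<R\}=\{u_n\in K_R\}$. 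By subadditivity of the measure applied to the three constituent norms in $\||\cdot\||$ and Markov's inequality,
\begin{align*}
\mP\left(\||u_n\||\geq R\right)
&\leq \frac{3^p}{R^p}\erww{\|u_n\|_{L^p(0,T;\sobolevp)}^p}
+\frac{3^{q'}}{R^{q'}}\erww{\|u_n\|_{W^{\beta,2}(0,T;W^{-m,q'}(D))}^{q'}}\\
&\quad+\frac{9}{R^2}\erww{\supt\|u_n(t)\|_2^2}.
\end{align*}

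The three expectations on the right-hand side are bounded uniformly in $n$ by Lemma~\ref{lemboundungradun} (together with Poincaré's inequality), by the bound just established, and by Lemma~\ref{lem_sup_u_n_bound}, respectively. Hence the right-hand side is dominated by a negative power of $R$ with constant independent of $n$, and tends to $0$ as $R\to\infty$. I would then choose $R=R(\eps)$ large enough that $\mP(\||u_n\||\geq R)<\eps$ for all $n>N_0$ simultaneously, so that the compact set $\overline{K_R}$ satisfies $\mP(u_n\in\overline{K_R})\geq 1-\eps$ uniformly in $n$; this is precisely tightness of the laws of $(u_n)_{n>N_0}$ on $L^s(0,T;L^2(D))$. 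The only genuinely non-routine step is securing the uniform $W^{\beta,2}(0,T;W^{-m,q'}(D))$ moment bound for $u_n$ itself; once the additive decomposition above is in place, everything else reduces to an elementary application of Markov's inequality with the previously established uniform estimates.
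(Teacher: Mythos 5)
Your proposal is correct and follows essentially the same route as the paper: the same additive decomposition of $u_n$ into $u_n-\int_0^\cdot B_n(s,u_n(s))\,dW_s$ plus the stochastic integral to obtain the uniform $W^{\beta,2}(0,T;W^{-m,q'}(D))$ bound, followed by the compact sets $K_R$ of Lemma~\ref{Lem_K_R_relativelycompact} and a Markov-inequality estimate. The only cosmetic difference is that you apply Markov's inequality separately to the three constituent norms with their natural exponents, whereas the paper applies it once to $\||u_n\||^{q'}$; both are equally valid.
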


\begin{proof}
Using Lemma \ref{lem_undelta-intBn_tight} and Lemma \ref{lem_intBn_tight} with the knowledge that $q'\leq 2$ we know that the sequence
\begin{align*}
\left(u_n=u_n-\int_0^\cdot B_n(s,u_n(s))\,dW_s+\int_0^\cdot B_n(s,u_n(s))\,dW_s\right)_{n>N_0}
\end{align*}
is bounded in $L^{q'}(\Omega;W^{\beta,2}(0,T;W^{-m,q'}(D))$ for any $\beta\in(0,\halbe)$. Furthermore, we obtain by Lemma \ref{proof_apriori_1} that $(u_n)_{n>N_0}$ is bounded in $L^p(\Omega;L^p(0,T;\sobolevp))\cap L^2(\Omega;L^2(0,T;L^2(D)))$.
By Lemma \ref{Lem_K_R_relativelycompact} $K_R$ is relatively compact in $L^{s}(0,T;\lz)$ for all R>0 and $1\leq s<\infty$. There holds for all $n\in\na,n>N_0,$ and an appropriate $R>0$
\begin{align*}
\mu_{u_n}(L^{s}(0,T;\lz)\setminus K_R)&=\int_{\left\{v\in L^{s}(0,T;\lz):\||v\||\geq R \right\}}1\,d\mu_{u_n}\\
&=\int_{\left\{\omega\in\Omega:\||u_n(\omega)\||\geq R \right\}}1\,d\mathbb{P}\\
&=\frac{1}{R^{q'}}\int_{\left\{\omega\in\Omega:\||u_n(\omega)\||\geq R \right\}} R^{q'}\,d\mathbb{P}\\
&\leq \frac{1}{R^{q'}}\int_\Omega\||u_n\||^{q'}\,d\mathbb{P}.
\end{align*}
\end{proof}

\subsection{Passage to the limit}\label{Sec_Stoch_Compactness}

For $n\in\na,n>N_0,$ we consider the vector 
\begin{align*}
Y_n:=\left(u_n,W,u_0\right) \quad\text{in}\quad \mathcal{X}=C([0,T];\lz)\times C([0,T];U)\times\lz.
\end{align*}
By Lemma~\ref{lem_undelta_tight} and Prokhorovs theorem  a not relabeld subsequence of $(u_n)_{n>N_0}$ converges in law for $n\rightarrow\infty$ to a probability measure $\mu_\infty$ with respect to $L^s(0,T;\lz)$ for all $1\leq s<\infty$.
Skorokhod's theorem implies the existence of
\begin{itemize}
\item a probability space $(\Omega',\mathcal{A}',\mathbb{P}')$ (which always can be chosen as $([0,1],\mathcal{B}([0,1]),\lambda)$ with $\mathcal{B}([0,1])$ the set of all Borel measures on $[0,1]$ and $\lambda$ the one-dimensional Lebesgue-measure)
\item a family of random variables $\mathcal{Y}_n=(v_n,\mathcal{W},v_0)$ on $(\Omega', \mathcal{A}',\mP')$ with values in $\mathcal{X}$ having the same law as $Y_n$
\item a random variable $u_\infty$ with values in $L^2(0,T;\lz)$ such that the law of $u_\infty$ is equal to the law of $u_n$ and for $n\rightarrow\infty$
\begin{align}\label{P-a.s.-conv_vndelta}
    v_n\rightarrow u_\infty \text{ in } L^s(0,T;\lz),\ \mP\text{-a.s. in }\Omega'.
\end{align}
\end{itemize}

\begin{remark}
    By \cite{BHR18} the random variables $\mathcal{W}$ and $v_0$ are independent of $n$.
\end{remark}

\begin{lem}\label{lem_conv_new_probabilityspace}
We have $v_n\in L^2(\Omega';C([0,T];\lz))$, $v_n(0)=v_0$ a.e. in $\Omega'\times D$, $\mathcal{W}(0)=0$ $\mP'$-a.s. in $\Omega'$, and the following convergences hold for $n\rightarrow\infty$ after passing to a not relabeled subsequence if necessary:
\begin{itemize}
    \item[$i)$] $v_n\rightarrow u_\infty$ in $L^\varrho(\Omega',L^s(0,T;\lz))$ for all $\varrho<2$ and all $1\leq s<\infty$
    \item[$ii)$] $\nabla v_n \rightharpoonup \nabla u_\infty$ in $L^p(\Omega';L^p(0,T;L^p(D)))$
    \item[$iii)$] $a(x,v_n,\nabla v_n)\rightharpoonup G$ in $L^{p'}(\Omega';L^{p'}(0,T;L^{p'}(D)))$ for an element $G\in L^{p'}(\Omega'; L^{p'}(0,T;\linebreak L^{p'}(D)))$
    \item[$iv)$] $f(v_n)\rightharpoonup f(u_\infty)$ in $L^2(\Omega';L^2(0,T;L^2(D)))$ and $f(v_n)\rightarrow f(u_\infty)$ in $L^\varrho(\Omega'; L^s(0,T;\linebreak L^2(D)))$.
\end{itemize}
\end{lem}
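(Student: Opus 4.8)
The plan is to push all of the a priori information from $u_n$ over to $v_n$ via equality in law, and then to combine the $\mathbb{P}'$-almost sure convergence \eqref{P-a.s.-conv_vndelta} with a uniform-integrability (Vitali) argument for the strong statements and with weak compactness (reflexivity) for the weak statements. The enabling observation is that, for each fixed $n$, the random variable $v_n$ has the same law as $u_n$ on the path space, and every functional occurring in the a priori estimates is Borel measurable there. Indeed $w\mapsto\supt\|w(t)\|_2^2$ is continuous, while $w\mapsto\int_0^T\|\nabla w(s)\|_p^p\,ds$ and $w\mapsto\int_0^T\|a(\cdot,w,\nabla w)\|_{p'}^{p'}\,ds$ (extended by $+\infty$ off $L^p(0,T;\sobolevp)$) are lower semicontinuous, hence Borel, on $L^s(0,T;\lz)$. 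Taking expectations and using $\mathcal{Y}_n\overset{d}{=}Y_n$, Lemmas~\ref{lem_sup_u_n_bound}, \ref{lemboundungradun} and \ref{a(u_n)_bounded} transfer verbatim to $v_n$ on $(\Omega',\mathcal{A}',\mathbb{P}')$; in particular $\nabla v_n$ and $a(\cdot,v_n,\nabla v_n)$ are well defined $\mathbb{P}'$-a.s., $v_n\in L^2(\Omega';C([0,T];\lz))$, and $v_n\in L^p(\Omega';L^p(0,T;\sobolevp))$. The identities $v_n(0)=v_0$ and $\mathcal{W}(0)=0$ follow since $\{(w,\eta,z)\in\mathcal{X}:w(0)=z\}$ and $\{(w,\eta,z)\in\mathcal{X}:\eta(0)=0\}$ are closed, carry full mass under the law of $Y_n$, and hence under the equal law of $\mathcal{Y}_n$.

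For $i)$, the transferred Lemma~\ref{lem_sup_u_n_bound} together with the continuous embedding $C([0,T];\lz)\hookrightarrow L^s(0,T;\lz)$ shows that $(v_n)$ is bounded in $L^2(\Omega';L^s(0,T;\lz))$; by Fatou the same holds for $u_\infty$. Therefore, for any $\varrho<2$, the family $\{\|v_n-u_\infty\|_{L^s(0,T;\lz)}^\varrho\}_n$ is bounded in $L^{2/\varrho}(\Omega')$ with $2/\varrho>1$, hence uniformly integrable. Combining this with the $\mathbb{P}'$-a.s. convergence \eqref{P-a.s.-conv_vndelta}, Vitali's theorem upgrades it to convergence in $L^\varrho(\Omega';L^s(0,T;\lz))$, which is $i)$.

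For $ii)$, the transferred bound makes $(\nabla v_n)$ bounded in the reflexive space $L^p(\Omega';L^p(0,T;L^p(D)))$, so a subsequence satisfies $\nabla v_n\rightharpoonup\chi$. Testing against $\psi(\omega')\phi(t,x)$ with $\psi\in L^\infty(\Omega')$ and $\phi\in C_c^\infty((0,T)\times D)^d$, integrating by parts, and using the strong convergence from $i)$ identifies $\chi=\nabla u_\infty$; uniqueness of the limit gives the asserted weak convergence and shows $u_\infty\in L^p(\Omega';L^p(0,T;\sobolevp))$. For $iii)$, the transferred Lemma~\ref{a(u_n)_bounded} gives $(a(\cdot,v_n,\nabla v_n))$ bounded in the reflexive space $L^{p'}(\Omega';L^{p'}(0,T;L^{p'}(D)))$, whence a weakly convergent subsequence with some limit $G$; no identification of $G$ is attempted at this stage, this being deferred to the later monotonicity argument.

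For $iv)$, Lipschitz continuity of $f$ yields the pointwise bound $\|f(v_n)-f(u_\infty)\|_{L^s(0,T;\lz)}\leq L_f\|v_n-u_\infty\|_{L^s(0,T;\lz)}$, so $i)$ immediately gives $f(v_n)\rightarrow f(u_\infty)$ in $L^\varrho(\Omega';L^s(0,T;\lz))$; since $f\in L^\infty(\re)$, the sequence $(f(v_n))$ is bounded in $L^2(\Omega';L^2(0,T;\lz))$, and strong convergence in the weaker space together with this bound forces the weak convergence $f(v_n)\rightharpoonup f(u_\infty)$ in $L^2(\Omega';L^2(0,T;\lz))$. The only genuinely delicate point in the whole argument is the transfer step of the first paragraph: one must verify that objects such as $\nabla v_n$ and $a(\cdot,v_n,\nabla v_n)$, which are not continuous functionals of the $L^s(0,T;\lz)$-path, are nonetheless Borel (typically lower semicontinuous) on the path space, so that equality of laws really forces equality of the corresponding expectations. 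Once this is secured, all remaining steps are routine weak/strong compactness.
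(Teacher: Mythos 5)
Your proof is correct and follows essentially the same route as the paper: transfer of the a priori bounds via equality in law, Vitali's theorem for $i)$, reflexivity and weak compactness (with the standard identification of the weak limit of the gradients) for $ii)$ and $iii)$, and the Lipschitz continuity and boundedness of $f$ for $iv)$. The one imprecision is your claim that $w\mapsto\int_0^T\|a(\cdot,w,\nabla w)\|_{p'}^{p'}\,ds$ is lower semicontinuous on $L^s(0,T;\lz)$ — this is unclear for a non-convex Carath\'eodory integrand, but it is also unnecessary, since (as in the paper's Lemma~\ref{a(u_n)_bounded}) the bound on $a(\cdot,v_n,\nabla v_n)$ follows directly from the growth condition (A2) together with the already-transferred bounds on $v_n$ and $\nabla v_n$.
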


\begin{proof}
By equality in law we know $v_n\in L^2(\Omega';C([0,T];\lz))$, $v_n(0)=0$, and $\mathcal{W}(0)=0$.\\
$i)$ Since $(u_n)_{n>N_0}$ is bounded in $L^2(\Omega;L^2(0,T;L^2(D)))$ by Lemma~\ref{lem_sup_u_n_bound},
\begin{align}\label{vndelta_bounded}
    (v_n)_{n>N_0} \text{ is bounded in } L^2(\Omega';L^2(0,T;L^2(D)))
\end{align}
by equality in law. Using \eqref{P-a.s.-conv_vndelta} we obtain by Vitalis theorem the claimed convergence.\\
$ii)$ By Lemma~\ref{lemboundungradun} and equality in law
\begin{align}\label{nablavndelta_bounded}
    (\nabla v_n)_{n>N_0} \text{ is bounded in } L^p(\Omega';L^p(0,T;L^p(D))).
\end{align}
Hence, there exists a not relabeled subsequence such that $\nabla v_n \rightharpoonup \varphi$ in $L^p(\Omega';L^p(0,T;\linebreak L^p(D)))$ for $n\rightarrow\infty$ and an element $\varphi$ which can be verified as $\nabla u_\infty$.\\
$iii)$ Using \eqref{vndelta_bounded} and \eqref{nablavndelta_bounded} we can show by an analogous argumentation as in Lemma~\ref{a(u_n)_bounded} that $(a(\cdot,v_n,\nabla v_n))_{n>N_0}$ is bounded in $L^{p'}(\Omega';L^{p'}(0,T;L^{p'}(D)))$ and is hence weak convergent.\\
$iv)$ The convergences are a consequence of the Lipschitz continuity of $f$.
\end{proof}

\begin{defi}
For $t\in[0,T]$ and $n\in\na,n>N_0,$ we define $(F_t^{n})_{t\in[0,T]}$ to be the smallest sub-$\sigma$-field of $\mathcal{A}'$ generated by $v_n(s)$, $\mathcal{W}(s)$ for $0\leq s\leq t$, and $v_0$. The right-continuous, $\mathbb{P}'$-augmented filtration of $(F_t^n)_{t\in[0,T]}$ denoted by $(\mathcal{F}_t^n)_{t\in[0,T]}$ is for any $t\in[0,T]$ defined by
\begin{align*}
\mathcal{F}_t^n:=\bigcap_{s>t}\sigma[F_s^n\cup\{\mathcal{N}\in\mathcal{A}':\mathbb{P}'(\mathcal{N})=0\}].
\end{align*}
\end{defi}

\begin{remark}
From the previous definition it immediately follows that $v_0$ is $\mathcal{F}_0^n$-measurable for all $n>N_0$.
\end{remark}

\begin{lem}\label{lem_W_Q-Wiener_process}
For each $n\in\na,n>N_0,$ $v_n$ is adapted to $(\mathcal{F}_t^n)_{t\in[0,T]}$ and $\mathcal{W}=(\mathcal{W}(t))_{t\in[0,T]}$ is a $(\mathcal{F}_t^n)_{t\in[0,T]}$-adapted $Q$-Wiener process with values in $U$.
\end{lem}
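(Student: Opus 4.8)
\emph{Plan of proof.} The adaptedness of both $v_n$ and $\mathcal{W}$ is immediate from the construction: by definition of $(F_t^n)_{t\in[0,T]}$ the evaluations $v_n(s)$ and $\mathcal{W}(s)$ are $F_s^n$-measurable for every $s\le t$, so $v_n$ and $\mathcal{W}$ are adapted to $(F_t^n)_{t\in[0,T]}$ and hence to its augmentation $(\mathcal{F}_t^n)_{t\in[0,T]}$. It therefore only remains to show that $\mathcal{W}$ is a $Q$-Wiener process with respect to $(\mathcal{F}_t^n)_{t\in[0,T]}$.

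Several of the defining properties transfer for free from the genuine $Q$-Wiener process $W$ by equality in law. Since $\mathcal{Y}_n=(v_n,\mathcal{W},v_0)$ has the same law as $Y_n=(u_n,W,u_0)$ on $\mathcal{X}$ and the middle marginal of $Y_n$ is $W$, the process $\mathcal{W}$ has $\mP'$-a.s.\ continuous paths, satisfies $\mathcal{W}(0)=0$, and for every $0\le s<t\le T$ the increment $\mathcal{W}(t)-\mathcal{W}(s)$ is centred Gaussian with covariance operator $(t-s)Q$, just like $W(t)-W(s)$. The only substantial point is independence of the increments from the past, i.e.\ that $\mathcal{W}(t)-\mathcal{W}(s)$ is independent of $\mathcal{F}_s^n$.

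The idea is to transport this independence from the original stochastic basis. On $(\Omega,\mathcal{A},(\mathcal{F}_t)_{t\in[0,T]},\mP)$ the process $W$ is a $Q$-Wiener process and $u_n$ is $(\mathcal{F}_t)$-adapted, so $\sigma(u_n(r),W(r),u_0:0\le r\le s)\subseteq\mathcal{F}_s$, and $W(t)-W(s)$ is independent of $\mathcal{F}_s$, hence of this smaller $\sigma$-field. Written through characteristic functionals this reads, for every $\phi\in U$ and every bounded continuous $\gamma$ depending only on the path restrictions to $[0,s]$,
\begin{align*}
&\erww{e^{i\langle W(t)-W(s),\phi\rangle_U}\,\gamma\big(\{u_n(r),W(r)\}_{r\le s},u_0\big)}\\
&\quad=\erww{e^{i\langle W(t)-W(s),\phi\rangle_U}}\,\erww{\gamma\big(\{u_n(r),W(r)\}_{r\le s},u_0\big)}.
\end{align*}
Both sides are expectations of bounded continuous functions of $Y_n$, so by equality in law the same identity holds verbatim with $(u_n,W,u_0)$ replaced by $(v_n,\mathcal{W},v_0)$ and $\erw$ by $\mathbb{E}'$. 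Since such functionals $\gamma$ generate $F_s^n$, this proves that $\mathcal{W}(t)-\mathcal{W}(s)$ is independent of $F_s^n$.

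It remains to pass to the augmented right-continuous filtration. For any $s'\in(s,t)$ the increment $\mathcal{W}(t)-\mathcal{W}(s')$ is independent of $F_{s'}^n$; since adjoining $\mP'$-null sets preserves independence and $\mathcal{F}_s^n\subseteq\sigma[F_{s'}^n\cup\mathcal{N}]$, it is in particular independent of $\mathcal{F}_s^n$. Letting $s'\downarrow s$ and using the $\mP'$-a.s.\ continuity of $\mathcal{W}$ yields, in the limit, that $\mathcal{W}(t)-\mathcal{W}(s)$ is independent of $\mathcal{F}_s^n$. Together with continuity, $\mathcal{W}(0)=0$ and the Gaussian increments, this shows $\mathcal{W}$ is a $Q$-Wiener process for $(\mathcal{F}_t^n)_{t\in[0,T]}$. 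The main obstacle is precisely this independence-transfer: one must check that the functionals $\gamma$ used are genuinely continuous on $C([0,T];\lz)\times C([0,T];U)\times\lz$, so that equality in law applies, while still rich enough to generate all of $F_s^n$; the final passage to the right-continuous augmentation is then handled by the null-set invariance of independence and a limiting argument based on the continuity of $\mathcal{W}$ rather than by direct computation.
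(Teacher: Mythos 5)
Your proof is correct, but it takes a genuinely different route from the paper's. You verify the definition of a $Q$-Wiener process directly: you transfer the independence of the increment $\mathcal{W}(t)-\mathcal{W}(s)$ from $F_s^n$ by testing against characteristic functionals $e^{i\langle\,\cdot\,,\phi\rangle_U}$ multiplied by bounded continuous functionals $\gamma$ of the restricted paths, and you combine this with the Gaussian law of the increments (which is free by equality in law) and a null-set/right-limit argument for the augmented filtration. The paper instead only transfers the first- and second-moment identities $\erwws{\langle\mW(t)-\mW(s),e_k\rangle_U\,\psi((\mY_n)_{|[0,s]})}=0$ and the corresponding covariance identity, uses Doob--Dynkin to upgrade these to conditional-expectation statements, concludes that $\mW$ is a continuous square-integrable $(\mathcal{F}_t^n)$-martingale with $\langle\langle\mW\rangle\rangle_t=tQ$, and invokes the generalized L\'evy characterization \cite[Theorem 4.6]{DPZ14}. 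Your approach buys independence of L\'evy's theorem and works exclusively with bounded continuous test functions (so equality in law applies without any uniform-integrability consideration), at the price of a monotone-class step to pass from the generating family of functionals $\gamma$ to all of $F_s^n$ and an a.s.-limit argument for the augmentation; the paper's approach only needs (bi)linear moment identities, which are simpler to manipulate, but must justify passing unbounded continuous integrands through the equality in law and must cite the martingale characterization. Do make the monotone-class step explicit (bounded continuous $\gamma$ of the restricted path determine the law on the $\pi$-system of cylinder sets generating $F_s^n$), and note that the transported factorization must hold jointly in $\phi$ and $\gamma$ before you can conclude independence of the $U$-valued increment from the $\sigma$-field; with those two sentences added your argument is complete.
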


\begin{proof}
Obviously, $v_n$ and $\mathcal{W}$ are $(\mathcal{F}_t^n)_{t\in[0,T]}$-adapted for any $n>N_0$ by construction of the filtration. By equality in law we know $\mathcal{W}(0)=0$ $\mP$-a.s. in $\Omega'$ and using Burkholder-Davis-Gundy inequality we have
\begin{align*}
    \erwws{\sup_{t\in[0,T]}|\mathcal{W}(t)|_U^2}=\erww{\sup_{t\in[0,T]}|W(t)|_U^2}\leq C_{BDG}\operatorname{Tr}(Q)T<\infty.
\end{align*}
Let $n\in\na,n>N_0,$ be arbitrary. For all $k\in\na,0\leq s\leq t\leq T$, and all bounded and continuous functions $\psi:\mathcal{X}\rightarrow\re$ we obtain by equality in law
\begin{align}\label{eqmW0}
    \erwws{\langle \mW(t)-\mW(s),e_k\rangle_U\psi\left((\mY_n)_{|[0,s]}\right)}=\erww{\langle W(t)-W(s),e_k\rangle_U\psi\left((Y_n)_{|[0,s]}\right)}=0,
\end{align}
where $(e_k)_{k\in\na}$ is an orthonormal basis of $U$. The real-valued random variable $\omega'\ni\Omega'\mapsto\psi\left((\mY_n)_{|[0,s]}(\omega')\right)$ is $(F_t^n)_{t\in[0,T]}$-measurable by definition. Using \eqref{eqmW0} we obtain for all $k\in\na,0\leq s\leq t\leq T$, and all bounded and continuous functions $\psi:\mathcal{X}\rightarrow\re$
\begin{align*}
    0&=\erwws{\langle \mW(t)-\mW(s),e_k\rangle_U\psi\left((\mY_n)_{|[0,s]}\right)}\\
    &=\erwws{\erwws{\langle \mW(t)-\mW(s),e_k\rangle_U\psi\left((\mY_n)_{|[0,s]}\right)}|F_s^n}\\
    &=\erwws{\psi\left((\mY_n)_{|[0,s]}\right)\erwws{\langle \mW(t)-\mW(s),e_k\rangle_U|F_s^n}}.
\end{align*}
The Doob-Dynkin lemma (see, e.g., \cite[Proposition 3]{RS06}) implies 
\begin{align*}
    \erwws{\mathds{1}_A\erwws{\langle\mW(t)-\mW(s),e_k\rangle_U|F_s^n}}=0
\end{align*}
for all $k\in\na,0\leq s\leq t\leq T$, and all $F_s^n$-measurable sets $A\in\mathcal{A'}$. Consequently, there holds
\begin{align*}
    \erwws{\langle\mW(t)-\mW(s),e_k\rangle_U|F_s^n}=0\quad\mP'\text{-a.s. in }\Omega'
\end{align*}
for all $0\leq s\leq t\leq T$ and $k\in\na$. Hence, $\mW$ is a $(F_t^n)_{t\in[0,T]}$-martingale, and, by \cite[p.75]{DM80}, $\mW$ is a martingale with respect to the augmented filtration $(\mathcal{F}_t^n)_{t\in[0,T]}$. Using equality in law of $\mW$ and $W$ we obtain for all $0\leq s\leq t\leq T$ and $k,j\in\na$
\begin{align*}
    0&=\erww{\langle W(t)-W(s),e_k\rangle_U\langle W(t)-W(s),e_j\rangle_U-\langle(t-s)Q(e_k),e_j\rangle_U\psi\left((Y_n)_{|[0,s]}\right)}\\
    &=\erwws{\langle\mW(t)-\mW(s),e_k\rangle_U\langle\mW(t)-\mW(s),e_j\rangle_U-\langle(t-s)Q(e_k),e_j\rangle_U\psi\left((\mY_n)_{|[0,s]}\right)}.
\end{align*}
With similar arguments as before, we get $\langle\langle W\rangle\rangle_t=tQ$ for all $t\in[0,T]$, see \cite[p.75]{DPZ14}, where $\langle\langle W\rangle\rangle$ denotes the quadratic variation process of $\mW$. By a generalized Levy's theorem (see \cite[Theorem 4.6]{DPZ14}) $\mW$ is a $Q$-Wiener process with values in $U$.
\end{proof}

\begin{lem}
For any $n\in\na,n>N_0$, and $t\in[0,T]$ we define 
\begin{align*}
    M_n(t):=v_n(t)-v_0+\int_0^t\left[\frac{1}{n} j(\vnd,\cdot)-\diver a(\cdot,v_n(s),\nabla v_n(s))+f(v_n(s))\right]ds.
\end{align*}
The stochastic process $(M_n(t))_{\tiT}$ is a square-integrable, continuous $(\mathcal{F}_t^n)_{\tiT}$-martingale with values in $L^2(D)$ such that for each $t\in[0,T]$
\begin{align}
    \langle\langle M_n\rangle\rangle_t&=\int_0^t\left(B_n(s,v_n(s)) Q^\halbe\right)\circ\left(B_n(s,v_n(s))Q^\halbe\right)^\ast\,ds\label{quadrVar_Mndelta}\\
    \langle\langle\mathcal{W},M_n\rangle\rangle_t&=\int_0^t Q\circ B_n(s,v_n(s))\,ds.\label{qudarVar_W_Mndelta}
\end{align}
\end{lem}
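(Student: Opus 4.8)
The plan is to recognize $M_n$ as the image under a fixed, deterministic and measurable functional of the pair $(\vnd,v_0)$, and to transport the whole martingale structure from the original stochastic basis by equality in law, in the spirit of the proof of Lemma~\ref{lem_W_Q-Wiener_process}. To this end I would first consider, on the original space, the process
\[
\widetilde M_n(t):=\und(t)-u_0+\int_0^t\Big[\tfrac1n j(\und(s),\cdot)-\diver a(\cdot,\und(s),\nabla\und(s))+f(\und(s))\Big]\,ds=\und(t)-u_0+\int_0^t A_n(\und(s))\,ds.
\]
By Proposition~\ref{teo_existence_appr_solution} and the definition of $A_n$, the identity $\und(t)-u_0+\int_0^t A_n(\und(s))\,ds=\int_0^t B_n(s,\und(s))\,dW_s$ holds, so $\widetilde M_n=\int_0^\cdot B_n(s,\und(s))\,dW_s$ is an Itô integral. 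Consequently $\widetilde M_n$ is a square-integrable, continuous $(\mathcal{F}_t)\unter$-martingale in $\lz$, and its quadratic variation and its cross variation with $W$ are the classical ones for stochastic integrals against a $Q$-Wiener process, i.e.\ $\langle\langle\widetilde M_n\rangle\rangle_t=\int_0^t(B_n(s,\und(s))Q^\halbe)\circ(B_n(s,\und(s))Q^\halbe)^\ast\,ds$ and $\langle\langle W,\widetilde M_n\rangle\rangle_t=\int_0^t Q\circ B_n(s,\und(s))\,ds$. The crucial observation is that $\widetilde M_n$ and these two bracket processes are the \emph{same} measurable functional of $(\und,u_0)$ as $M_n$ and the right-hand sides of \eqref{quadrVar_Mndelta}, \eqref{qudarVar_W_Mndelta} are of $(\vnd,v_0)$, because $A_n$ and $B_n$ are deterministic operators.

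For the martingale property of $M_n$ I would repeat verbatim the test-function argument of Lemma~\ref{lem_W_Q-Wiener_process}. For $0\leq s\leq t\leq T$, every $\phi\in\lz$ and every bounded continuous $\psi:\mathcal{X}\rightarrow\re$, equality in law of $\mY_n$ and $Y_n$ yields
\[
\erwws{\langle M_n(t)-M_n(s),\phi\rangle_{L^2}\,\psi\big((\mY_n)_{|[0,s]}\big)}=\erww{\langle\widetilde M_n(t)-\widetilde M_n(s),\phi\rangle_{L^2}\,\psi\big((Y_n)_{|[0,s]}\big)},
\]
and the right-hand side vanishes since $\widetilde M_n$ is an $(\mathcal{F}_t)\unter$-martingale. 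Conditioning and the Doob--Dynkin lemma then give $\erwws{M_n(t)-M_n(s)\mid F_s^n}=0$, and the passage to the augmented filtration $(\mathcal{F}_t^n)_{\tiT}$ is justified exactly as in Lemma~\ref{lem_W_Q-Wiener_process}. Continuity, adaptedness and square-integrability of $M_n$ in $\lz$ transfer by equality in law, the latter being controlled by the a priori bounds behind Lemma~\ref{lem_bound_intB_n}.

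The two bracket identities \eqref{quadrVar_Mndelta} and \eqref{qudarVar_W_Mndelta} would be obtained by the same scheme. For \eqref{quadrVar_Mndelta} I would test against $\phi_1,\phi_2\in\lz$ and show that
\[
t\mapsto\langle M_n(t),\phi_1\rangle_{L^2}\langle M_n(t),\phi_2\rangle_{L^2}-\int_0^t\big\langle(B_n(s,\vnd(s))Q^\halbe)(B_n(s,\vnd(s))Q^\halbe)^\ast\phi_1,\phi_2\big\rangle_{L^2}\,ds
\]
is an $(\mathcal{F}_t^n)_{\tiT}$-martingale; the corresponding process built from $\widetilde M_n$ is a martingale on the original space by the very definition of the quadratic variation of the integral $\int B_n\,dW$, and the identity transfers by equality in law through the same conditioning argument. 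For \eqref{qudarVar_W_Mndelta} one argues identically, now relying on the joint law of $(\vnd,\mW,v_0)$ coinciding with that of $(\und,W,u_0)$ and on the classical cross variation $\langle\langle W,\int B_n\,dW\rangle\rangle_t=\int_0^t Q\circ B_n(s,\und(s))\,ds$.

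The main obstacle is of a measure-theoretic nature: one must make sure that $\widetilde M_n$ and its brackets are genuinely path-measurable functionals so that equality in law may be invoked. Writing $\widetilde M_n$ through the deterministic operator $A_n$ rather than directly through the (non-continuous) stochastic-integral map is what makes this transparent; for the brackets, however, the Bochner integrals involve $B_n(s,\vnd(s))$ and must be shown to be well defined $\mP'$-a.s.\ and measurable. This is guaranteed by the a priori regularity $\vnd\in L^p(0,T;\sobolevp)$ and the growth bound of Corollary~\ref{lem_Lipschitz_B_n}~$iii)$, both of which hold on the new space by equality in law. The remaining subtlety, the correct treatment of the augmented filtration $(\mathcal{F}_t^n)_{\tiT}$, has already been dealt with in Lemma~\ref{lem_W_Q-Wiener_process}.
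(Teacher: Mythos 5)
Your proposal is correct and follows essentially the same route as the paper: identify $M_n$ as the image under a deterministic measurable functional of $(\vnd,v_0)$ whose counterpart on the original space is the stochastic integral $\int_0^\cdot B_n(s,\und(s))\,dW_s$, then transfer the martingale property and the two bracket identities by equality in law via the test-function and conditioning argument of Lemma~\ref{lem_W_Q-Wiener_process}. The paper's proof is exactly this scheme, so no further comparison is needed.
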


\begin{proof}
By Proposition~\ref{teo_existence_appr_solution} and equality in law we know that the stochastic process $(M_n(t))_{\tiT}$ has values in $L^2(D)$.
Moreover, we get by definition of $M_n$ and equality in law for all $n>N_0$
\begin{align}\label{law_Mndelta}
\begin{aligned}
    \mathcal{L}(M_n)&=\mathcal{L}\left(v_n-v_0+\int_0^\cdot\left[\frac{1}{n} j(v_n(s),\cdot)-\diver a(\cdot,v_n(s),\nabla v_n(s))+f(v_n(s))\right] ds\right)\\
    &=\mathcal{L}\left(u_n-v_0+\int_0^\cdot\left[\frac{1}{n} j(u_n(s),\cdot)-\diver a(\cdot,u_n(s),\nabla u_n(s))+f(u_n(s))\right] ds\right)\\
    &=\mathcal{L}\left(\int_0^\cdot B_n(s,u_n(s))\,dW_s\right),
\end{aligned}
\end{align}
where $\mathcal{L}(\cdot)$ denotes the law. Therefore, $(M_n(t))_{\tiT}$ is a martingale with respect to $(\mathcal{F}_t^n)_{\tiT}$ for all $n>N_0$, that can be shown with similar arguments as in the proof of Lemma \ref{lem_W_Q-Wiener_process}.\\
Let $n\in\na,n>N_0,$ be arbitrary. Because the mapping $(t,v)\mapsto B_n(t,v)$ is measurable on $(0,T)\times\lz$ by Corollary~\ref{lem_Lipschitz_B_n} $i)$ and $\mathcal{L}(u_n)=\mathcal{L}(v_n)$, we know $\mathcal{L}(B_n(\cdot,u_n))=\mathcal{L}(B_n(\cdot,v_n))$. Let $(e_k)_{k\in\na}$ be an orthonormal basis of $L^2(D)$. For all $k,j\in\na,0\leq s\leq t\leq T$, and all bounded and continuous functions $\psi:\mathcal{X}\rightarrow \re$ we get
\begin{align*}
    &\mathbb{E}'\bigg[\langle M_n(t)-M_n(s),e_k\rangle_{L^2}\langle M_n(t)-M_n(s),e_j\rangle_{L^2}\\
    &\quad-\langle\int_s^t\left(B_n(s,v_n(s))Q^\halbe\right)\left(B_n(s,v_n(s))Q^\halbe\right)^\ast(e_k)\,ds,e_j\rangle_{L^2}\,\psi\left((\mathcal{Y}_n)_{|[0,s]}\right)\bigg]\\
    &=\mathbb{E}\bigg[\langle\int_s^tB_n(r,u_n(r))\,dW_r,e_k\rangle_{L^2}\langle\int_s^tB_n(r,u_n(r))\,dW_r,e_j\rangle_{L^2}\\
    &\qquad-\langle\int_s^t\left(B_n(s,u_n(s))Q^\halbe\right)\left(B_n(s,u_n(s))Q^\halbe\right)^\ast(e_k)\,ds,e_j\rangle_{L^2}\,\psi\left((Y_n)_{|[0,s]}\right)\bigg]\\
    &=0.
\end{align*}
Consequently, we know for any $t\in[0,T]$
\begin{align*}
    \langle\langle M_n\rangle\rangle_t=\int_0^t\left(B_n(s,v_n(s))Q^\halbe\right)\circ\left(B_n(s,v_n(s))Q^\halbe\right)^\ast\,ds.
\end{align*}
Using \eqref{law_Mndelta} and $\mathcal{L}(\mW)=\mathcal{L}(W)$ we obtain for $\tiT$
\begin{align*}
    \langle\langle\mW,M_n\rangle\rangle_t=\int_0^t Q\circ B_n(s,v_n(s))\,ds.
\end{align*}
\end{proof}

\begin{lem}\label{lem_appr_Equ}
For all $n\in\na,n>N_0$, and all $t\in[0,T]$
\begin{align*}
    M_n(t)= \int_0^t B_n(s,v_n(s))\,d\mathcal{W}_s \text{ in }L^2(\Omega';L^2(D)).
\end{align*}
In particular, for all $t\in[0,T]$
\begin{align*}
    &v_n(t)+\int_0^t \frac{1}{n}\partial J(v_n(s))\,ds-\int_0^t\diver a(\cdot,v_n(s),\nabla v_n(s))\,ds+\int_0^t f(v_n(s))\,ds\\
    &=\int_0^t B_n(s,v_n(s))\,d\mW_s\quad\text{in }L^2(D), \text{ a.s. in }\Omega'.
\end{align*}
\end{lem}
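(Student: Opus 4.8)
The plan is to identify $M_n$ with the stochastic integral $I_n(t):=\int_0^t B_n(s,v_n(s))\,d\mathcal{W}_s$ by a martingale representation argument, exploiting that the previous lemma has already pinned down both the quadratic variation $\langle\langle M_n\rangle\rangle$ and the cross-variation $\langle\langle\mathcal{W},M_n\rangle\rangle$. First I would check that $I_n$ is well defined: by Lemma~\ref{lem_W_Q-Wiener_process}, $v_n$ is $(\mathcal{F}_t^n)_{\tiT}$-adapted and $\mathcal{W}$ is an $(\mathcal{F}_t^n)_{\tiT}$-adapted $Q$-Wiener process, while $\mathcal{L}(B_n(\cdot,v_n))=\mathcal{L}(B_n(\cdot,u_n))$ together with Lemma~\ref{lem_bound_intB_n} yields $B_n(\cdot,v_n)\in L^2(\Omega';L^2(0,T;\HS(L^2(D))))$. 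Since $Q^\halbe(U)=L^2(D)$, the integrand lies in the admissible class, so $I_n$ is a continuous, square-integrable $L^2(D)$-valued $(\mathcal{F}_t^n)_{\tiT}$-martingale with $I_n(0)=0$.

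By the standard properties of the Itô integral with respect to a $Q$-Wiener process (see, e.g., \cite{DPZ14} or \cite{LR17}), its quadratic variation and its cross-variation with $\mathcal{W}$ are
\[
\langle\langle I_n\rangle\rangle_t=\int_0^t\left(B_n(s,v_n(s))Q^\halbe\right)\circ\left(B_n(s,v_n(s))Q^\halbe\right)^\ast ds,\qquad \langle\langle\mathcal{W},I_n\rangle\rangle_t=\int_0^t Q\circ B_n(s,v_n(s))\,ds,
\]
which are exactly the expressions \eqref{quadrVar_Mndelta} and \eqref{qudarVar_W_Mndelta} obtained for $M_n$. Next I would set $N_n:=M_n-I_n$, a continuous square-integrable $(\mathcal{F}_t^n)_{\tiT}$-martingale with $N_n(0)=0$, and show $\langle\langle N_n\rangle\rangle_t=0$ for all $\tiT$. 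Expanding by bilinearity,
\[
\langle\langle N_n\rangle\rangle_t=\langle\langle M_n\rangle\rangle_t-\langle\langle M_n,I_n\rangle\rangle_t-\langle\langle I_n,M_n\rangle\rangle_t+\langle\langle I_n\rangle\rangle_t,
\]
where the two extreme terms already coincide. The cross term $\langle\langle M_n,I_n\rangle\rangle_t$ is computed from the known $\langle\langle\mathcal{W},M_n\rangle\rangle_t$ using that $I_n$ is the $\mathcal{W}$-integral of $B_n(\cdot,v_n)$, and this likewise reproduces $\int_0^t(B_n Q^\halbe)\circ(B_n Q^\halbe)^\ast\,ds$. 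Hence all four contributions agree, $\langle\langle N_n\rangle\rangle\equiv 0$, and since a continuous square-integrable martingale starting at $0$ with vanishing quadratic variation is a.s. constant, $N_n\equiv 0$; that is, $M_n(t)=I_n(t)$ in $L^2(\Omega';L^2(D))$ for every $\tiT$.

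The \emph{in particular} identity is then just the definition of $M_n$ rewritten: substituting $M_n(t)=\int_0^t B_n(s,v_n(s))\,d\mathcal{W}_s$ and recalling that $\frac{1}{n}j(v_n(s),\cdot)$ is the variational form of $\frac{1}{n}\partial J(v_n(s))$ gives the stated approximate equation on $(\Omega',\mathcal{A}',(\mathcal{F}_t^n)_{\tiT},\mP')$.

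I expect the main obstacle to be the rigorous computation of the cross-variation $\langle\langle M_n,I_n\rangle\rangle_t$: one must justify, in the infinite-dimensional operator-valued setting and with the filtration generated jointly by $v_n$, $\mathcal{W}$ and $v_0$ (so that $\mathcal{W}$ need not be the only source of randomness), that pairing $M_n$ against the $\mathcal{W}$-integral $I_n$ reduces to the already-known cross-variation with $\mathcal{W}$. Alternatively, this whole identification can be packaged into a single application of an infinite-dimensional martingale representation theorem taking \eqref{quadrVar_Mndelta} and \eqref{qudarVar_W_Mndelta} as input, which is the cleaner route whenever the required version is available in the cited literature.
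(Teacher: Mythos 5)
Your proposal is correct and is essentially the paper's argument: the paper likewise uses \eqref{quadrVar_Mndelta}, \eqref{qudarVar_W_Mndelta} and the standard rules for the It\^o integral to show that the three trace terms in the expansion of $\erwws{\|M_n(t)-\int_0^t B_n(s,v_n(s))\,d\mathcal{W}_s\|_2^2}$ coincide, which is the same computation as your $\langle\langle N_n\rangle\rangle_t=0$ phrased at the level of $\mathbb{E}'[\operatorname{Tr}\langle\langle N_n\rangle\rangle_t]$. The cross-variation step you flag as the main obstacle is indeed the crux, and the paper handles it exactly as you propose, via $\langle\langle\mathcal{W},M_n\rangle\rangle$ and the identity for the covariation of a martingale with a $\mathcal{W}$-integral.
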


\begin{proof}
Let $n\in\na,n>N_0,$ be arbitrary. Then, for all $t\in[0,T]$ we have 
\begin{align}\label{eq_identification_Mndelta_1}
    \erwws{\left\|M_n(t)-\int_0^t B_n(s,\vnd(s))\,d\mW_s\right\|_2^2}=\sum_{k\in\na}\erwws{\langle M_n(t)-\int_0^tB_n(s,\vnd(s))\,d\mW_s,e_k\rangle_{L^2}^2},
\end{align}
where for any $k\in\na$
\begin{align}\label{eq_identification_Mndelta_2}
\begin{aligned}
    &\erwws{\langle M_n(t)-\int_0^tB_n(s,\vnd(s))\,d\mW_s,e_k\rangle_{L^2}^2}\\
    &=\erwws{\langle M_n(t),e_k\rangle_{L^2}^2}-\erwws{\langle M_n(t),e_k\rangle_{L^2}\langle \int_0^t B_n(s,\vnd(s))\,d\mW_s,e_k\rangle_{L^2}}\\
    &\quad+\erwws{\langle \int_0^tB_n(s,\vnd(s))\,d\mW_s,e_k\rangle_{L^2}^2}.
\end{aligned}
\end{align}
By \eqref{quadrVar_Mndelta} and \eqref{law_Mndelta} we know for all $\tiT$
\begin{align*}
    &\sum_{k\in\na}\erwws{\langle\int_0^t B_n(s,\vnd(s))\,d\mW_s,e_k\rangle_{L^2}^2}\\
    &=\sum_{k\in\na}\erwws{\langle M_n(t),e_k\rangle_{L^2}^2}\\
    &=\erwws{\int_0^t\operatorname{Tr}\left[\left(B_n(s,\vnd(s))Q^\halbe\right)\circ\left(B_n(s,\vnd(s))Q^\halbe\right)^\ast \right]\,ds }.
\end{align*}
Using \eqref{qudarVar_W_Mndelta} we further obtain for $t\in[0,T]$
\begin{align*}
    &\sum_{k\in\na}\erwws{\langle M_n(t),e_k\rangle_{L^2}\langle\int_0^t B_n(s,\vnd(s))\,d\mW_s,e_k\rangle_{L^2}}\\
    &=\erwws{\operatorname{Tr}\left[\langle\langle M_n(\cdot),\int_0^\cdot B_n(s,\vnd(s))\,d\mW_s\rangle\rangle_t\right]}\\
    &=\erwws{\int_0^t\operatorname{Tr}\left[\left(B_n(s,\vnd(s))Q^\halbe\right)\circ\left(B_n(s,\vnd(s))Q^\halbe\right)^\ast\right]\,ds }.
\end{align*}
Therefore we get from \eqref{eq_identification_Mndelta_1} and \eqref{eq_identification_Mndelta_2} for all $\tiT$
\begin{align*}
    \erwws{\left\|M_n(t)-\int_0^tB_n(s,\vnd(s))\,d\mW_s\right\|_2^2}=0.
\end{align*}
\end{proof}

\begin{lem}
The filtration $(\mathcal{F}_t^n)_{t\in[0,T]}$ can be chosen independently of $n$.
\end{lem}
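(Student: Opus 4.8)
The plan is to exploit the fact, recalled in the remark following \eqref{P-a.s.-conv_vndelta}, that $\mathcal{W}$ and $v_0$ do not depend on $n$: I will show that each copy $v_n$ is already adapted to the ($n$-independent) augmented filtration generated by $\mathcal{W}$ and $v_0$, so that the filtrations $(\mathcal{F}_t^n)_{t\in[0,T]}$ all coincide with it. Concretely, let $(\mathcal{F}_t)_{t\in[0,T]}$ be the right-continuous, $\mathbb{P}'$-augmented filtration generated by $\mathcal{W}(s)$, $0\le s\le t$, and by $v_0$; this filtration is manifestly independent of $n$. The whole task is then to prove that $v_n$ is $(\mathcal{F}_t)_{t\in[0,T]}$-adapted, since adding the information carried by $v_n$ to that of $\mathcal{W}|_{[0,t]}$ and $v_0$ produces nothing new, giving $\mathcal{F}_t^n=\mathcal{F}_t$ for every $n>N_0$ and every $t\in[0,T]$.

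To obtain the adaptedness of $v_n$ I would first argue on the original stochastic basis. By Proposition~\ref{teo_existence_appr_solution} the equation \eqref{apprEqu} possesses a unique solution, and the local monotonicity estimate of Lemma~\ref{lem_H2} yields pathwise uniqueness by Gronwall's lemma applied to $\|u-v\|_2^2$. Existence together with pathwise uniqueness allows me to invoke the Yamada--Watanabe principle in the variational framework (see \cite{LR15}): the solution is strong, and by pathwise uniqueness $u_n$ coincides $\mathbb{P}$-a.s. with $\Phi_n(W,u_0)$ for a Borel-measurable, progressively measurable map $\Phi_n$ defined on the path space of $(W,u_0)$. In particular, for every $t\in[0,T]$ the random variable $u_n(t)$ is measurable with respect to $\sigma(W|_{[0,t]},u_0)$ completed by the $\mathbb{P}$-null sets.

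Next I would transfer this functional representation to the new probability space by equality in law. For fixed $t$, the set
\[
E_n^t:=\{(x,w,a)\in\mathcal{X}:x(t)=\Phi_n^t(w|_{[0,t]},a)\}
\]
is Borel in $\mathcal{X}$ and satisfies $\mathcal{L}(u_n,W,u_0)(E_n^t)=1$. Since $(v_n,\mathcal{W},v_0)$ has the same law as $(u_n,W,u_0)$ in $\mathcal{X}$, the same set has full measure under $\mathcal{L}(v_n,\mathcal{W},v_0)$, whence $v_n(t)=\Phi_n^t(\mathcal{W}|_{[0,t]},v_0)$ $\mathbb{P}'$-a.s. Thus $v_n(t)$ agrees almost surely with a $\sigma(\mathcal{W}|_{[0,t]},v_0)$-measurable random variable and is therefore $\mathcal{F}_t$-measurable; running $t$ through a countable dense subset of $[0,T]$ and using the continuity of $v_n$ in $L^2(D)$ upgrades this to adaptedness of the whole process $v_n$ to $(\mathcal{F}_t)_{t\in[0,T]}$. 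Consequently $\mathcal{F}_t^n=\mathcal{F}_t$ for all $n>N_0$, and since Lemma~\ref{lem_W_Q-Wiener_process} already identifies $\mathcal{W}$ as a $Q$-Wiener process with respect to $(\mathcal{F}_t^n)_{t\in[0,T]}$, all assertions established so far, in particular Lemma~\ref{lem_appr_Equ}, remain valid verbatim with the common filtration $(\mathcal{F}_t)_{t\in[0,T]}$.

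The point I expect to be the main obstacle is the justification of the strong-solution property of $u_n$ on the original basis: one must verify that the hypotheses of Liu--Röckner and of the Yamada--Watanabe principle are genuinely met by the perturbed operator $A_n$ and the Lipschitz noise $B_n$, using Lemmas~\ref{lem_H2}--\ref{lem_H4} together with the growth bound \eqref{wachstumsbed_rho}. The remainder is measure-theoretic bookkeeping, namely the Borel measurability of the graph set $E_n^t$ in the Polish space $\mathcal{X}$ and the passage from almost-sure equality at each fixed time to adaptedness of the trajectory. I would emphasize that no joint law of the full family $(v_m)_{m>N_0}$ is required here, only the single-index equality in law of the triples $(v_n,\mathcal{W},v_0)$ and $(u_n,W,u_0)$, which is exactly what Skorokhod's theorem provides.
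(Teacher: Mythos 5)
Your argument is correct in substance, but it takes a genuinely different and noticeably heavier route than the paper. The paper works entirely on the new probability space: it sets $\widetilde{\mathcal{F}}_t$ to be the augmented filtration generated by $v_0$ and $\mathcal{W}|_{[0,t]}$, checks (exactly as in Lemma~\ref{lem_W_Q-Wiener_process}) that $\mathcal{W}$ is a $Q$-Wiener process for this filtration, solves the approximate equation \eqref{apprEqu} on the basis $(\Omega',\mathcal{A}',\mathbb{P}',(\widetilde{\mathcal{F}}_t)_t)$ via Proposition~\ref{teo_existence_appr_solution} to obtain an $\widetilde{\mathcal{F}}_t$-adapted solution $\widetilde u_n$, and then concludes $\widetilde u_n=v_n$ by uniqueness, since $v_n$ is itself a solution on $\Omega'$ by Lemma~\ref{lem_appr_Equ}; adaptedness of $v_n$ to the $n$-independent filtration follows at once. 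You instead establish a measurable solution functional $u_n=\Phi_n(W,u_0)$ on the \emph{original} basis via the Yamada--Watanabe principle and transport the identity $u_n(t)=\Phi_n^t(W|_{[0,t]},u_0)$ to $\Omega'$ through equality in law of the triples. Both proofs rest on the same single ingredient, namely pathwise uniqueness for the Lipschitz-regularized equation (already contained in Proposition~\ref{teo_existence_appr_solution} via Lemma~\ref{lem_H2}), but your version pays for the functional representation with two technical points you should make explicit: first, to apply Yamada--Watanabe with respect to the filtration generated by $(W,u_0)$ you need $W$ to be a Wiener process for that smaller filtration, which holds because $u_0$ is $\mathcal{F}_0$-measurable and the increments of $W$ are independent of $\mathcal{F}_0$; second, the map $\Phi_n^t$ produced by Yamada--Watanabe is in general only measurable with respect to the completion of the Borel $\sigma$-algebra under the law of $(W|_{[0,t]},u_0)$, so your set $E_n^t$ need not be Borel in $\mathcal{X}$ --- it lies in that completion, which is still sufficient for the transfer precisely because $(\mathcal{W},v_0)$ and $(W,u_0)$ have the same law. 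The paper's re-solve-and-identify argument sidesteps both issues, which is what makes it the shorter proof; your approach has the mild advantage of exhibiting $v_n$ explicitly as a functional of $(\mathcal{W},v_0)$ rather than only proving adaptedness.
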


\begin{proof}
Let $(\mF_t)_{t\in[0,T|}$ be the smallest filtration in $\mathcal{A}'$ generated by $v_0$ and $\mW(s)$ for $0\leq s\leq t\leq T$ and augmented in order to satisfy the usual assumptions. One may show as before that $\mW$ is a $Q$-Wiener process with respect to $\mFt$. Applying the arguments of Section~\ref{Sec_ApprEqu} to the stochastic basis $(\Omega',\mathcal{A}',\mP,\mFt)$, associated with $\mW$, there exists for any $n\in\na$ a unique solution $\widetilde{u}_n$ to the approximated equation with the $\widetilde{\mathcal{F}}_0$-measurable initial datum $v_0$. By uniqueness $\widetilde{u}_n=v_n$.
\end{proof}

\begin{lem}\label{lem_conv_int_Bn}
    For all $t\in[0,T]$ the convergence
    \begin{align*}
        \int_0^t B_n(s,\vnd(s))\,d\mW_s\rightarrow\int_0^t B(s,u_\infty(s))\,d\mW_s\text{ in }L^2(\Omega';L^2(D))
    \end{align*}
    holds true for $n\rightarrow\infty$.
\end{lem}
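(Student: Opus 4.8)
The plan is to use Itô's isometry to convert the claimed convergence of the stochastic integrals into the convergence of the integrands in the Hilbert--Schmidt norm, and then to control the integrand by a triangle-inequality split: one piece handled by the uniform convergence $\sigma_n\to\sigma$ from Proposition~\ref{231124_p1}~$v)$, the other by the Hölder estimate \eqref{estimateBsigma} together with the strong convergence $v_n\to u_\infty$ from Lemma~\ref{lem_conv_new_probabilityspace}~$i)$. First I would note that both integrands are adapted and square-integrable (by \eqref{B_bounded} and Corollary~\ref{lem_Lipschitz_B_n}~$iii)$), so that for every $t\in[0,T]$ Itô's isometry gives
\begin{align*}
\erwws{\left\|\int_0^t B_n(s,\vnd(s))\,d\mW_s-\int_0^t B(s,u_\infty(s))\,d\mW_s\right\|_2^2}
&=\erwws{\int_0^t\left\|\left(B_n(s,\vnd(s))-B(s,u_\infty(s))\right)Q^\halbe\right\|_{\HS}^2\,ds}\\
&\leq\operatorname{Tr}(Q)\,\erwws{\int_0^t\|B_n(s,\vnd(s))-B(s,u_\infty(s))\|_{\HS}^2\,ds},
\end{align*}
where I used $\|\Psi Q^\halbe\|_{\HS}\leq\|\Psi\|_{\HS}\|Q^\halbe\|_{\HS}$ and $\|Q^\halbe\|_{\HS}^2=\operatorname{Tr}(Q)<\infty$. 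It thus suffices to show that the right-hand side vanishes as $n\to\infty$.

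I would then insert $\pm B(s,\vnd(s))$ and use $\|a+b\|^2\leq 2\|a\|^2+2\|b\|^2$ to bound the right-hand side by $2(I_n+II_n)$, with $I_n:=\erwws{\int_0^t\|B_n(s,\vnd(s))-B(s,\vnd(s))\|_{\HS}^2\,ds}$ and $II_n:=\erwws{\int_0^t\|B(s,\vnd(s))-B(s,u_\infty(s))\|_{\HS}^2\,ds}$. For $I_n$, the same computation as in \eqref{estimateBsigma}, now comparing $\sigma_n$ and $\sigma$ at the common argument $\vnd$, yields
\begin{align*}
\|B_n(s,\vnd(s))-B(s,\vnd(s))\|_{\HS}^2\leq C_k\int_D|\sigma_n(s,\vnd(s,x))-\sigma(s,\vnd(s,x))|^2\,dx\leq C_k|D|\sup_{s\in U}\sup_{\lambda\in\re}|\sigma_n(s,\lambda)-\sigma(s,\lambda)|^2.
\end{align*}
The point is that this bound is independent of the ($n$-dependent) argument $\vnd$, so integrating over $[0,t]\times\Omega'$ gives $I_n\leq C_k|D|T\big(\sup_{s\in U}\sup_{\lambda}|\sigma_n(s,\lambda)-\sigma(s,\lambda)|\big)^2$, which tends to $0$ by Proposition~\ref{231124_p1}~$v)$.

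For $II_n$ the Hölder estimate \eqref{estimateBsigma} gives $II_n\leq C_kL_\alpha^2C(\alpha)\,\erwws{\int_0^t\|\vnd(s)-u_\infty(s)\|_2^{2\alpha}\,ds}$. Since $\alpha\in(0,1)$ we have $2\alpha<2$, and Hölder's inequality in time with exponents $1/\alpha$ and $1/(1-\alpha)$ gives $\int_0^t\|\vnd(s)-u_\infty(s)\|_2^{2\alpha}\,ds\leq T^{1-\alpha}\big(\int_0^T\|\vnd(s)-u_\infty(s)\|_2^2\,ds\big)^\alpha$, so that
\begin{align*}
II_n\leq C_kL_\alpha^2C(\alpha)T^{1-\alpha}\,\erwws{\|\vnd-u_\infty\|_{L^2(0,T;\lz)}^{2\alpha}}.
\end{align*}
Choosing $\varrho=2\alpha<2$ and $s=2$ in Lemma~\ref{lem_conv_new_probabilityspace}~$i)$ shows that the right-hand side tends to $0$. (Alternatively, $II_n\to0$ follows from the $\mP'$-a.s. convergence \eqref{P-a.s.-conv_vndelta} via Vitali's theorem, the required uniform integrability being provided by \eqref{vndelta_bounded} raised to the power $1/\alpha>1$.)

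The main obstacle is the term $II_n$: because $\sigma$ is only Hölder and not Lipschitz, the difference $B(s,\vnd)-B(s,u_\infty)$ is controlled only by the sublinear power $\|\vnd-u_\infty\|_2^{2\alpha}$ of the $L^2$-distance. The sublinearity $2\alpha<2$ is precisely what makes the argument work, since the stochastic compactness construction only delivers $L^\varrho(\Omega')$-convergence of $v_n$ for exponents $\varrho<2$ (Lemma~\ref{lem_conv_new_probabilityspace}~$i)$); matching $\varrho=2\alpha$ to the Hölder power is the key step. By contrast, $I_n$ is harmless because the inf-convolution error is estimated uniformly in the spatial argument, removing any dependence on the approximating sequence.
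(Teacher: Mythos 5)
Your proof is correct and follows essentially the same route as the paper: the same decomposition via $\pm B(s,v_n(s))$, the uniform inf-convolution error bound from Proposition~\ref{231124_p1}~$v)$ for the first piece, and the Hölder estimate combined with Lemma~\ref{lem_conv_new_probabilityspace}~$i)$ (with $\varrho=2\alpha<2$) for the second. The only cosmetic differences are that the paper concludes via the Burkholder--Davis--Gundy inequality where you use the Itô isometry, and it absorbs the time-integrability bookkeeping into an embedding into $L^{2\alpha}(\Omega';L^{2\alpha}((0,T)\times D))$ where you use Hölder's inequality in time; both variants are valid.
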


\begin{proof}
For all $n\in\na,n>N_0$, a.e. $t\in(0,T)$, $\mP'$-a.s. in $\Omega'$ we know by applying Parseval identitiy
\begin{align*}
    &\|B_n(t,v_n(t))-B(t,\vnd(t))\|_{\HS}^2\\
    &=\sum_{k\in\na}\int_D\left|\int_D\left[\sigma_n(t,\vnd(t,x))-\sigma(t,\vnd(t,x))\right]k(x,y)e_k(y)\,dy\right|^2 dx\\
    &=\int_D|\sigma_n(t,\vnd(t,x))-\sigma(t,\vnd(t,x))|^2\sum_{k\in\na}|\langle k(x,\cdot),e_k\rangle_{L^2}|^2\,dx\\
    &=\int_D|\sigma_n(t,\vnd(t,x))-\sigma(t,\vnd(t,x))|^2\|k(x,\cdot)\|_2^2\,dx\\
    &\leq C_k\|\sigma_n(t,\vnd(t))-\sigma(t,\vnd(t))\|_2^2.
\end{align*}
Note that for all $n\in\na,n>N_0,$ and a.e. $(t,x)\in(0,T)\times D$, $\mP'$-a.s. in $\Omega'$ by using (S2a) and Proposition \ref{231124_p1} $ii)$
\begin{align*}
    |\sigma_n(t,\vnd(t,x))-\sigma(t,\vnd(t,x))|
    &=\sup_{\mu\in\re}\left\{\sigma(t,\vnd(t,x)-\sigma(t,\mu)-n|\vnd(t,x)-\mu|\right\}\\
    &\leq \sup_{\mu\in\re}\left\{L_\alpha |\vnd(t,x)-\mu|^\alpha-n|\vnd(t,x)-\mu|\right\}\\
    &\leq\sup_{r\in[0,\infty)}\left\{L_\alpha r^\alpha-nr\right\},
\end{align*}
where this last term converges to zero as shown in the proof of Proposition \ref{231124_p1} $iv)$. Moreover, we know for all $n>N_0$, a.e. $(t,x)\in(0,T)\times D$, $\mP'$-a.s. in $\Omega'$
\begin{align*}
    |\sigma_n(t,\vnd(t,x))-\sigma(t,\vnd(t,x))|^2\leq\max_{r\in[0,\infty)}\{L_\alpha r^\alpha-nr\}^2\leq\frac{(1-\alpha)^2}{L^\alpha \alpha^{\frac{1+\alpha}{\alpha-1}}},
\end{align*}
see proof of Proposition \ref{231124_p1} $iv)$. From Lebesgues dominated convergence theorem we obtain for $n\rightarrow\infty$
\begin{align}\label{eq_conv_B_n_vnd_1}
    \erwws{\int_0^T\int_D|\sigma_n(t,\vnd(t,x))-\sigma(t,\vnd(t,x))|^2\,dx\,dt}\rightarrow0.
\end{align}
Since for any $\alpha\in(0,1)$ there exist $\varrho<2$ and $1\leq s<\infty$ such that $L^\varrho(\Omega';L^s(0,T;L^2(D)))\linebreak\hookrightarrow L^{2\alpha}(\Omega';L^{2\alpha}(0,T;L^{2\alpha}(D)))$ holds true we have
\begin{align}\label{eq_conv_B_n_vnd_2}
    \erwws{\int_0^T\|B(t,\vnd(t))-B(t,u_\infty(t))\|_{\HS}^2\,dt}\leq C_kL_\alpha^2\erwws{\int_0^T\|\vnd(t)-u_\infty(t)\|_{2\alpha}^{2\alpha}\,dt}\rightarrow 0
\end{align}
for $n\rightarrow\infty$ by Lemma \ref{lem_conv_new_probabilityspace} $i)$. Therefore, we obtain by \eqref{eq_conv_B_n_vnd_1} and \eqref{eq_conv_B_n_vnd_2}
\begin{align}\label{conv_Bn(vnd)_to_B(uinfty)}
    B_n(\cdot,\vnd)\rightarrow B(\cdot,u_\infty)\text{ in } L^2(\Omega';L^2(0,T;\HS(L^2(D))).
\end{align}
Using Burkholder-Davis-Gundy inequality we get for $n\rightarrow\infty$
\begin{align*}
    \int_0^\cdot B_n(s,\vnd(s))\,d\mW_s\rightarrow \int_0^\cdot B(s,u_\infty(s))\,d\mW_s\text{ in }L^2(\Omega';C([0,T];\lz))
\end{align*}
and for all $\tiT$
\begin{align*}
    \int_0^t B_n(s,\vnd(s))\,d\mW_s\rightarrow\int_0^t B(s,u_\infty(s))\,d\mW_s\text{ in }L^2(\Omega';\lz).
\end{align*}
\end{proof}

\begin{prop}\label{lem_limit_Equ_without_identification}
The function $u_\infty$ is a $\mFt$-adapted, square integrable stochastic process with continuous paths in $\lz$ such that $u_\infty(0)=v_0$. Moreover, $u_\infty\in L^p(\Omega';L^p(0,T;W^{1,p}_0(D)))$ and
\begin{align*}
    \partial_t\left(u_\infty-\int_0^\cdot B(s,u_\infty(s))\,d\mW_s\right)-\diver G+f(u_\infty)=0 \text{ in }L^{p'}(\Omega;L^{p'}(0,T;W^{-1,p'}(D))).
\end{align*}
\end{prop}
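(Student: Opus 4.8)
The plan is to pass to the limit $n\to\infty$ in the pathwise identity of Lemma~\ref{lem_appr_Equ}, which I rewrite in integrated form as
\begin{align*}
v_n(t)-\int_0^t B_n(s,v_n(s))\,d\mW_s
= v_0+\int_0^t\diver a(\cdot,v_n(s),\nabla v_n(s))\,ds
-\int_0^t\tfrac{1}{n}\partial J(v_n(s))\,ds-\int_0^t f(v_n(s))\,ds .
\end{align*}
The limits of four of the five terms are already prepared: $v_n\to u_\infty$ and $f(v_n)\to f(u_\infty)$ by Lemma~\ref{lem_conv_new_probabilityspace}~$i)$ and~$iv)$, $a(\cdot,v_n,\nabla v_n)\rightharpoonup G$ by~$iii)$ (so that $\diver a(\cdot,v_n,\nabla v_n)\rightharpoonup\diver G$ since the divergence is linear and continuous), and the stochastic integral converges by Lemma~\ref{lem_conv_int_Bn}. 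The only genuinely new ingredient is that the singular perturbation disappears.

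First I would show $\tfrac{1}{n}\partial J(v_n)\to 0$ in $L^{q'}(\Omega';L^{q'}(0,T;\sobolevmqs))$. Repeating the duality estimate from the proof of Lemma~\ref{lem_bound_derivative_u_n-B_n} and using the embedding $\sobolevmq\hookrightarrow H^m_0(D)$ gives
\begin{align*}
\Big\|\tfrac{1}{n}\partial J(v_n)\Big\|_{\sobolevmqs}^{q'}
\leq \frac{C}{n^{q'}}\big(\|v_n\|_{\sobolevmq}^{q'}+\|v_n\|_{\sobolevmq}^{(q-1)q'}\big).
\end{align*}
Since $(q-1)q'=q$ and $q'\leq q$, integrating in time, taking $\erwws{\cdot}$, and invoking the bound $\tfrac{1}{n}\erwws{\int_0^T\|v_n\|_{\sobolevmq}^q\,ds}\leq C$ from Lemma~\ref{lemboundungradun} (which holds for $v_n$ by equality in law) yields a bound of order $n^{1-q'}$, which tends to $0$ because $q'>1$.

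Passing to the limit then gives, first against test functions $\varphi\in\sobolevmq$ (paired with adapted scalar test processes in time) and afterwards, by density of $\sobolevmq$ in $\sobolevp$ together with continuity of the remaining terms in the $\sobolevp$-norm of $\varphi$, against all $\varphi\in\sobolevp$, the integrated identity
\begin{align*}
u_\infty(t)-\int_0^t B(s,u_\infty(s))\,d\mW_s=v_0+\int_0^t\diver G\,ds-\int_0^t f(u_\infty(s))\,ds
\end{align*}
in $\sobolevq$; differentiating in $t$ in the sense of $\sobolevq$-valued distributions produces the asserted equation in $L^{p'}(\Omega';L^{p'}(0,T;\sobolevq))$. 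I emphasize that $G$ is \emph{not} identified as $a(\cdot,u_\infty,\nabla u_\infty)$ at this stage; that identification is deferred to the subsequent pseudomonotonicity/Minty step, which is exactly why the statement is phrased ``without identification''.

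Finally the qualitative properties of $u_\infty$ must be checked. The bound $u_\infty\in L^p(\Omega';L^p(0,T;\sobolevp))$ follows from weak lower semicontinuity together with Lemma~\ref{lem_conv_new_probabilityspace}~$ii)$, square integrability from Lemma~\ref{lem_sup_u_n_bound}, $\mFt$-adaptedness from the $n$-independence of the filtration and the $\mP'$-a.s., a.e.-in-$t$ convergence $v_n(t)\to u_\infty(t)$ in $\lz$ along a subsequence, and $u_\infty(0)=v_0$ from $v_n(0)=v_0$. The main obstacle is the continuity of paths: because $\int_0^\cdot B(s,u_\infty(s))\,d\mW_s$ takes values only in $\lz$ and not in $\sobolevp$, the purely deterministic Gelfand-triple regularity lemma does \emph{not} apply directly to $u_\infty-\int_0^\cdot B(s,u_\infty(s))\,d\mW_s$. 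Instead I would invoke the stochastic It\^o-formula/regularity result for the Gelfand triple $\sobolevp\hookrightarrow\lz\hookrightarrow\sobolevq$ (in the spirit of \cite{LR15}), applied to the representation above with $\diver G\in L^{p'}(0,T;\sobolevq)$ and $f(u_\infty)\in\lz$, to obtain a continuous $\lz$-valued modification of $u_\infty$.
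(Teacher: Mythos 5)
Your proposal is correct and follows essentially the same route as the paper: pass to the limit term by term in the approximate equation, kill the singular perturbation using the $\frac{1}{n}$-weighted a priori bound from Lemma~\ref{lemboundungradun}, use the prepared convergences for $a(\cdot,v_n,\nabla v_n)$, $f(v_n)$ and the stochastic integral, and invoke the variational regularity/It\^o theorem of \cite{LR15} for the continuous $L^2(D)$-valued modification and adaptedness. The only point the paper treats more carefully is $u_\infty(0)=v_0$, which it obtains from the weak convergence of $v_n-\int_0^\cdot B_n(s,v_n)\,d\mW_s$ in $L^{q'}(\Omega';C([0,T];W^{-m,q'}(D)))$ (so that evaluation at $t=0$ passes to the limit), rather than from a.e.-in-$t$ convergence alone.
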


\begin{remark}
    If $u_\infty$ is given as in Proposition~\ref{lem_limit_Equ_without_identification}, in particular we have for all $\tiT$
    \begin{align*}
        u_\infty(t)-v_0-\int_0^t G(s)-f(u_\infty(s))\,ds=\int_0^tB(s,u_\infty(s))\,d\mW_s \text{ in }L^2(D)\text{ a.s. in }\Omega'.
    \end{align*}
\end{remark}

\begin{proof}
By \eqref{u_n-B_n_bounded} and equality in law we know that there exists a not relabeled subsequence such that for $n\rightarrow\infty$
\begin{align}\label{conv_vnd-int_B_n_in_C}
    \vnd-\int_0^\cdot B_n(s,\vnd(s))\,d\mW_s\rightharpoonup u_\infty-\int_0^\cdot B(s,u_\infty(s))\,d\mW_s\text{ in }L^{q'}(\Omega';\mathfrak{V}).
\end{align}
Since $\mathfrak{V}$ is continuously and densely embedded into $C([0,T];\sobolevmqs)$, the weak convergence holds true in $L^{q'}(\Omega';C([0,T];\sobolevmqs))$. In particular, by Lemma \ref{lem_conv_int_Bn}, $\vnd\rightharpoonup u_\infty$ in $L^{q'}(\Omega';C([0,T];\sobolevmqs))$ and hence $\vnd(t)\rightharpoonup u_\infty(t)$ in $L^{q'}(\Omega';\sobolevmqs)$ for all $\tiT$. Because $(\vnd)_{n>N_0}$ is bounded in $L^2(\Omega';C([0,T];\lz))$ by Lemma \ref{lem_sup_u_n_bound} and equality in law, it follows that $(\vnd(t))_{n>N_0}$ is bounded in $L^2(\Omega';L^2(D))$ and we may conclude $\vnd(t)\rightharpoonup u_\infty(t)$ in $L^2(\Omega';L^2(D))$ for all $\tiT$ up to a subsequence which may depend on $\tiT$. Therefore, we have $u_\infty(0)=v_0$.\\
Let $A\in\mathcal{A}',\xi\in C_c^\infty((0,T))$, and $\varphi\in C_c^\infty(D)$, then for all $n\in\na, n>N_0,$ there holds by Lemma \ref{lem_appr_Equ}
\begin{align*}
    0=&\int_A\int_0^T\xi(t)\langle\partial_t\left(\vnd(t)-\int_0^t B_n(s,\vnd(s))\,d\mW_s\right),\varphi\rangle_{q',q}\,dt\,d\mP'\\
    &+\int_A\int_0^T\xi(t)\frac{1}{n} j(\vnd(t),\varphi)\,dt\,d\mP'+\int_A\int_0^T\int_D\xi(t) a(x,\vnd(t),\nabla\vnd(t))\cdot\nabla\varphi\,dx\,dt\,d\mP'\\
    &+\int_A\int_0^T\int_D \xi(t)f(\vnd(t))\varphi\,dt\,d\mP'\\
    =: &I_1+I_2+I_3+I_4.
\end{align*}
Using partial integration (see \cite[Proposition 2.5.2]{D01}), Lemma \ref{lem_conv_new_probabilityspace} $i)$, and \eqref{conv_vnd-int_B_n_in_C} we get
\begin{align}\label{eq_conv_I1}
\begin{aligned}
    I_1&=\int_A\langle\int_0^T\xi(t)\partial_t\left(\vnd(t)-\int_0^t B_n(s,\vnd(s))\,d\mW_s\right)\,dt,\varphi\rangle_{q',q}\,d\mP'\\
    &=-\int_A\int_D\int_0^T\xi'(t)\left(\vnd(t)-\int_0^tB_n(s,\vnd(s))\,d\mW_s\right)\varphi\,dt\,dx\,d\mP'\\
    &\rightarrow -\int_A\int_D\int_0^T\xi'(t)\left(u_\infty(t)-\int_0^t B(s,u_\infty(s))\,d\mW_s\right)\varphi\,dt\,dx\,d\mP'\\
    &=\int_A\int_0^T\xi(t)\langle\partial_t\left(u_\infty(t)-\int_0^tB(s,u_\infty(s))\,d\mW_s\right),\varphi\rangle_{q',q}\,dt\,d\mP'.
\end{aligned}
\end{align}
Moreover, there holds for all $n>N_0$
\begin{align*}
    I_2&=\int_A\int_0^T\xi(t)\frac{1}{n} \left[(\vnd(t),\varphi)_{H_0^m}+\int_D\sum_{|\gamma|\leq m}|D^\gamma \vnd(t)|^{q-2}D^\gamma\vnd(t)\cdot D^\gamma\varphi\,dx\right]\,dt\,d\mP'\\
    &\leq \|\xi\|_\infty\int_A\int_0^T\frac{1}{n} \left[\|\vnd(t)\|_{H_0^m}\|\varphi\|_{H_0^m}+\|\vnd(t)\|_{W_0^{m,q}}^{q-1}\|\varphi\|_{W^{m,q}_0}\right]\,dt\,d\mP'\\
    &\leq \|\xi\|_\infty\|\varphi\|_{W^{m,q}_0}\int_A\int_0^TC_E \frac{1}{n^\halbe} \frac{1}{n^\halbe} \|\vnd(t)\|_{H_0^m}+\frac{1}{n^{\frac{1}{q}}} \frac{1}{n^\frac{1}{q'}} \|\vnd(t)\|_{W^{m,q}_0}^{q-1}\,dt\,d\mP'\\
    &\leq\|\xi\|_\infty\|\varphi\|_{W^{m,q}_0}\Bigg[C_E\frac{1}{n^\halbe} \left(\erwws{\int_0^T\frac{1}{n} \|\vnd(t)\|_{H_0^m}^2\,dt}\right)^\halbe\\
    &\quad+\frac{1}{n^{\frac{1}{q}}} \left(\erwws{\int_0^T\frac{1}{n} \|\vnd(t)\|_{W^{m,q}_0}^q\,dt}\right)^{\frac{1}{q'}} \Bigg].
\end{align*}
By Lemma \ref{lemboundungradun} and equality in law $\erwws{\int_0^T\frac{1}{n} \|\vnd(t)\|_{H_0^m}^2\,dt}$ and $\erwws{\int_0^T\frac{1}{n} \|\vnd(t)\|_{W^{m,q}_0}^q\,dt}$ are bounded by a constant independent of $n$. Hence, we obtain
\begin{align}\label{conv_delta_j}
\lim_{n\rightarrow\infty}I_2=0.
\end{align}
Lemma \ref{lem_conv_new_probabilityspace} $iii),iv)$, \eqref{eq_conv_I1}, and \eqref{conv_delta_j} provide for all $A\in\mathcal{A}',\xi\in C_c^\infty((0,T)),$ and $\varphi\in C_c^\infty(D)$
\begin{align}\label{eq_tested_limit_equation}
\begin{aligned}
    0=&\int_A\int_0^T\xi(t)\langle\partial_t\left(u_\infty(t)-\int_0^t B(s,u_\infty(s))\,d\mW_s\right),\varphi\rangle_{q',q}\,dt\,d\mP'\\
    &+\int_A\int_0^T\int_D\xi(t) G(t)\cdot\nabla\varphi\,dx\,dt\,d\mP'+\int_A\int_0^T\int_D \xi(t)f(u_\infty(t))\varphi\,dt\,d\mP'.
\end{aligned}
\end{align}
We already know that $\nabla \vnd\rightharpoonup \nabla u_\infty$ in $L^p(\Omega';L^p(0,T;L^p(D)))$ and in particular $u_\infty\in L^p(\Omega';L^p(0,T;W^{1,p}_0(D)))$. Now, from equation \eqref{eq_tested_limit_equation} it follows that $u_\infty\in L^{\min\{p',2\}}(\Omega';\linebreak C([0,T];W^{-1,p'}(D)))$ and that
\begin{align}\label{eq_limit_eq_on_Omegaprime}
    u_\infty(t)-u_0-\int_0^t\diver G(s)-f(u_\infty(s))\,ds=\int_0^t B(s,u_\infty(s))\,d\mW_s\text{ in }W^{-1,p'}(D)
\end{align}
a.s. in $\Omega'$ for all $\tiT$. From \eqref{eq_limit_eq_on_Omegaprime} by \cite[Theorem 4.2.5]{LR15} $u_\infty$ is a $\mFt$-adapted, square-integrable stochastic process with continuous paths in $\lz$ and Itô's formula holds.
\end{proof}

\begin{lem}\label{lem_identification_G}
There holds
\begin{align*}
    G=a(\cdot,u_\infty,\nabla u_\infty)\text{ in }L^{p'}(\Omega';L^{p'}(0,T;L^{p'}(D))).
\end{align*}
\end{lem}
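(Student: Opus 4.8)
The plan is to identify $G$ by a Minty--Browder monotonicity argument adapted to the pseudomonotone structure, the essential input being an energy (in)equality obtained from Itô's formula. First I would apply Itô's formula to $t\mapsto\|\vnd(t)\|_2^2$ for the approximate equation satisfied by $\vnd$ on $\Omega'$ (Lemma~\ref{lem_appr_Equ}), and to $t\mapsto\|u_\infty(t)\|_2^2$ for the limit equation, the Itô formula being available by Proposition~\ref{lem_limit_Equ_without_identification} via \cite[Theorem 4.2.5]{LR15}. Taking expectations annihilates the martingale parts, and the resulting identity for $\vnd$ reads
\begin{align*}
\erwws{\|\vnd(T)\|_2^2}&+2\erwws{\int_0^T\int_D a(x,\vnd,\nabla\vnd)\cdot\nabla\vnd\,dx\,ds}+\frac{2}{n}\erwws{\int_0^T j(\vnd,\vnd)\,ds}\\
&+2\erwws{\int_0^T\int_D f(\vnd)\vnd\,dx\,ds}=\erwws{\|v_0\|_2^2}+\erwws{\int_0^T\|B_n(s,\vnd)\|_\HS^2\,ds}.
\end{align*}
Since $\frac{1}{n}j(\vnd,\vnd)\geq0$, I can discard it to obtain an upper bound for the dissipation term. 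Passing to the $\limsup$ and using the weak lower semicontinuity $\liminf_n\erwws{\|\vnd(T)\|_2^2}\geq\erwws{\|u_\infty(T)\|_2^2}$ (from $\vnd(T)\rightharpoonup u_\infty(T)$ in $L^2(\Omega';\lz)$, established in Proposition~\ref{lem_limit_Equ_without_identification}), the convergence of the noise energy $\erwws{\int_0^T\|B_n(s,\vnd)\|_\HS^2\,ds}\to\erwws{\int_0^T\|B(s,u_\infty)\|_\HS^2\,ds}$ from \eqref{conv_Bn(vnd)_to_B(uinfty)}, and the convergence of the $f$-term from Lemma~\ref{lem_conv_new_probabilityspace}~$i)$, the corresponding energy identity for $u_\infty$ then yields
\begin{align*}
\limsup_{n\to\infty}\erwws{\int_0^T\int_D a(x,\vnd,\nabla\vnd)\cdot\nabla\vnd\,dx\,ds}\leq\erwws{\int_0^T\int_D G\cdot\nabla u_\infty\,dx\,ds}.
\end{align*}

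Next I would exploit the monotonicity (A1) in the gradient variable with the common first argument $\vnd$. Setting
\begin{align*}
E_n:=\erwws{\int_0^T\int_D\big(a(x,\vnd,\nabla\vnd)-a(x,\vnd,\nabla u_\infty)\big)\cdot(\nabla\vnd-\nabla u_\infty)\,dx\,ds}\geq0,
\end{align*}
I expand $E_n$ into three terms and pass to the limit: the first is controlled by the energy bound above; in the second, $a(x,\vnd,\nabla\vnd)\rightharpoonup G$ (Lemma~\ref{lem_conv_new_probabilityspace}~$iii)$) paired against the fixed $\nabla u_\infty\in L^p$ gives $\erwws{\int_0^T\int_D G\cdot\nabla u_\infty}$; in the third, $\nabla\vnd-\nabla u_\infty\rightharpoonup0$ weakly in $L^p$ is paired against $a(x,\vnd,\nabla u_\infty)$, which I claim converges strongly to $a(x,u_\infty,\nabla u_\infty)$ in $L^{p'}(\Omega';L^{p'}(0,T;L^{p'}(D)))$. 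Granting this, $\limsup_n E_n\leq0$, whence $E_n\to0$. A standard Leray--Lions / Boccardo--Murat type lemma then upgrades $E_n\to0$ to $\nabla\vnd\to\nabla u_\infty$ almost everywhere on $\Omega'\times(0,T)\times D$ along a subsequence, so that $a(x,\vnd,\nabla\vnd)\to a(x,u_\infty,\nabla u_\infty)$ a.e.; combined with the weak convergence to $G$ and uniqueness of limits, this gives $G=a(\cdot,u_\infty,\nabla u_\infty)$.

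The main obstacle is precisely the strong $L^{p'}$-convergence of the mixed term $a(x,\vnd,\nabla u_\infty)\to a(x,u_\infty,\nabla u_\infty)$, i.e. the point where the $u$-dependence of $a$ (controlled only by the Lipschitz-type bound (A3), not by monotonicity) must be absorbed. Here I would use (A3) to estimate
\begin{align*}
|a(x,\vnd,\nabla u_\infty)-a(x,u_\infty,\nabla u_\infty)|\leq\big(C_5|\nabla u_\infty|^{p-1}+h(x)\big)\,|\vnd-u_\infty|,
\end{align*}
and control the right-hand side by combining the strong convergence $\vnd\to u_\infty$ in $L^2$ (Lemma~\ref{lem_conv_new_probabilityspace}~$i)$) with the improved spatial integrability of $\vnd$ coming from the Sobolev embedding of $\sobolevp$, so that $\vnd\to u_\infty$ in $L^r$ for some $r$ strictly larger than $p$, which is what is needed to beat the $|\nabla u_\infty|^{p-1}$ weight; a Vitali argument along an a.e.-convergent subsequence then closes this step. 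A secondary technical point is the rigorous justification of the two Itô expansions and the vanishing of the stochastic integrals' expectations, together with checking that the perturbation $\frac{1}{n}j(\vnd,\vnd)$ is harmless — being non-negative, it may simply be discarded in the inequality direction we need.
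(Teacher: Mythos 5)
Your derivation of the energy inequality
\begin{align*}
\limsup_{n\to\infty}\erwws{\int_0^t\int_D a(x,\vnd,\nabla\vnd)\cdot\nabla\vnd\,dx\,ds}\leq\erwws{\int_0^t\int_D G\cdot\nabla u_\infty\,dx\,ds}
\end{align*}
is exactly the paper's argument: the two It\^{o} expansions, discarding the non-negative term $\frac{1}{n}j(\vnd,\vnd)$, weak lower semicontinuity of $\erwws{\|\cdot\|_2^2}$ at the fixed time, and the convergences \eqref{conv_Bn(vnd)_to_B(uinfty)} and of the $f$-term. Where you diverge is in closing the monotonicity argument: the paper simply invokes a stochastic version of Minty's trick (\cite[Lemma 8.8]{R13}), whereas you attempt to carry it out by hand.

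Your hand-made version has a genuine gap at the very last step. Hypothesis (A1) gives only $(a(x,\lambda,\xi)-a(x,\lambda,\eta))\cdot(\xi-\eta)\geq 0$, i.e.\ monotonicity, not \emph{strict} monotonicity in $\xi$. Consequently $E_n\to 0$ does not imply $\nabla\vnd\to\nabla u_\infty$ a.e.: the Leray--Lions/Boccardo--Murat lemma you appeal to needs the inequality in (A1) to be strict for $\xi\neq\eta$ (degenerate example: $a\equiv 0$ gives $E_n\equiv 0$ with no information whatsoever on the gradients, although the conclusion $G=0$ is then trivially true). The correct way to finish under mere monotonicity plus the Carath\'eodory continuity of $a$ in $\xi$ is the genuine Minty trick: run your computation with $\nabla u_\infty$ replaced by $\nabla v$ for arbitrary $v$, obtaining $\erwws{\int_0^t\int_D(G-a(x,u_\infty,\nabla v))\cdot(\nabla u_\infty-\nabla v)\,dx\,ds}\geq 0$, then take $v=u_\infty-\lambda w$, divide by $\lambda$ and let $\lambda\downarrow 0$; this is precisely what the cited lemma encapsulates. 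Note that this repaired route still hinges on your ``main obstacle'', the strong $L^{p'}$ convergence $a(x,\vnd,\nabla v)\to a(x,u_\infty,\nabla v)$. Your plan for it ((A3), a.e.\ convergence of $\vnd$ along a subsequence, Vitali against the fixed weight $|\nabla v|^{p-1}\in L^{p'}$) is sound in outline, but be aware that Lemma~\ref{lem_conv_new_probabilityspace}~$i)$ only gives convergence of $\vnd$ in $L^2$ in the space variable, so for $p>2$ the uniform-integrability check genuinely requires the a priori bound on $\nabla\vnd$ in $L^p$ (or an Egorov/truncation argument) rather than following from the stated convergences alone.
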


\begin{proof}
Let $\beta\in\re$ and $t\in[0,T]$ be arbitrary. Applying Itô's formula and taking the expectation we obtain from Lemma \ref{lem_appr_Equ}
\begin{align}\label{eq_Ito_vnd}
\begin{aligned}
    &\frac{1}{2}\erwws{\|\vnd(t)\|_2^2}-\halbe\erwws{\|v_0\|_2^2}+\erwws{\int_0^t\int_D a(x,\vnd(s),\nabla\vnd(s))\,dx\,ds}\\
    &+\frac{1}{n} \erwws{\int_0^t j(\vnd(s),\vnd(s))\,ds}+\erwws{\int_0^t\int_D f(\vnd(s))\vnd(s)\,dx\,ds}\\
    =\,&\halbe\erwws{\int_0^t\|B_n(s,\vnd(s))\|_{\HS}^2\,ds}.
\end{aligned}
\end{align}
On the other hand, we obtain from Proposition~\ref{lem_limit_Equ_without_identification} by applying Itô's formula and taking the expectation
\begin{align}\label{eq_Ito_uinfty}
\begin{aligned}
    &\halbe\erwws{\|u_\infty(t)\|_2^2}-\halbe\erwws{\|v_0\|_2^2}+\erwws{\int_0^t\int_D G(x,s)\cdot\nabla u_\infty(s)\,dx\,ds}\\
    &+\erwws{\int_0^t\int_D f(u_\infty(s))u_\infty(s)\,dx\,ds}=\halbe\erwws{\int_0^t\|B(s,u_\infty(s))\|_{\HS}^2\,ds}.
\end{aligned}
\end{align}
Taking the difference \eqref{eq_Ito_vnd}-\eqref{eq_Ito_uinfty} we get
\begin{align*}
    &\halbe\erwws{\|\vnd(t)\|_2^2}-\halbe\erwws{\|u_\infty(t)\|_2^2}\\
    &+\erwws{\int_0^t\int_D a(x,\vnd(s),\nabla\vnd(s))\cdot\nabla \vnd(s)-G(x,s)\cdot\nabla u_\infty(s)\,dx\,ds}\\
    &+\erwws{\int_0^t\int_D f(\vnd(s))\vnd(s)-f(u_\infty(s))u_\infty(s)\,dx\,ds}\\
    &\leq\erwws{\int_0^t\|B_n(s,\vnd(s))\|_{\HS}^2-\|B(s,u_\infty(s))\|_{\HS}^2\,ds}.
\end{align*}
Using Lemma \ref{lem_conv_new_probabilityspace} $i)$, the fact that $f\in L^\infty(\re)$, and \eqref{conv_Bn(vnd)_to_B(uinfty)}, we obtain
\begin{align*}
    \limsup_{n\rightarrow\infty}\bigg(&\erwws{\|\vnd(t)\|_2^2}-\erwws{\|u_\infty(t)\|_2^2}\\
    &+\erwws{\int_0^t\int_D a(x,\vnd(s),\nabla\vnd(s))\cdot\nabla\vnd(s)-G(x,s)\cdot\nabla u_\infty(s)\,dx\,ds} \bigg)\\
    \leq 0.
\end{align*}
Therefore, using the lower semicontinuity of the norm, there holds
\begin{align*}
    &\limsup_{n\rightarrow\infty}\erwws{\int_0^t\int_D a(x,\vnd(s),\nabla \vnd(s))\cdot\nabla\vnd(s)\,dx\,ds}\\
    &\leq \erwws{\int_0^t\int_D G(x,s)\cdot\nabla u_\infty(s)\,dx\,ds}.
\end{align*}
Applying a stochastic version of Minty's trick (see \cite[Lemma 8.8]{R13}) provides\linebreak $G=a(\cdot,u_\infty,\nabla u_\infty)$.
\end{proof}

\section{Pathwise uniqueness of solutions}\label{Sec_Uniqueness}
In this section, we prove Theorem \ref{maintheorem_uniqueness}.

\begin{proof}
Let $\eps>0$ and $\eta_\eps$ be a nondecreasing, Lipschitz continuous approximation of the sign-function defined by $\eta_\eps(r):=2\int_0^r\frac{1}{\eps}\rho(\frac{s}{\eps})\,ds$ for $r\in\re$, where $\rho(s):=c\operatorname{exp}\left(\frac{1}{s^2-1}\right)\mathds{1}_{\{|s|\leq 1\}}$ such that $\int_\re\rho(s)\,ds=1$ is a classical mollifier approximation of the Dirac measure with support on $[-\eps,\eps]$ (see \cite[p.195]{VZ19}). Define for $r\in\re, u\in L^2(D)$
\begin{align*}
N_\eps(r):=\int_0^r\eta_\eps(s)\,ds\quad\text{and}\quad F_\eps(u):=\int_D N_\eps(u(x))\,dx.
\end{align*}
Note that one can show by easy calculation $|N_\eps''(r)|\leq\frac{2c}{\eps}\mathds{1}_{\{|r|\leq\eps\}}$.
Because $u_1$ and $u_2$ are both solutions to \eqref{spde} we have for any $\tiT$
\begin{align*}
&u_1(t)-u_2(t)-(u_0^1-u_0^2)-\int_0^t\diver(a(\cdot,u_1(s),\nabla u_1(s))-a(\cdot,u_2(s),\nabla u_2(s)))\,ds\\
&+\int_0^t f(u_1(s))-f(u_2(s))\,ds = \int_0^t B(s,u_1(s))-B(s,u_2(s))\,dW_s.
\end{align*}
Applying Itô's fomula to this stochastic process by using $F_\eps$ (see \cite[p.78]{P21}) provides for any $t\in[0,T]$ after taking the expectation
\begin{align}\label{eq_uniq_Fdelta}
\begin{aligned}
&\erww{F_\eps(u_1(t)-u_2(t))}-\erww{F_\eps(u_0^1-u_0^2)}\\
&-\erww{\int_0^t\langle\diver\left(a(\cdot,u_1(s),\nabla u_1(s))-a(\cdot,u_2(s),\nabla u_2(s))\right),N_\eps'(u_1(s)-u_2(s))\rangle_{p',p}\,ds}\\
&+\erww{\int_0^t\int_D(f(u_1(s))-f(u_2(s)))N_\eps'(u_1(s)-u_2(s))\,dx\,ds}\\
&=\halbe\mathbb{E}\bigg[\int_0^t\operatorname{Tr}\Big[F_\eps''(u_1-u_2)(s))(B(s,u_1(s))-B(s,u_2(s)))Q\\
&\qquad(B(s,u_1(s))-B(s,u_2(s)))^*\Big]\,ds\bigg]\\
&\Leftrightarrow I_1+I_2+I_3+I_4=I_5.
\end{aligned}
\end{align}
By using (A1) we know that for any $\tiT$
\begin{align*}
I_2
&=\erww{\int_0^t\int_D (a(x,u_1,\nabla u_1)-a(x,u_2,\nabla u_2))\nabla(u_1-u_2)N_\eps''(u_1-u_2)\,dx\,ds}\\
&\geq \erww{\int_0^t\int_D(a(x,u_1,\nabla u_2)-a(x,u_2,\nabla u_2))\nabla (u_1-u_2)N_\eps''(u_1-u_2)\,dx\,ds}.
\end{align*}
By using (A3) and Hölder inequality there holds for any $t\in[0,T]$
\begin{align*}
&\left|\erww{\int_0^t\int_D(a(x,u_1,\nabla u_2)-a(x,u_2,\nabla u_2))\nabla (u_1-u_2)N_\eps''(u_1-u_2)\,dx\,ds} \right|\\
&\leq \erww{\int_0^t\int_D(C_5|\nabla u_2|^{p-1}+h(x))|u_1-u_2||\nabla(u_1-u_2)|N_\eps''(u_1-u_2)\,dx\,ds}\\
&\leq 2c\erww{\int_0^t\int_D(C_5|\nabla u_2|^{p-1}+h(x))|\nabla(u_1-u_2)|\mathds{1}_{\{|u_1-u_2|\leq\eps\}}\,dx\,ds}\\
&\leq 2cC_5\left(\erww{\int_0^t\|\nabla u_2\|_p^p\,ds}\right)^{p-1}\left(\erww{\int_0^t\int_D\mathds{1}_{\{|u_1-u_2|\leq\eps\}}|\nabla(u_1-u_2)|^p\,dx\,ds}\right)^{\frac{1}{p}}\\
&\quad+ 2c\|h\|_{p'}\left(\erww{\int_0^t\int_D\mathds{1}_{\{|u_1-u_2|\leq\eps\}}|\nabla(u_1-u_2)|^p\,dx\,ds}\right)^{\frac{1}{p}}\\
&\rightarrow 0\quad\text{for }\eps\downarrow0.
\end{align*}
Hence,
\begin{align}\label{uniq_I2_geq0}
\liminf_{\eps\downarrow0} I_2\geq0.
\end{align}
Since $\eta_\eps$ is an approximation of the sign function, we obtain by the Lipschitz continuity of $f$ for all $\tiT$
\begin{align}\label{uniq_f}
\begin{aligned}
    \lim_{\eps\downarrow0}I_3&=\lim_{\eps\downarrow0}\erww{\int_0^t\int_D(f(u_1)-f(u_2))\eta_\eps(u_1-u_2)\,dx\,ds}\\
    &=\erww{\int_0^t\int_D(f(u_1)-f(u_2))\operatorname{sign}(u_1-u_2)\,dx\,ds}\\
    &\leq\erww{\int_0^t\int_D L_f|u_1-u_2|\,dx\,ds}.
\end{aligned}
\end{align}
Let $(e_k)_{k\in\na}$ be an orthonormal basis of $U$ consisting of eigenvectors of $Q$. By using \cite[Proposition B.0.10]{LR15} there holds for all $\tiT$
\begin{align*}
    I_5&=\halbe\erww{\int_0^t\operatorname{Tr}\left[\left((B(s,u_1)-B(s,u_2))Q^\halbe\right)^*F_\eps''(u_1-u_2)(B(s,u_1)-B(s,u_2))Q^\halbe \right]\,ds}\\
    &=\halbe \mathbb{E}\bigg[\int_0^t\sum_{k\in\na}\langle \left(B(s,u_1)-B(s,u_2))Q^\halbe\right)^*F_\eps''(u_1-u_2)\\
    &\qquad(B(s,u_1)-B(s,u_2))Q^\halbe(e_k),e_k \rangle_U\,ds\bigg]\\
    &=\halbe\mathbb{E}\bigg[\int_0^t\sum_{k\in\na}\langle F_\eps''(u_1-u_2)(B(s,u_1)-B(s,u_2))Q^\halbe(e_k),\\
    &\qquad(B(s,u_1)-B(s,u_2))Q^\halbe(e_k)\rangle_{L^2}\,ds\bigg]\\
    &=\halbe\erww{\int_0^t\int_D N_\eps''(u_1-u_2)\sum_{k\in\na}\left|(B(s,u_1)-B(s,u_2))Q^\halbe(e_k)\right|^2\,dx\,ds}.
\end{align*}
Consequently, for all $\tiT$ there holds
\begin{align*}
    |I_5|&\leq\halbe \erww{\int_0^t\int_DN_\eps''(u_1-u_2)\sum_{k\in\na}\left|\int_D(\sigma(s,u_1(x))-\sigma(s,u_2(x))k(x,y)f_k(y)\,dy\right|^2 dx\,ds}\\
    &\leq \halbe\erww{\int_0^t\int_D N_\eps''(u_1-u_2)L_\sigma^2|u_1-u_2|^{2\alpha}\sum_{k\in\na}\left(\int_D|k(x,y)||Q^\halbe(e_k)(y)|\,dy \right)^2 dx\,ds}\\
    &\leq \frac{L_\sigma^2}{2}\erww{\int_0^t\int_DN_\eps''(u_1-u_2)|u_1-u_2|^{2\alpha}\|k(x,\cdot)\|_2^2\left(\sum_{k\in\na}\|Q^\halbe(e_k)\|_2^2\right)\,dx\,ds}.
\end{align*}
Since $(e_k)_{k\in\na}$ are eigenvectors of $Q$, there exist $(\lambda_k)_{k\in\na}$ such that
\begin{align*}
    \sum_{k\in\na}\|Q^\halbe(e_k)\|_2^2=\sum_{k\in\na}\|\lambda_k e_k\|_2^2=\sum_{k\in\na}|\lambda_k|\leq C
\end{align*}
for a constant $C\geq0$. Therefore, we obtain for any $\tiT$
\begin{align}\label{uniq_lim_I5}
\begin{aligned}
    |I_5|&\leq \frac{CL_\sigma^2}{2}\erww{\int_0^t\int_D N_\eps''(u_1-u_2)|u_1-u_2|^{2\alpha}\|k(x,\cdot)\|_2^2\,dx\,ds}\\
    &\leq \frac{CL_\sigma^2 c}{\eps}\erww{\int_0^t\int_D\mathds{1}_{\{|u_1-u_2|\leq\eps\}}|u_1-u_2|^{2\alpha}\|k(x,\cdot)\|_2^2\,dx\,ds}
\end{aligned}
\end{align}
If $\alpha\in(\halbe,1)$, we can estimate by \eqref{uniq_lim_I5}
\begin{align*}
    |I_5|\leq CL_\sigma^2cT\|k\|_{L^2(D\times D)}\eps^{2\alpha-1}\rightarrow 0\quad\text{for }\eps\downarrow 0.
\end{align*}
For $\alpha=\halbe$ we find by \eqref{uniq_lim_I5}
\begin{align*}
    |I_5|&\leq \frac{CL_\sigma^2 c}{\eps}\erww{\int_0^t\int_D\mathds{1}_{\{|u_1-u_2|\leq\eps\}}\mathds{1}_{\{u_1\neq u_2\}}|u_1-u_2|\|k(x,\cdot)\|_2^2\,dx\,ds}\\
    &\leq CL_\sigma^2 c\erww{\int_0^t\int_D\mathds{1}_{\{|u_1-u_2|\leq\eps\}}\mathds{1}_{\{u_1\neq u_2\}}\|k(x,\cdot)\|_2^2\,dx\,ds}\\
    &\rightarrow 0\text{ for }\eps\downarrow0.
\end{align*}
Since there holds
\begin{align*}
    &\lim_{\eps\downarrow0}\left(\erww{F_\eps(u_1(t)-u_2(t))}-\erww{F_\eps(u_0^1-u_0^2)}\right)\\
    &=\erww{\int_D|u_1(t)-u_2(t)|\,dx}-\erww{\int_D|u_0^1-u_0^2|\,dx}
\end{align*}
we obtain from \eqref{eq_uniq_Fdelta} by using \eqref{uniq_I2_geq0}, \eqref{uniq_f}, and \eqref{uniq_lim_I5}
\begin{align*}
    \erww{\int_D|u_1(t)-u_2(t)|\,dx}\leq \erww{\int_D|u_0^1-u_0^2|\,dx}+L_f\erww{\int_0^t\int_D|u_1(s)-u_2(s)|\,dx\,ds}.
\end{align*}
for all $\tiT$. Using Gronwalls lemma we get
\begin{align*}
    \erww{\int_D|u_1(t)-u_2(t)|\,dx}\leq e^{Lt}\erww{\int_D|u_0^1-u_0^2|\,dx}.
\end{align*}
\end{proof}

\textbf{Acknowledgment:} This work has been supported by the German Research Foundation project ZI 1542/3-1.

\bibliographystyle{abbrv}
\bibliography{Hoelder_Bibliography}

\end{document}